\NeedsTeXFormat{LaTeX2e}
\documentclass[reqno,a4paper,11pt]{amsart}

\usepackage[utf8]{inputenc}
\usepackage{amsmath}
\usepackage{amssymb}
\usepackage{amsthm}
\usepackage{mathtools}

\usepackage{times}
\usepackage{amsaddr}
\usepackage{enumitem}
\usepackage{cite}
\usepackage{tikz}
\usepackage{cleveref}
\usepackage[english]{babel}
\usepackage[T1]{fontenc}
\usepackage{fourier}
\usepackage{fullpage}
\usepackage{caption}

\newcommand{\whp}{whp}
\newcommand{\Whp}{Whp}
\newcommand{\prob}[1]{\mathbb{P}\left[#1\right]} 
\newcommand{\probLarge}[1]{\mathbb{P}\big[#1\big]} 
\newcommand{\condprob}[2]{\mathbb{P}\left[#1 \;\middle|\; #2\right]}
\newcommand{\variance}[1]{\mathbb{V}\left[#1\right]}
\newcommand{\expec}[1]{\mathbb{E}\left[#1\right]} 

\def\Erdos{Erd\H{o}s}
\def\Renyi{R\'enyi}
\def\Luczak{\L{}uczak}
\def\ER{\Erdos-\Renyi}
\def\Bollobas{Bollob\'{a}s}

\newtheorem{thm}{Theorem}[section]
\newtheorem{prop}[thm]{Proposition}
\newtheorem{coro}[thm]{Corollary}
\newtheorem{lem}[thm]{Lemma}

\newtheorem{question}[thm]{Question}
\theoremstyle{remark}
\newtheorem{remark}[thm]{Remark}
\theoremstyle{definition}
\newtheorem{definition}[thm]{Definition}
\newcommand{\proofof}[1]{\subsection{Proof of \Cref{#1}}}

\crefname{thm}{theorem}{theorems}
\crefname{prop}{proposition}{propositions}
\crefname{coro}{corollary}{corollaries}
\crefname{lem}{lemma}{lemmas}
\crefname{definition}{definition}{definitions}
\crefname{question}{question}{questions}

\overfullrule=3mm

\newtheoremstyle{claim}
{}
{}
{\itshape}
{}
{\bf}
{.}
{.5em}
{}
\theoremstyle{claim}

\crefname{claim}{claim}{claims}

\setlength{\footskip}{25pt}

\def\N{\mathbb{N}} 
\def\R{\mathbb{R}} 
\def\ur{\in_R} 
\newcommand{\rounddown}[1]{\left\lfloor#1\right\rfloor} 

\def\twoconcentration{D}


\def\bin{\mathcal{B}}
\def\ball{B}
\def\nbins{n} 
\def\nballs{k} 
\def\location{\mathbf{A}} 
\def\locationBit{A} 
\def\loadvector{\mathbf{\lambda}} 
\def\load{\lambda} 
\def\maxload{\lambda^\ast} 
\DeclareMathOperator{\BB}{BB}
\DeclareMathOperator{\MBB}{M}
\newcommand{\binsandballs}[2]{\BB\left(#1, #2\right)}
\newcommand{\maxbinsandballs}[2]{\MBB\left(#1, #2\right)}


\newcommand{\concentration}[2]{\nu\left(#1, #2\right)} 
\newcommand{\specialconcentration}[1]{\nu\left(#1\right)} 
\def\Concentration{\nu}

\def\ntrees{t} 
\def\rootrv{Z} 
 
\def\rootvectorbit{Y}

\newcommand{\smallo}[1]{o\left(#1\right)}
\newcommand{\bigo}[1]{O\left(#1\right)}
\newcommand{\smallomega}[1]{\omega\left(#1\right)}
\newcommand{\Th}[1]{\Theta\left(#1\right)}


\def\planargraph{P} 
\def\planarclass{\mathcal{P}} 

\def\multigraph{M} 

\def\forest{F} 
\def\forestclass{\mathcal{F}} 

\def\nocomplex{U} 
\def\nocomplexclass{\mathcal{U}} 

\newcommand{\maxdegree}[1]{\Delta\left(#1\right)} 
\newcommand{\maxdegreeLarge}[1]{\Delta\big(#1\big)} 
\newcommand{\largestcomponent}[1]{L\left(#1\right)} 
\def\Largestcomponent{L} 
\newcommand{\rest}[1]{R\left(#1\right)} 
\def\Rest{R} 
\def\degreesequence{\mathbf{d}}
\def\ratio{\mu}
\def\niv{k}
\def\nie{l}
\def\md{d}
\newcommand{\degree}[2]{d_{#2}\left(#1\right)} 
\newcommand{\vertexSet}[1]{V\left(#1\right)} 
\newcommand{\edgeSet}[1]{E\left(#1\right)} 
\newcommand{\numberVertices}[1]{v\left({#1}\right)} 
\newcommand{\numberEdges}[1]{e\left({#1}\right)} 
\newcommand{\numberVerticesLarge}[1]{v\big({#1}\big)} 
\newcommand{\numberEdgesLarge}[1]{e\big({#1}\big)} 

\newcommand{\core}[1]{C\left(#1\right)} 
\newcommand{\complexpart}[1]{Q\left(#1\right)} 
\newcommand{\restcomplex}[1]{U\left(#1\right)} 
\newcommand{\restcomplexLarge}[1]{U\big(#1\big)}
\def\Restcomplex{U} 
\def\Complexlargestcore{Q_L} 
\newcommand{\complexlargestcore}[1]{Q_L\left(#1\right)}
\newcommand{\complexlargestcoreLarge}[1]{Q_L\big(#1\big)}
\def\Complexrestcore{Q_S} 
\newcommand{\complexrestcore}[1]{Q_S\left(#1\right)}
\newcommand{\complexrestcoreLarge}[1]{Q_S\big(#1\big)}
\def\complexclass{\mathcal{Q}}
\def\complexgraph{Q}
\def\funcL{\ell}
\def\funcR{r}

\def\cl{\mathcal{A}} 
\def\func{\Phi} 
\def\seq{\mathbf{a}} 
\def\term{a} 
\newcommand{\condGraph}[2]{#1 \mid #2} 
\def\randomGraph{A}
\def\property{\mathcal{R}}

\newcommand{\contiguous}[2]{#1 \triangleleft #2} 

\def\prueferseqence{\psi} 
\def\prueferinvers{\psi^{-1}} 
\newcommand{\sequences}[2]{\mathcal{S}\left(#1, #2\right)} 
\newcommand{\frequency}[2]{\#\left(#1, #2\right)} 

\newcommand{\setbuilder}[2]{\left\{#1 \mid #2\right\}} 

\newcommand{\lessorequal}{\hspace{0.06cm}\leq\hspace{0.06cm}}
\newcommand{\greaterorequal}{\hspace{0.06cm}\geq\hspace{0.06cm}}
\newcommand{\equal}{\hspace{0.06cm}=\hspace{0.06cm}}
\newcommand{\greater}{\hspace{0.06cm}>\hspace{0.06cm}}
\newcommand{\defined}{\hspace{0.06cm}:=\hspace{0.06cm}}

\newcommand{\transformation}[2]{#1\rightarrow #2}

\title{Concentration of maximum degree in random planar graphs}
\author{Mihyun Kang, Michael Missethan}
\address{Institute of Discrete Mathematics, Graz University of Technology, Steyrergasse 30, 8010 Graz, Austria}
\email{\{kang,missethan\}@math.tugraz.at}
\thanks{Supported by Austrian Science Fund (FWF): I3747 and W1230}
\keywords{Random graphs, random planar graphs, maximum degree, balls into bins, Prüfer sequence}

\begin{document}

\begin{abstract}
Let $\planargraph(n,m)$ be a graph chosen uniformly at random from the class of all planar graphs on vertex set $[n]:=\left\{1, \ldots, n\right\}$ with $m=m(n)$ edges. We show that in the sparse regime, when $m/n\leq 1$, with high probability the maximum degree of $\planargraph(n,m)$ takes at most two different values. In contrast, this is not true anymore in the dense regime, when $m/n>1$, where the maximum degree of $\planargraph(n,m)$ is not concentrated on any subset of $[n]$ with bounded size.
\end{abstract}

\maketitle

\section{Introduction and results}\label{sec:intro}

\subsection{Motivation}\label{subsec:motivation}
The \ER\ random graph $G(n,m$), introduced by \Erdos\ and \Renyi\ \cite{erdoes1,erdoes2}, is a graph chosen uniformly at random from the class $\mathcal{G}(n,m)$ of all vertex-labelled graphs on vertex set $[n]:=\left\{1, \ldots, n\right\}$ with $m=m(n)$ edges, denoted by $G(n,m)\ur \mathcal{G}(n,m)$. Since its introduction $G(n,m)$, together with the closely related binomial random graph $G(n,p)$, has been intensively studied (see e.g. \cite{rg1, rg2, rg3}). A remarkable feature of this model is the \lq concentration\rq\ of many graph parameters. That is, with high probability (meaning with probability tending to 1 as $n$ tends to infinity, {\em \whp\ } for short) certain graph parameters in $G(n,m)$ lie in \lq small\rq\ intervals, which only depend on $n$ and $m$.

The graph parameter we will focus on in this paper is the maximum degree of a graph $H$, denoted by $\maxdegree{H}$. \Erdos\ and \Renyi\ \cite{erdoes1} were the first to consider $\maxdegree{G(n,m)}$ and since then, many results on $\maxdegree{G(n,m)}$ and, more generally, the degree sequence of $G(n,m)$ were obtained (see e.g. \cite{max_degree1,max_degree2,max_degree3,degree_sequence1,degree_sequence2,degree_sequence3,vertices_given_degree}). A particularly interesting result by \Bollobas\ \cite{vertices_given_degree} is that $m\sim n\log n$ is a \lq threshold\rq\ for the concentration of $\maxdegree{G(n,m)}$. More formally, \whp\ $\maxdegree{G(n,m)}$ takes one of two values when $m=\smallo{n \log n}$, while $\maxdegree{G(n,m)}$ is not concentrated on any subset of $[n]$ with bounded size when $m=\smallomega{n \log n}$ and $m=\smallo{n^2}$.

\begin{thm}[\hspace{1sp}{\cite{vertices_given_degree}}]\label{thm:max_degree_ergraph}
Let $m=m(n)=\smallo{n\log n}$ and $G=G(n,m)\ur \mathcal G(n,m)$. Then there exists a $\twoconcentration=\twoconcentration(n)\in \N$ such that \whp\ $\maxdegree{G}\in\left\{\twoconcentration, \twoconcentration+1\right\}$.
\end{thm}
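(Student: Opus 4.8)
The plan is to reduce the statement to a balls-into-bins problem, which is hinted at by the notation set up in the preamble (the macros $\binsandballs{\cdot}{\cdot}$ and $\maxbinsandballs{\cdot}{\cdot}$). Think of the $n$ vertices of $G(n,m)$ as $n$ bins and the $m$ edges as contributing balls to the bins. More precisely, one natural coupling is to fix the degree sequence and argue that, conditioned on it being ``typical'', the relevant behaviour of $\maxdegree{G}$ mirrors that of a process in which $2m$ balls are thrown into $n$ bins; alternatively one computes directly, for each $d$, the probability that a fixed vertex has degree $\geq d$ and shows this probability is, up to lower-order factors, $\binom{n-1}{d}(m/\binom n2)^d$-like, i.e. governed by a Poisson/binomial tail. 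First I would therefore establish a precise first-moment estimate: let $X_{\geq d}$ be the number of vertices of degree at least $d$ in $G(n,m)$, compute $\expec{X_{\geq d}}$ asymptotically (using that $m=\smallo{n\log n}$, so the typical degree is $2m/n=\smallo{\log n}$ and the interesting range of $d$ is around where $\expec{X_{\geq d}}$ transitions from large to small), and identify the threshold value.

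The heart of the argument is then a two-point concentration statement, and the mechanism is the same as in the classical $G(n,p)$ proof of Bollob\'as. Define $\twoconcentration=\twoconcentration(n)$ to be the largest integer $d$ with $\expec{X_{\geq d}}\to\infty$ (or with $\expec{X_{\geq d}}\geq$ some slowly growing function, chosen so that the first-moment gap between consecutive values of $d$ is large). The key quantitative input is that the ratio $\expec{X_{\geq d+1}}/\expec{X_{\geq d}}$ is $\smallo 1$ in this range — intuitively, raising the required degree by one multiplies the tail probability by roughly $(2m/n)/d = \smallo 1$ — so that $\expec{X_{\geq \twoconcentration+2}}\to 0$ while $\expec{X_{\geq\twoconcentration}}\to\infty$. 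Then: (i) by Markov's inequality, \whp\ $\maxdegree{G}\leq \twoconcentration+1$; (ii) by a second-moment / Chebyshev argument applied to $X_{\geq\twoconcentration}$, \whp\ $X_{\geq\twoconcentration}\geq 1$, hence $\maxdegree{G}\geq\twoconcentration$. Together these give $\maxdegree{G}\in\{\twoconcentration,\twoconcentration+1\}$ \whp. To make step (ii) work one needs $\variance{X_{\geq\twoconcentration}}=\smallo{\expec{X_{\geq\twoconcentration}}^2}$, which amounts to showing the events ``$\deg(u)\geq\twoconcentration$'' and ``$\deg(v)\geq\twoconcentration$'' are asymptotically independent for distinct $u,v$; this is where one pays attention to whether the conditioning on having exactly $m$ edges (rather than independent edges as in $G(n,p)$) introduces correlations, but in the regime $m=\smallo{n\log n}$ the edge count is so far below $\binom n2$ that switching between $G(n,m)$ and $G(n,p)$ with $p=m/\binom n2$ is routine, or one computes the joint degree probability directly.

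I expect the main obstacle to be the second-moment computation in $G(n,m)$ rather than in $G(n,p)$: one must control $\condprob{\deg(v)\geq\twoconcentration}{\deg(u)\geq\twoconcentration}$ and show it is $(1+\smallo 1)\prob{\deg(v)\geq\twoconcentration}$, accounting for the edges between $u$ and $v$ being \lq used up\rq\ and for the fixed total number of edges. A clean way around it is to work with a suitable balls-into-bins model $\binsandballs n{2m}$ — where correlations are transparent — prove the two-point concentration there, and then transfer back to $G(n,m)$ via a coupling or via the contiguity-type tools suggested by the $\contiguous{\cdot}{\cdot}$ notation in the preamble. A secondary technical point is choosing $\twoconcentration$ and the cutoff for $\expec{X_{\geq d}}$ so robustly that the \lq transition window\rq\ has width at most one; here the fact that successive tail probabilities differ by a factor $\smallo 1$ does the job, but near the very sparse end ($m=\bigo n$, where $\twoconcentration$ is a constant like $\Th{\log n/\log\log n}$ fails and instead $\maxdegree G$ may itself only be a constant) one has to check the statement still literally holds — it does, since a two-element window is a weak claim, but the first-moment estimate must be done carefully enough to pin down $\twoconcentration$ uniformly across the whole range $m=\smallo{n\log n}$.
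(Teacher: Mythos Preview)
The paper does not give its own proof of this theorem: it is quoted from Bollob\'as and the only remark is that his $G(n,p)$ result transfers to $G(n,m)$ by standard equivalence tools. Your first/second moment outline is exactly Bollob\'as's argument, so in that sense your proposal matches what the paper invokes.

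Where your proposal diverges is in the alternative route you sketch via $\binsandballs{n}{2m}$ and contiguity. The paper does take precisely this route, but only for the subrange $m=\bigo{n}$ (its \Cref{thm:concentration_balls_bins} and \Cref{thm:G_n_m_bins_balls}), because that is all it needs for the planar application. The transfer there is: build a random multigraph $M$ from the location vector, observe that $M$ conditioned on being simple is $G(n,m)$, and compute $\prob{M\text{ simple}}\ge\exp(-2m/n-4m^2/n^2)$, which is bounded away from zero when $m=\bigo{n}$. This gives $\contiguous{\degreesequence}{\loadvector}$ for free. If you try to push this to the full range $m=\smallo{n\log n}$, that simplicity probability tends to zero and the contiguity step as written fails; you would need a sharper conditioning argument (e.g.\ Poisson approximation for the number of loops and multiple edges, or working directly in $G(n,p)$ and transferring at the end). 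So the balls-into-bins shortcut is genuinely cleaner for correlations, as you say, but only in the linear regime; beyond that your direct second-moment computation in $G(n,m)$, or the $G(n,p)$ detour, is the honest route.
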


\begin{thm}[\hspace{1sp}{\cite{vertices_given_degree}}]\label{thm:max_degree_ergraph2}
	Let $m=m(n)=\smallomega{n\log n}$, $m=\smallo{n^2}$, and $G=G(n,m)\ur \mathcal G(n,m)$. If $I=I(n)\subseteq[n]$ is such that \whp\ $\maxdegree{G}\in I$, then $\left|I\right|=\smallomega{1}$.
\end{thm}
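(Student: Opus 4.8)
The plan is to establish the stronger statement that $\max_{k}\prob{\maxdegree{G}=k}=\smallo{1}$; \Cref{thm:max_degree_ergraph2} then follows immediately, since if $I=I(n)$ satisfied $\maxdegree G\in I$ \whp\ while $|I(n)|$ stayed bounded along some subsequence, we would get $\prob{\maxdegree G\in I}\le|I|\cdot\max_k\prob{\maxdegree G=k}=\smallo1$ along that subsequence, contradicting the hypothesis. (This also shows that the shape of $I$ is irrelevant: it need not be an interval.) Write $d:=2m/n$, so $d=\smallomega{\log n}$ and $d=\smallo n$, and $p:=m/\binom n2=\smallo1$; for a fixed vertex $v$ set $p_k:=\prob{\degree vG\ge k}$ and $X_k:=\#\{u\in[n]:\degree uG\ge k\}$, so that $\expec{X_k}=np_k$ and $\{\maxdegree G\ge k\}=\{X_k\ge1\}$.

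The first step is to understand the degree of a single vertex. Since $\degree vG$ is hypergeometric with mean $(1+\smallo1)d$, I would extract from its probability mass function, together with Chernoff-type (moderate-deviation) bounds, the following two facts: (a) the consecutive ratio satisfies $p_{k+1}/p_k=1-(1+\smallo1)(k-d)/d$ whenever $k-d=\bigo{\sqrt{d\log n}}$; and (b) $p_k\ge n^{-2}$ forces $k-d\le(1+\smallo1)\sqrt{2d\log n}$, hence $(k-d)/d=\bigo{\sqrt{\log n/d}}=\smallo1$. It is here that $d=\smallomega{\log n}$, i.e.\ $m=\smallomega{n\log n}$, is crucial; the hypothesis $m=\smallo{n^2}$ enters only through $p=\smallo1$, which keeps the hypergeometric estimates close to their binomial analogues. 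Combining (a) and (b) gives $n\,\prob{\degree vG=k}=n(p_k-p_{k+1})=(1+\smallo1)\,np_k\cdot(k-d)/d$ whenever $p_k\ge n^{-2}$.

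For the main argument, fix the slowly growing truncation $f:=(d/\log n)^{1/4}\to\infty$, and set $k_1:=\min\{k:np_k\le f\}$ and $k_2:=\min\{k:np_k<1/f\}$. If $k\ge k_2$, then Markov's inequality gives $\prob{\maxdegree G=k}\le\prob{X_{k_2}\ge1}\le\expec{X_{k_2}}<1/f=\smallo1$. If $k<k_1$, then $\prob{\maxdegree G=k}\le\prob{X_{k_1}=0}\le\variance{X_{k_1}}/\expec{X_{k_1}}^2$: the diagonal part of $\variance{X_{k_1}}$ is at most $\expec{X_{k_1}}=\Th f$, and — conditioning on the presence or absence of the unique edge shared by a given pair of vertices — the off-diagonal part is $\bigo{\binom n2\,p\,\prob{\degree vG=k_1-1}^2}=\bigo{p\,(np_{k_1})^2\bigl((k_1-d)/d\bigr)^2}=\smallo{1}\cdot\expec{X_{k_1}}^2$ by step one, so $\prob{X_{k_1}=0}=\smallo1$. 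Finally, if $k_1\le k<k_2$, then $np_k\in[1/f,f]$ and $(k-d)/d=\smallo1$, and the union bound together with step one gives
\[
\prob{\maxdegree G=k}\le n\,\prob{\degree vG=k}=(1+\smallo1)\,np_k\cdot\frac{k-d}{d}=\bigo{f\sqrt{\tfrac{\log n}{d}}}=\bigo{(d/\log n)^{-1/4}}=\smallo1 .
\]
Thus $\prob{\maxdegree G=k}=\smallo1$ uniformly in $k$, as claimed.

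I expect the main obstacle to be step one: establishing the tail estimates (a) and (b) with error terms that are uniform over the whole relevant range of $k$ in the hypergeometric model. This is a routine but somewhat delicate moderate-deviation computation — one cannot simply invoke a local central limit theorem, since $d$ may be only barely larger than $\log n$. An alternative is to transfer the monotone events $\{\maxdegree G\ge k\}$ and $\{\maxdegree G<k\}$ to the binomial model $G(n,p)$ with $p=m/\binom n2$ and carry out the computation there. The second-moment estimate for $\prob{X_{k_1}=0}$ is the other point needing care, but the relevant covariances are all governed by the single shared edge, as indicated.
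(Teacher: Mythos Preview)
The paper does not actually prove \Cref{thm:max_degree_ergraph2}: it is quoted as a result of \Bollobas\ \cite{vertices_given_degree}, with the sole remark that \Bollobas\ worked in $G(n,p)$ and that the translation to $G(n,m)$ is standard. So there is no ``paper's own proof'' to compare against beyond that citation.

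Your proposal, by contrast, supplies an actual argument, and the outline is the standard one and is correct. A few comments. First, your reduction to $\max_k\prob{\maxdegree G=k}=\smallo1$ is the right move. Second, the three-range decomposition via $k_1,k_2$ with $f=(d/\log n)^{1/4}$ is clean; the Markov and union-bound ranges go through as you wrote them, and in the second-moment range your covariance bound $p(1-p)\bigl(\prob{\deg v=k_1-1}\bigr)^2$ per pair is exactly what one gets in $G(n,p)$ after conditioning on the shared edge, leading to $\variance{X_{k_1}}/\expec{X_{k_1}}^2=O(1/f)+O(\log n/n)=\smallo1$. Third, your own caveat is the correct one: the only real work is the uniform moderate-deviation control in step one (the ratio estimate (a) and the localisation (b)), and this is indeed easiest to do in $G(n,p)$ and then transfer the monotone events $\{\maxdegree G\ge k\}$ back to $G(n,m)$, which is precisely the route the paper's remark points to. In $G(n,m)$ directly, the degree is hypergeometric and the pairwise covariance picks up an extra (negative) term from the fixed edge count; neither changes the conclusion, but the bookkeeping is tidier in the binomial model.
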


We note that \Bollobas\ \cite{vertices_given_degree} actually considered the binomial random graph $G(n,p)$. But by using standard tools relating $G(n,m)$ and $G(n,p)$ (see e.g. \cite[Section 1.1]{rg1}) one can translate his results as stated in \Cref{thm:max_degree_ergraph,thm:max_degree_ergraph2}.

In recent decades various models of random graphs have been introduced by imposing additional constraints to $G(n,m)$, e.g. degree restrictions or topological constraints. In particular, random planar graphs and related structures, like random graphs on surfaces and random planar maps, have attained considerable attention \cite{chap,dks,surface,mcd,chap2,gim,planar,msw,maxdegree_planar1,max_degree_vertex_model,planar_map1,planar_map2,planar_map3,planar1,planar2,planar3,planar4,planar5,planar6,planar7,planar8,planar9,planar10}. McDiarmid and Reed \cite{maxdegree_planar1} considered the so-called $n$-vertex model for random planar graphs, that is, a graph $\planargraph(n)$ chosen uniformly at random from the class of all vertex-labelled planar graphs on vertex set $[n]$. They proved that \whp\ $\maxdegree{\planargraph(n)}=\Th{\log n}$. Later Drmota, Giménez, Noy, Panagiotou, and Steger \cite{max_degree_vertex_model} used tools from analytic combinatorics and Boltzmann sampling techniques to show that \whp\ $\maxdegree{\planargraph(n)}$ is concentrated in an interval of length $\bigo{\log\log n}$. 

A more natural generalisation of $G(n,m)$ seems to be the random planar graph $\planargraph(n,m)$, which is a graph chosen uniformly at random from the class $\planarclass(n,m)$ of all vertex-labelled planar graphs on vertex set $[n]$ with $m=m(n)$ edges, denoted by $\planargraph(n,m)\ur \planarclass(n,m)$. The random planar graph $\planargraph(n,m)$ has been studied separately for the \lq sparse\rq\ regime where $m\leq n+\smallo{n}$ (see \cite{planar,surface}) and the \lq dense\rq\ regime where $m=\rounddown{\ratio n}$ for a constant $\ratio\in (1,3)$ (see e.g. \cite{gim}). In this paper we show, in the flavour of \Cref{thm:max_degree_ergraph,thm:max_degree_ergraph2}, that in the sparse regime \whp\ $\maxdegree{\planargraph(n,m)}$ takes one of two values (see \Cref{thm:main,thm:main_sub}), while in the dense regime $\maxdegree{\planargraph(n,m)}$ is not concentrated on any subset of $[n]$ with bounded size (see \Cref{thm:main_dense}).

\subsection{Main results}\label{subsec:main}
In order to state our main results, we need the following definition, where we denote by $\log$ the natural logarithm.
\begin{definition}\label{def:nu}
	Let $\Concentration:\N^2\to \R^+$ be a function such that $\concentration{\nbins}{\nballs}$ is the unique positive zero of
	\begin{align*}
	f(x)=f_{\nbins, \nballs}(x)\defined x\log k+x-(x+1/2)\log x-(x-1)\log n.
	\end{align*}
	In case of $\nbins=\nballs$, we write $\specialconcentration{\nbins}:=\concentration{\nbins}{\nbins}$.
\end{definition}
In \Cref{sub:well_definedness} we will prove that the function $\Concentration$ is well defined, i.e. $f$ has a unique positive zero. In \Cref{lem:nu} we will provide some important properties of $\Concentration$ and in \Cref{sec:balls_bins} we motivate the definition of $\Concentration$ in the context of the balls-into-bins model.

We distinguish different cases according to which \lq region\rq\ the edge density falls into. The first regime which we consider is when $m\leq n/2+\bigo{n^{2/3}}$.

\begin{thm}\label{thm:main_sub}
Let $\planargraph=\planargraph(n,m)\ur \planarclass(n,m)$, $m=m(n)\leq n/2+\bigo{n^{2/3}}$, and $\varepsilon>0$. Then we have \whp\ $\rounddown{\concentration{n}{2m}-\varepsilon}\lessorequal \maxdegree{\planargraph}\lessorequal \rounddown{\concentration{n}{2m}+\varepsilon}$. In particular, \whp\ $\maxdegree{P}\in \left\{\twoconcentration, \twoconcentration+1\right\}$, where $\twoconcentration=\twoconcentration(n):=\rounddown{\concentration{n}{2m}-1/3}$.
\end{thm}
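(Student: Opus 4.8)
The feature that makes the range $m\leq n/2+\bigo{n^{2/3}}$ special is that here the \ER\ random graph is planar with probability bounded away from~$0$: for $m\leq n/2-\smallomega{n^{2/3}}$ it is planar \whp, and inside the critical window \whp\ its total surplus is $\bigo{1}$ and, with probability bounded away from~$0$, every component has surplus at most $2$ and is therefore planar. So fix a constant $c>0$ with $\prob{G(n,m)\in\planarclass(n,m)}\geq c$ for all large $n$. Since $\planargraph(n,m)$ is distributed as $G=G(n,m)\ur\mathcal{G}(n,m)$ conditioned on being planar, for every graph property $\property$,
\begin{align*}
\prob{\planargraph(n,m)\in\property}=\frac{\prob{G\in\property\cap\planarclass(n,m)}}{\prob{G\in\planarclass(n,m)}}\leq\frac{1}{c}\,\prob{G\in\property},
\end{align*}
so the plan is to reduce the theorem to the corresponding statement for $G$: it suffices to show that for every fixed $\varepsilon>0$, \whp\ $\rounddown{\concentration{n}{2m}-\varepsilon}\leq\maxdegree{G}\leq\rounddown{\concentration{n}{2m}+\varepsilon}$. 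The last assertion of the theorem then follows from the case $\varepsilon=1/3$, since $[\concentration{n}{2m}-1/3,\concentration{n}{2m}+1/3]$ has length $2/3<1$, hence contains at most one integer, so $\rounddown{\concentration{n}{2m}+1/3}\leq\twoconcentration+1$ with $\twoconcentration=\rounddown{\concentration{n}{2m}-1/3}$.

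For the reduced statement on $G$ I would argue by the first and second moment method (this is a refinement of \Cref{thm:max_degree_ergraph} that pins the concentration interval down to $[\rounddown{\concentration{n}{2m}-\varepsilon},\rounddown{\concentration{n}{2m}+\varepsilon}]$; alternatively one can couple the degree sequence of $G$ with the load vector of $\binsandballs{n}{2m}$, up to lower-order corrections from loops and multiple edges, and quote the concentration of $\maxbinsandballs{n}{2m}$ about $\concentration{n}{2m}$ from \Cref{sec:balls_bins}). Let $X_d$ be the number of vertices of $G$ of degree at least $d$. The degree of a fixed vertex has a hypergeometric law with mean $2m/n=\bigo{1}$, well approximated for $d=\smallo{n}$ by the corresponding Poisson (or binomial) law, and a Stirling estimate gives $\expec{X_d}=\exp(f_{n,2m}(d)+\bigo{1})$ uniformly in the relevant range $1\leq d=\bigo{\log n}$, with $f_{n,2m}$ as in \Cref{def:nu}. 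For the upper bound take $d^{+}=\rounddown{\concentration{n}{2m}+\varepsilon}+1\geq\concentration{n}{2m}+\varepsilon$; by the monotonicity and asymptotics of $f_{n,2m}$ and $\Concentration$ in \Cref{lem:nu}, $f_{n,2m}(d^{+})\leq f_{n,2m}(\concentration{n}{2m}+\varepsilon)\to-\infty$, so $\expec{X_{d^{+}}}\to0$ and Markov's inequality gives \whp\ $\maxdegree{G}\leq\rounddown{\concentration{n}{2m}+\varepsilon}$. For the lower bound take $d^{-}=\rounddown{\concentration{n}{2m}-\varepsilon}$; if $d^{-}\leq0$ there is nothing to prove, and otherwise $\concentration{n}{2m}\geq1+\varepsilon$ and \Cref{lem:nu} gives $f_{n,2m}(d^{-})\geq f_{n,2m}(\concentration{n}{2m}-\varepsilon)\to+\infty$, so $\expec{X_{d^{-}}}\to\infty$. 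Since for $u\neq v$ the events $\{\degree{u}{G}\geq d^{-}\}$ and $\{\degree{v}{G}\geq d^{-}\}$ are nearly independent and the sum of their covariances is $\smallo{\expec{X_{d^{-}}}^{2}}$, Chebyshev's inequality gives \whp\ $X_{d^{-}}\geq1$, i.e.\ $\maxdegree{G}\geq\rounddown{\concentration{n}{2m}-\varepsilon}$. This establishes the reduced statement, and hence the theorem.

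The bulk of the work is in making everything uniform over the whole range $m\leq n/2+\bigo{n^{2/3}}$, which runs from $m=\bigo{1}$, where $\concentration{n}{2m}\to1$ and both inequalities are essentially vacuous, up to the critical window $m=n/2+\Th{n^{2/3}}$, where $\concentration{n}{2m}=\Th{\log n/\log\log n}$: the limits $f_{n,2m}(\concentration{n}{2m}\pm\varepsilon)\to\mp\infty$ and the covariance bound have to hold uniformly there (the second moment argument is needed only in the case $d^{-}\geq1$, which already forces $m$ to be polynomially large, and it is this that makes the variance estimate go through). Finally, the fact that $\prob{G\in\planarclass(n,m)}$ stays bounded away from~$0$ right up to and inside the critical window must be used at full strength; it fails once $m=n/2+\smallomega{n^{2/3}}$, which is exactly why the denser part of the sparse regime, treated in \Cref{thm:main}, requires a genuinely different, planarity-specific argument rather than a reduction to $G(n,m)$.
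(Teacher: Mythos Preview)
Your proposal is correct and follows essentially the same route as the paper: reduce to $G(n,m)$ via the fact that $\prob{G(n,m)\text{ is planar}}$ is bounded away from $0$ in this regime (the paper cites Britikov, \Cref{thm:non_complex}), then invoke the two-point concentration of $\maxdegree{G(n,m)}$ around $\concentration{n}{2m}$, and finally take $\varepsilon=1/3$. The only cosmetic difference is that the paper packages the $G(n,m)$ step as a separate result (\Cref{thm:G_n_m_bins_balls}\ref{thm:G_n_m_bins_balls2}) proved by passing through the random multigraph built from $\binsandballs{n}{2m}$ and quoting \Cref{thm:concentration_balls_bins}, whereas your primary argument runs the first/second moment method directly on vertex degrees in $G(n,m)$; you yourself mention the balls-into-bins route as an alternative, so the two write-ups are interchangeable.
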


Next, we consider the case where $m\geq n/2+\smallomega{n^{2/3}}$ is such that $m\leq n+n^{1-\delta}$ for some $\delta>0$. Kang and \Luczak\ \cite{planar} and Kang, Moßhammer, and Sprüssel \cite{surface} showed that, in contrast to the case when $m\leq n/2+\bigo{n^{2/3}}$, in this regime \whp\ the largest component of $\planargraph=\planargraph(n,m)$ contains significantly more vertices than the second largest component. Therefore, we provide a concentration result on the maximum degree not only for $\planargraph$, but also for the largest component $\largestcomponent{\planargraph}$ of $\planargraph$ and the \lq rest\rq\ $\rest{\planargraph}:=\planargraph\setminus\largestcomponent{\planargraph}$. We will see that $\maxdegree{\largestcomponent{\planargraph}}$ and $\maxdegree{\rest{\planargraph}}$ are strongly concentrated around $\specialconcentration{\funcL}$ and $\specialconcentration{\funcR}$ for suitable $\funcL=\funcL(n)$ and $\funcR=\funcR(n)$, respectively. 

\begin{definition}\label{def:functionLR}
Let $m=m(n)\geq n/2+\smallomega{n^{2/3}}$ be such that $m\leq n+n^{1-\delta}$ for some $\delta>0$. Then we define $\funcL=\funcL(n)$ and $\funcR=\funcR(n)$ as follows.	
\begin{center}
	\def\arraystretch{1.25}
	\begin{tabular}{l|c c }
	 \multicolumn{1}{c|}{ranges of $m$}	& $~~\funcL~~$ & $\funcR$
		\\
		\hline
		(a)~ $m=n/2+s$ for $s=s(n)>0$ such that $s=\smallo{n}$ and $s^3n^{-2}\to \infty$ & $s$ & $n$ \\
		(b)~ $m=d n/2$ where $d=d(n)$ tends to a constant in $\left(1,2\right)$ & $n$ & $n$ \\
		(c)~ $m=n+t$ for $t=t(n)<0$ such that $t=\smallo{n}$ and $t^5n^{-3}\to -\infty$ & $n$ & $\left|t\right|$ \\
		(d)~ $m=n+t$ for $t=t(n)$ such that $t^5n^{-3}$ tends to a constant in $\R$ & $n$ & $n^{3/5}$ \\
		(e)~ $m=n+t$ for $t=t(n)>0$ such that $t=\smallo{n}$ and $t^5n^{-3}\to \infty$ & $n$ & $n^{3/2}t^{-3/2}$
	\end{tabular}
\end{center}
\end{definition}

We note that $\funcL$ and $\funcR$ are chosen such that \whp\ the number of vertices in $\largestcomponent{\planargraph}$ and $\rest{\planargraph}$ are $\Th{\funcL}$ and $\Th{\funcR}$, respectively, according to results in \cite{surface,planar}. Throughout the paper, we will assume that if $m=m(n)\geq n/2+\smallomega{n^{2/3}}$ is such that $m\leq n+n^{1-\delta}$ for some $\delta>0$, then $m=m(n)$ lies in one of the five regimes considered in \Cref{def:functionLR}, which is due to the critical phenomena observed in random planar graphs. Our next result states that in all these cases $\maxdegree{\planargraph}$, $\maxdegree{\largestcomponent{\planargraph}}$, and $\maxdegree{\rest{\planargraph}}$ are strongly concentrated. 

\begin{thm}\label{thm:main}
Let $\planargraph=\planargraph(n,m)\ur \planarclass(n,m)$, $\Largestcomponent=\largestcomponent{\planargraph}$ be the largest component of $\planargraph$, and $\Rest=\planargraph\setminus\Largestcomponent$. Assume $m=m(n)\geq n/2+\smallomega{n^{2/3}}$ is such that $m\leq n+n^{1-\delta}$ for some $\delta>0$. Let $\funcL=\funcL(n)$ and $\funcR=\funcR(n)$ be as in \Cref{def:functionLR} and $\varepsilon>0$. Then \whp
\begin{enumerate}
	\item\label{thm:main1} $\rounddown{\specialconcentration{\funcL}-\varepsilon}+1\lessorequal \maxdegree{\Largestcomponent}\lessorequal \rounddown{\specialconcentration{\funcL}+\varepsilon}+1$;
	\item\label{thm:main2} $\rounddown{\specialconcentration{\funcR}-\varepsilon}\lessorequal \maxdegree{\Rest}\lessorequal \rounddown{\specialconcentration{\funcR}+\varepsilon}$.
\end{enumerate}
In particular, \whp\ $\maxdegree{P}\in \left\{\twoconcentration, \twoconcentration+1\right\}$, where $\twoconcentration=\twoconcentration(n):=\max\left\{\rounddown{\specialconcentration{\funcL}+2/3}, \rounddown{\specialconcentration{\funcR}-1/3}\right\}$.
\end{thm}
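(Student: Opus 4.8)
Because $\maxdegree{\planargraph}=\max\{\maxdegree{\Largestcomponent},\maxdegree{\Rest}\}$, the concluding ``in particular'' assertion is a purely arithmetic consequence of parts~\ref{thm:main1} and~\ref{thm:main2}: applying them with $\varepsilon=1/3$ and using the identity $\rounddown{x-1/3}+1=\rounddown{x+2/3}$ and the monotonicity of $\rounddown{\cdot}$, one gets \whp\ $\maxdegree{\Largestcomponent}\in\{\twoconcentration_1,\twoconcentration_1+1\}$ with $\twoconcentration_1:=\rounddown{\specialconcentration{\funcL}+2/3}$ and $\maxdegree{\Rest}\in\{\twoconcentration_2,\twoconcentration_2+1\}$ with $\twoconcentration_2:=\rounddown{\specialconcentration{\funcR}-1/3}$; taking the maximum yields $\maxdegree{\planargraph}\in\{\twoconcentration,\twoconcentration+1\}$ with $\twoconcentration=\max\{\twoconcentration_1,\twoconcentration_2\}$. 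So it remains to prove \ref{thm:main1} and \ref{thm:main2}, and for both we use the same scheme: reduce the maximum degree to a balls-into-bins experiment and apply the concentration of $\maxbinsandballs{\cdot}{\cdot}$ around $\Concentration$ from \Cref{sec:balls_bins}.

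The reduction uses the structure of $\planargraph$ in each of the regimes~(a)--(e) of \Cref{def:functionLR} established in \cite{planar,surface}. \Whp\ the $2$-core $\core{\planargraph}$ --- the union of the $2$-cores of the complex and unicyclic components of $\planargraph$ --- is negligibly small: it meets $\Largestcomponent$ in $\smallo{\funcL}$ vertices and $\Rest$ in $\smallo{\funcR}$ vertices, and, being planar, has excess at most twice its order. Conditioned on $\core{\planargraph}$, the graph $\planargraph$ is obtained from $\core{\planargraph}$ by attaching a uniformly random forest rooted at $\vertexSet{\core{\planargraph}}$ that spans all the remaining vertices; hence $\Largestcomponent$ is the part of $\core{\planargraph}$ inside it together with a uniformly random pendant forest on its remaining $\Th{\funcL}$ vertices, and $\Rest$ is the union of its unicyclic cores with a uniformly random pendant forest on its remaining $\Th{\funcR}$ vertices (the pure tree components being part of this forest). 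The degree sequence of a uniformly random rooted forest on $N$ vertices is, via the generalised Pr\"ufer correspondence $\prueferseqence\in\sequences{N}{\cdot}$, that of a balls-into-bins experiment whose bins are the $N$ vertices and whose balls are the edge-endpoints of the forest --- the degree of $v$ equals $\frequency{v}{\prueferseqence}$ up to an additive constant (depending on whether $v$ is a leaf or a root), and $\prueferseqence$ is a uniformly random word. Since $\Concentration$ is slowly varying by \Cref{lem:nu}, so that $\specialconcentration{N}=\specialconcentration{\funcL}+\smallo1$ whenever $N=\Th{\funcL}$ (and likewise for $\funcR$), the concentration of $\maxbinsandballs{\cdot}{\cdot}$ then yields \whp\ $\maxdegree{\Largestcomponent}=\specialconcentration{\funcL}+1+\smallo1$ and $\maxdegree{\Rest}=\specialconcentration{\funcR}+\smallo1$, which are \ref{thm:main1} and \ref{thm:main2}.

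The additive $+1$ in \ref{thm:main1} is exactly the difference between the two experiments. Being connected and \whp\ having only $\numberVertices{\Largestcomponent}+\smallo{\numberVertices{\Largestcomponent}}$ edges, $\Largestcomponent$ contributes about $2\numberVertices{\Largestcomponent}$ edge-endpoints to $\numberVertices{\Largestcomponent}$ bins \emph{conditioned on every bin being non-empty}; pre-filling each bin with one ball and throwing the remaining $\approx\numberVertices{\Largestcomponent}$ freely gives $\maxdegree{\Largestcomponent}\approx 1+\maxbinsandballs{\funcL}{\funcL}\approx 1+\specialconcentration{\funcL}$. By contrast $\Rest$ \whp\ has edge density $1/2+\smallo1$ and $\Th{\funcR}$ isolated vertices, so it is an unconstrained experiment with $\approx\funcR$ balls in $\approx\funcR$ bins, giving $\maxdegree{\Rest}\approx\maxbinsandballs{\funcR}{\funcR}\approx\specialconcentration{\funcR}$. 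In each case the lower bound exhibits a bin of load close to $\Concentration$, and the upper bound is a union bound over the $\Th{\funcL}$, resp.\ $\Th{\funcR}$, bins using the tail estimate built into \Cref{def:nu}. Finally one checks that $\core{\planargraph}$ cannot beat this: one of its vertices has degree equal to its degree in $\core{\planargraph}$ plus a Poisson-type pendant contribution, and since planar cores have bounded excess-to-order ratio this sum has bounded mean there, so a union bound over the $\smallo{\funcL}$, resp.\ $\smallo{\funcR}$, core vertices with a Poisson tail shows \whp\ none of them reaches the forest maximum; hence the maximum is attained strictly inside the pendant forest, where the clean $+1$, resp.\ $+0$, identity applies.

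The main obstacle is \emph{precision}. As recorded in \Cref{lem:nu}, changing either argument of $\Concentration$ by a constant factor changes its value by $\smallomega1$, so the whole argument must be carried out with additive error $\smallo1$ in $\Concentration$ rather than up to constant factors --- this is exactly what makes the two-point concentration possible. Accordingly one must extract from \cite{planar,surface} not just the orders of magnitude of the constituents of $\planargraph$ but enough about their conditional distributions to see that the forest pieces are genuinely uniform, that $\Rest$ has edge density $1/2+\smallo1$, and that $\core{\planargraph}$ is small enough --- of order $\smallo{\funcL}$, resp.\ $\smallo{\funcR}$, within $\Largestcomponent$, resp.\ $\Rest$, with excess $\smallo{\funcL(\log\log\funcL)^2/\log\funcL}$ --- that deleting it shifts no degree by more than $\bigo1$ and at only $\smallo{\funcL}$, resp.\ $\smallo{\funcR}$, vertices; and one must use \Cref{lem:nu} repeatedly to absorb the residual fluctuations in the numbers of vertices and edges of the forest pieces and the additive constants from the precise Pr\"ufer bijection for forests with several components. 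The regimes~(a)--(e) are treated in parallel, differing only in which structural result of \cite{planar,surface} supplies the input.
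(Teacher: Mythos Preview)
Your overall architecture matches the paper's: decompose $\planargraph$, handle the largest component via a random forest attached to the core and Pr\"ufer sequences (yielding the $+1$), handle the rest via an unconditioned balls-into-bins experiment with roughly $\funcR$ balls in $\funcR$ bins, and use the stability properties of $\Concentration$ (\Cref{lem:nu}) to absorb the fluctuations in vertex and edge counts. The ``in particular'' deduction from \ref{thm:main1}--\ref{thm:main2} with $\varepsilon=1/3$ is exactly what the paper does.

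There is, however, one genuine gap. Your argument that core vertices cannot attain the maximum degree --- ``planar cores have bounded excess-to-order ratio, this sum has bounded mean there, so a union bound with a Poisson tail\dots'' --- does not go through: bounded \emph{average} degree in the core says nothing about its \emph{maximum} degree, and a planar $2$-core on $\smallo{\funcL}$ vertices can a priori contain a vertex of degree far exceeding $\specialconcentration{\funcL}$. The paper does not derive this from planarity; it takes $\maxdegree{\core{\planargraph}}=\Th{1}$ as a structural input from \cite{planar,surface} (\Cref{thm:internal_structure}\ref{thm:internal_structure1}). This is what, combined with the fact that root degrees in the random forest fall $\smallomega{1}$ below the forest maximum (\Cref{lem:max_load_subset}, \Cref{thm:forest_max_degree}\ref{thm:forest_max_degree2}), gives $\maxdegree{\Complexlargestcore}=\maxdegree{\forest}$ exactly rather than merely up to $\bigo{1}$.

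A smaller discrepancy concerns $\Rest$. The paper does \emph{not} treat $\Rest$ as a pendant forest on a core. It splits $\Rest$ \whp\ into the small complex part $\Complexrestcore$ (with $\bigo{\funcR^{2/3}}$ vertices, whose maximum degree is bounded via \Cref{thm:random_complex_part}\ref{thm:random_complex_part1} and shown to be negligible) and the non-complex part $\Restcomplex$; the latter is handled via contiguity with $G(\tilde n,\tilde m)$ at $\tilde m=\tilde n/2+\bigo{\tilde n^{2/3}}$ using Britikov's theorem (\Cref{thm:non_complex}) and \Cref{thm:G_n_m_bins_balls}, not via Pr\"ufer sequences. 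Your third paragraph's ``unconstrained experiment with $\approx\funcR$ balls in $\approx\funcR$ bins'' is effectively this, but your second paragraph's description of $\Rest$ as ``unicyclic cores with a uniformly random pendant forest (the pure tree components being part of this forest)'' is not the mechanism and is not literally correct: such an object, with free tree components alongside rooted ones, is not a uniform element of any $\forestclass(N,t)$, so the Pr\"ufer bijection does not apply to it directly. The paper sidesteps this by keeping the complex and non-complex parts separate and using the conditional-random-graph framework (\Cref{lem:conditional_random_graphs}) to condition on $\big(\core{\planargraph},\numberVertices{\Complexlargestcore},\numberVertices{\Complexrestcore}\big)$.
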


For example, \Cref{thm:main} says that if $m=n/2+s$ for $s=s(n)>0$ such that $s=\smallo{n}$ and $s^3n^{-2}\to \infty$, known as the weakly supercritical regime, then \whp\ $\rounddown{\specialconcentration{s}-\varepsilon}+1\lessorequal \maxdegree{\Largestcomponent}\lessorequal \rounddown{\specialconcentration{s}+\varepsilon}+1$. In contrast, if $m=d n/2$ where $d=d(n)$ tends to a constant in $\left(1,2\right)$, which is the so-called intermediate regime, then \whp\ $\rounddown{\specialconcentration{n}-\varepsilon}+1\lessorequal \maxdegree{\Largestcomponent}\lessorequal \rounddown{\specialconcentration{n}+\varepsilon}+1$.

Combining \Cref{thm:main,thm:main_sub} we can determine the asymptotic order of $\maxdegree{\planargraph}$ in the sparse regime.
\begin{coro}\label{cor:maxdegree}
	Let $\planargraph=\planargraph(n,m)\ur \planarclass(n,m)$ and assume $m=m(n)$ is such that $\liminf_{n \to \infty} m/n>0$ and $m\leq n+n^{1-\delta}$ for some $\delta>0$. Then \whp
	\begin{align*}
	\maxdegree{\planargraph}=\left(1+\smallo{1}\right)\frac{\log n}{\log \log n}.
	\end{align*}
\end{coro}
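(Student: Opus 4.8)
The plan is to deduce the corollary from \Cref{thm:main_sub,thm:main} together with an asymptotic estimate of the function $\Concentration$. Since $\Rest=\planargraph\setminus\Largestcomponent$, the graph $\planargraph$ is the disjoint union of $\Largestcomponent$ and $\Rest$, so $\maxdegree{\planargraph}=\max\left\{\maxdegree{\Largestcomponent},\maxdegree{\Rest}\right\}$. Under the hypotheses of the corollary, either $m\leq n/2+\bigo{n^{2/3}}$, in which case \Cref{thm:main_sub} applies, or else $m$ lies in one of the five regimes of \Cref{def:functionLR} (with $m\geq n/2+\smallomega{n^{2/3}}$), in which case \Cref{thm:main} applies. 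In the former case, \whp\ $\rounddown{\concentration{n}{2m}-\varepsilon}\leq\maxdegree{\planargraph}\leq\rounddown{\concentration{n}{2m}+\varepsilon}$, and $2m=\Th{n}$ since $\liminf_{n\to\infty}m/n>0$ and $2m\leq n+\bigo{n^{2/3}}$. In the latter case, \whp\ $\maxdegree{\Largestcomponent}=\specialconcentration{\funcL}+\bigo{1}$ and $\maxdegree{\Rest}=\specialconcentration{\funcR}+\bigo{1}$, and inspecting \Cref{def:functionLR} one checks that in every one of the five regimes $\funcL,\funcR\to\infty$, $\funcL\leq n$, $\funcR\leq n$, and $\max\left\{\funcL,\funcR\right\}=n$. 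Thus it suffices to prove that $\concentration{n}{k}=\left(1+\smallo{1}\right)\log n/\log\log n$ whenever $k=\Th{n}$, and that $\specialconcentration{N}\leq\left(1+\smallo{1}\right)\log n/\log\log n$ whenever $N\leq n$ and $N\to\infty$.

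To estimate $\Concentration=\concentration{n}{k}$, recall that $\Concentration$ is the positive zero of $f_{n,k}$, which rearranges to
\[
\left(\Concentration+1/2\right)\log\Concentration=\Concentration\log k+\Concentration-\left(\Concentration-1\right)\log n=\Concentration\left(\log k-\log n\right)+\Concentration+\log n.
\]
When $k=\Th{n}$ we have $\log k-\log n=\bigo{1}$, so the right-hand side equals $\bigo{\Concentration}+\log n$; as this tends to infinity, $\Concentration\to\infty$, hence the left-hand side is $\left(1+\smallo{1}\right)\Concentration\log\Concentration$. This already forces $\Concentration=\smallo{\log n}$ (otherwise $\Concentration\log\Concentration$ would outgrow $\bigo{\Concentration}+\log n$), and feeding this back in gives $\Concentration\log\Concentration=\left(1+\smallo{1}\right)\log n$. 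Taking logarithms yields $\log\Concentration=\left(1+\smallo{1}\right)\log\log n$, and therefore $\Concentration=\left(1+\smallo{1}\right)\log n/\log\log n$. The identical computation with $n$ replaced by $N$ and $k=N$ gives $\specialconcentration{N}=\left(1+\smallo{1}\right)\log N/\log\log N$, which is at most $\left(1+\smallo{1}\right)\log n/\log\log n$ because $x\mapsto\log x/\log\log x$ is eventually increasing and $N\leq n$. (If these asymptotics, or the monotonicity of $\Concentration$, are already recorded in \Cref{lem:nu}, one simply quotes that.)

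Combining the two estimates completes the proof. In the regime of \Cref{thm:main_sub}, \whp\ $\maxdegree{\planargraph}=\concentration{n}{2m}+\bigo{1}=\left(1+\smallo{1}\right)\log n/\log\log n$. In the regime of \Cref{thm:main}, the upper bound $\maxdegree{\planargraph}\leq\max\left\{\specialconcentration{\funcL},\specialconcentration{\funcR}\right\}+\bigo{1}\leq\left(1+\smallo{1}\right)\log n/\log\log n$ uses $\funcL,\funcR\leq n$, and the matching lower bound holds because $\max\left\{\funcL,\funcR\right\}=n$, so one of $\maxdegree{\Largestcomponent},\maxdegree{\Rest}$ equals $\specialconcentration{n}+\bigo{1}=\left(1+\smallo{1}\right)\log n/\log\log n$. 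Since $\log n/\log\log n\to\infty$, every additive $\bigo{1}$ (and the constant $\varepsilon>0$) is absorbed into the factor $1+\smallo{1}$, and we obtain \whp\ $\maxdegree{\planargraph}=\left(1+\smallo{1}\right)\log n/\log\log n$.

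The step I expect to be the main obstacle is the asymptotic analysis of $\Concentration$: because $k=\Th{n}$, the two dominant terms $\Concentration\log k$ and $-\left(\Concentration-1\right)\log n$ of $f_{n,k}$ nearly cancel, so the value of $\Concentration$ is in fact pinned down by the lower-order term $\left(\Concentration+1/2\right)\log\Concentration$. One therefore has to argue somewhat self-referentially that $\Concentration\left(\log k-\log n\right)+\Concentration=\smallo{\log n}$ before concluding $\Concentration\log\Concentration=\left(1+\smallo{1}\right)\log n$ and hence $\Concentration=\left(1+\smallo{1}\right)\log n/\log\log n$. Everything after that is routine bookkeeping with floor functions and the arbitrary $\varepsilon$.
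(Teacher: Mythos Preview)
Your proof is correct and follows essentially the same route as the paper: split into the regime of \Cref{thm:main_sub} and the regime of \Cref{thm:main}, observe that in the latter $\max\{\funcL,\funcR\}=n$ while $\funcL,\funcR\leq n$, and reduce to the asymptotic $\concentration{n}{k}=(1+o(1))\log n/\log\log n$ for $k=\Theta(n)$. The only difference is cosmetic: the paper quotes \Cref{lem:nu}\ref{lem:nu1} and the monotonicity \ref{lem:nu9} directly, whereas you re-derive the asymptotic of $\Concentration$ by hand and bound $\specialconcentration{\funcL},\specialconcentration{\funcR}$ via $\funcL,\funcR\leq n$ and the eventual monotonicity of $x\mapsto\log x/\log\log x$; both arrive at the same place.
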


It is well known that when $m\leq n/2+\bigo{n^{2/3}}$, the probability that $G(n,m)$ is planar is bounded away from 0 (see e.g. \Cref{thm:non_complex} and \cite{birth_giant,luczak_pittel_wierman,prob_planarity}) and therefore, $\planargraph(n,m)$ \lq behaves\rq\ asymptotically like $G(n,m)$. However, this is not the case anymore when $m\geq n/2+\smallomega{n^{2/3}}$, since then \whp\ $G(n,m)$ is not planar (see \cite{luczak_pittel_wierman,prob_planarity}). \Cref{thm:main} reveals the following, perhaps surprising, difference between $\planargraph=\planargraph(n,m)$ and $G=G(n,m)$ in the case that $m=m(n)=d n/2$ where $d=d(n)$ tends to a constant in $\left(1,2\right)$. Roughly speaking, the maximum degrees $\maxdegree{\planargraph}$, $\maxdegree{\largestcomponent{\planargraph}}$, and $\maxdegree{\rest{\planargraph}}$ are {\em independent} of $d=d(n)$. Furthermore, $\maxdegree{\largestcomponent{\planargraph}}$ and $\maxdegree{\rest{\planargraph}}$ typically differ at most by two.

\begin{coro}\label{cor:independent}
Let $\planargraph=\planargraph(n,m)\ur \planarclass(n,m)$, $\Largestcomponent=\largestcomponent{\planargraph}$ be the largest component of $\planargraph$, and $\Rest=\planargraph\setminus\Largestcomponent$. There exists a $\twoconcentration=\twoconcentration(n)$ such that for all $m=m(n)=d n/2$ where $d=d(n)$ tends to a constant in $\left(1,2\right)$, we have \whp\ 
\begin{align*}
\maxdegree{\Largestcomponent}&\in \left\{\twoconcentration, \twoconcentration+1\right\};\\
\maxdegree{\Rest}&\in \left\{\twoconcentration-1, \twoconcentration\right\};\\
\maxdegree{\planargraph}&\in \left\{\twoconcentration, \twoconcentration+1\right\}.
\end{align*}
In particular, \whp\ $\maxdegree{\Rest}\leq\maxdegree{\Largestcomponent}\leq \maxdegree{\Rest}+2$.
\end{coro}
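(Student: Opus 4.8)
The plan is to deduce \Cref{cor:independent} directly from \Cref{thm:main}, specialised to case~(b) of \Cref{def:functionLR}, together with a short calculation involving floor functions. The first step is to note that if $m=m(n)=dn/2$ with $d=d(n)$ tending to a constant in $(1,2)$, then $m$ lies in regime~(b), where $\funcL(n)=\funcR(n)=n$; hence the relevant quantity is $\specialconcentration{n}=\concentration{n}{n}$, which depends only on $n$ and \emph{not} on $d$. One also checks that the hypotheses of \Cref{thm:main} are satisfied for all large $n$: since $m-n/2=(d-1)n/2=\smallomega{n^{2/3}}$ and $d<2$ eventually, we have $n/2+\smallomega{n^{2/3}}\le m\le n\le n+n^{1-\delta}$.

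Next, I would apply \Cref{thm:main} with $\varepsilon=1/3$ and write $a=a(n):=\rounddown{\specialconcentration{n}-1/3}$ and $b=b(n):=\rounddown{\specialconcentration{n}+1/3}$. Since the interval $(\specialconcentration{n}-1/3,\,\specialconcentration{n}+1/3]$ has length $2/3<1$, it contains at most one integer, so $b\in\{a,a+1\}$. The two bounds of \Cref{thm:main} then give, \whp, $a+1\le\maxdegree{\Largestcomponent}\le b+1$ and $a\le\maxdegree{\Rest}\le b$; in particular \whp\ $\maxdegree{\Largestcomponent}\in\{a+1,a+2\}$ and $\maxdegree{\Rest}\in\{a,a+1\}$. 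I would then set $\twoconcentration=\twoconcentration(n):=a(n)+1=\rounddown{\specialconcentration{n}+2/3}$ --- crucially this does not depend on $d$ --- and verify, by distinguishing the cases $b=a$ and $b=a+1$, that \whp\ $\maxdegree{\Largestcomponent}\in\{\twoconcentration,\twoconcentration+1\}$ and $\maxdegree{\Rest}\in\{\twoconcentration-1,\twoconcentration\}$.

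Finally, since $\Rest=\planargraph\setminus\Largestcomponent$ is the disjoint union of the components of $\planargraph$ other than $\Largestcomponent$, we have $\maxdegree{\planargraph}=\max\{\maxdegree{\Largestcomponent},\maxdegree{\Rest}\}$, so the previous step yields \whp\ $\maxdegree{\planargraph}\in\{\twoconcentration,\twoconcentration+1\}$; equivalently one can read this off from the \lq in particular\rq\ clause of \Cref{thm:main}, since in case~(b) one has $\max\{\rounddown{\specialconcentration{n}+2/3},\rounddown{\specialconcentration{n}-1/3}\}=\rounddown{\specialconcentration{n}+2/3}=\twoconcentration$. The inequalities $\maxdegree{\Rest}\le\twoconcentration\le\maxdegree{\Largestcomponent}$ and $\maxdegree{\Largestcomponent}\le\twoconcentration+1\le\maxdegree{\Rest}+2$ then give the last assertion $\maxdegree{\Rest}\le\maxdegree{\Largestcomponent}\le\maxdegree{\Rest}+2$. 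I expect no serious obstacle in this argument: the only delicate point is that a \emph{single} $\twoconcentration(n)$ must work simultaneously for every admissible function $d=d(n)$, which is precisely what the $d$-independence of $\specialconcentration{n}$ provides; everything else is elementary arithmetic with floors and the identity $\maxdegree{\planargraph}=\max\{\maxdegree{\Largestcomponent},\maxdegree{\Rest}\}$.
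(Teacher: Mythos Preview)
Your proposal is correct and takes exactly the same approach as the paper, which simply states that the assertion follows from \Cref{thm:main} together with the fact that in case~(b) of \Cref{def:functionLR} one has $\funcL=\funcR=n$. You have merely spelled out the floor-function arithmetic and the verification of the hypotheses that the paper leaves implicit; your choice $\twoconcentration=\rounddown{\specialconcentration{n}+2/3}$ agrees with the value produced by the \lq in particular\rq\ clause of \Cref{thm:main}.
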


In contrast to \Cref{cor:independent}, $G=G(n,m)$ exhibits a perhaps more intuitive behaviour. If the average degree $d=2m/n$ is growing, then $\maxdegree{G}$ and $\maxdegree{\largestcomponent{G}}$ are increasing, while $\maxdegree{\rest{G}}$ is decreasing. As a consequence, $\maxdegree{\largestcomponent{G}}$ is typically much larger than $\maxdegree{\rest{G}}$. 

\begin{prop}\label{pro:ER}
For $i=1,2$, let $m_i=m_i(n)=d_i n/2$ where $d_i=d_i(n)$ tends to a constant $c_i>1$ and $G_i=G(n,m_i)\ur \mathcal G(n,m_i)$. If $c_1<c_2$ and $G_1$ and $G_2$ are chosen independently from each other, then \whp\ $\maxdegree{G_2}-\maxdegree{G_1}$, $\maxdegree{\largestcomponent{G_2}}-\maxdegree{\largestcomponent{G_1}}$, 	$\maxdegree{\rest{G_1}}-\maxdegree{\rest{G_2}}$, and $\maxdegree{\largestcomponent{G_1}}-\maxdegree{\rest{G_1}}$ are strictly positive and of order $\Th{\log n/\left(\log \log n\right)^2}$.
\end{prop}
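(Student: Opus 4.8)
The plan is to reduce everything to the classical behaviour of $\maxdegree{G(n,m)}$ as described in \Cref{thm:max_degree_ergraph2}, together with the analogous statements for the giant component and its complement. Write $d_i=d_i(n)\to c_i$ with $1<c_1<c_2$. In the supercritical regime $G(n,m_i)$ \whp\ has a unique giant component on $(1+\smallo1)\beta(c_i)n$ vertices for an explicit analytic function $\beta$, and $\rest{G(n,m_i)}=G(n,m_i)\setminus\largestcomponent{G(n,m_i)}$ is \whp\ contiguous to a subcritical random graph $G(n',m')$ with $n'=(1-\beta(c_i)+\smallo1)n$ vertices and subcritical edge density $d_i^\ast<1$ (the \lq duality principle\rq). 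Thus all four graphs $\largestcomponent{G_i}$, $\rest{G_i}$ have vertex sets of linear size $\Theta(n)$ and well-understood edge densities, one supercritical and one subcritical. The first step is therefore to record that, for a random graph on $N=\Theta(n)$ vertices with $M=\Theta(N)$ edges, $\maxdegree{\cdot}=(1+\smallo1)\tfrac{\log N}{\log\log N}$ \whp, and more precisely that its maximum degree is obtained by solving the same transcendental equation governing $\specialconcentration{\cdot}$; this is exactly the content of (the proof of) \Cref{thm:max_degree_ergraph2} and \Cref{cor:maxdegree}, applied to $G(n,m)$, $\largestcomponent{G(n,m)}$ and $\rest{G(n,m)}$ separately.

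The heart of the argument is a \emph{quantitative} comparison: I need the gap between the relevant \lq $\nu$-values\rq\ to be $\Theta\!\big(\log n/(\log\log n)^2\big)$ and not merely $\smallo{\log n/\log\log n}$. Concretely, if $\phi(N,M)$ denotes the typical maximum degree of $G(N,M)$ (the near-integer solution of the defining equation, cf. \Cref{def:nu}), then I want to show that $\phi(N,c'N)-\phi(N,cN)=\Theta\!\big(\log N/(\log\log N)^2\big)$ whenever $c<c'$ are constants. This is a calculus estimate on the function $f$ from \Cref{def:nu}: differentiating $f_{N,cN}(x)=0$ implicitly in the density parameter shows $\partial\nu/\partial(\log d)\asymp x/\log x \asymp \log N/(\log\log N)^2$ at $x=\nu$, so a constant change in $d$ moves $\nu$ by $\Theta(\log N/(\log\log N)^2)$; since $c_1<c_2$ and $d_i\to c_i$, the densities of $G_1$ and $G_2$ (and of their giants, and of their rests) differ by a constant factor for large $n$, giving the claimed order. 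The same implicit-function computation, applied with the density held essentially fixed but comparing the supercritical density $d$ of $\largestcomponent{G_1}$ against the subcritical dual density $d^\ast<1$ of $\rest{G_1}$ (also a constant gap), yields $\maxdegree{\largestcomponent{G_1}}-\maxdegree{\rest{G_1}}=\Theta(\log n/(\log\log n)^2)$.

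The remaining steps are bookkeeping. For $\maxdegree{G_2}-\maxdegree{G_1}$ one applies the two-point concentration of \Cref{thm:max_degree_ergraph2} to each $G_i$ \whp, notes that the centres $\phi(n,m_i)$ differ by $\Theta(\log n/(\log\log n)^2)=\smallomega1$, so the $\bigo1$-width concentration windows are disjoint and the difference is \whp\ positive of the right order. For the giant components, one first conditions on the (whp linear) order $N_i$ of $\largestcomponent{G_i}$; conditionally, $\largestcomponent{G_i}$ is a uniformly random connected graph on $N_i$ labelled vertices with its given number of edges, and the maximum-degree concentration for such graphs follows from the unconditioned statement by a standard conditioning-on-connectivity argument (the connectivity event has probability bounded away from $0$ in the supercritical regime after restricting to the giant's parameters, or one invokes contiguity). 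For the rests one uses the duality description of $\rest{G_i}$ as (contiguous to) a subcritical $G(n',m')$ and applies \Cref{thm:max_degree_ergraph2} there. Finally, since $G_1$ and $G_2$ are independent, all four difference statements hold simultaneously \whp\ by a union bound over finitely many events.

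I expect the main obstacle to be the quantitative derivative estimate in the second paragraph: one must control $\nu(N,dN)$ finely enough in $d$ to distinguish a constant-factor change in density, which requires careful asymptotics of the solution of $f=0$ (showing $\nu\sim\log N/\log\log N$ with a second-order term, and that $\partial_d\nu$ has the stated order uniformly for $d$ in a compact subinterval of $(0,\infty)$), rather than the cruder bounds needed merely for two-point concentration. The secondary subtlety is making the passage from $G(n,m)$ to its giant component and rest rigorous — transferring the maximum-degree concentration through the conditioning on component structure — but this is standard given the contiguity results already in the literature that the paper cites.
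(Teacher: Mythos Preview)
Your overall strategy matches the paper's: use the two-point concentration of $\maxdegree{G(n,m)}$ around $\concentration{n}{2m}$ (this is \Cref{thm:G_n_m_bins_balls}\ref{thm:G_n_m_bins_balls2}, not \Cref{thm:max_degree_ergraph2}, which is the \emph{non}-concentration statement), use the symmetry/duality rule to handle $\rest{G_i}$ as a subcritical $G(n',m')$, and extract the $\Th{\log n/(\log\log n)^2}$ gap from a derivative estimate on $\Concentration$. That derivative estimate is exactly \Cref{lem:nu}\ref{lem:nu10}, and your implicit-differentiation sketch is the right way to prove it.

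There is, however, a genuine gap in your treatment of $\maxdegree{\largestcomponent{G_i}}$. You propose to condition on the vertex set and edge count of the giant, view it as a uniformly random \emph{connected} graph on those parameters, and transfer the concentration from $G(N,M)$ via ``the connectivity event has probability bounded away from $0$''. But for $M=\Th{N}$ the random graph $G(N,M)$ is \whp\ \emph{disconnected} (it has isolated vertices), so $\prob{G(N,M)\text{ connected}}=\smallo{1}$ and the contiguity transfer fails. One could try to repair this by analysing a uniformly random connected graph directly, but that is a nontrivial detour.

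The paper sidesteps this entirely: once you know $\maxdegree{G_i}=\concentration{n}{d_in}+\bigo{1}$ and $\maxdegree{\rest{G_i}}=\concentration{n}{c_i'n}+\smallo{\log n/(\log\log n)^2}$ with $c_i'<1<c_i$, \Cref{lem:nu}\ref{lem:nu10} gives $\maxdegree{\rest{G_i}}<\maxdegree{G_i}$ \whp, and hence $\maxdegree{\largestcomponent{G_i}}=\maxdegree{G_i}$ \whp. In other words, the vertex of maximum degree lies in the giant, so no separate analysis of the giant's internal distribution is needed. Replacing your conditioning argument with this one-line observation closes the gap.
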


\Cref{pro:ER} follows by a generalised version of \Cref{thm:max_degree_ergraph} (see \Cref{thm:G_n_m_bins_balls}\ref{thm:G_n_m_bins_balls2}) and classical results on the \ER\ random graph $G(n,m)$. (For the sake of completeness, we provide a sketch of the proof of \Cref{pro:ER} in \Cref{sec:proof_er}.)

Finally, we consider the dense case when $m=m(n)=\rounddown{\ratio n}$ for $\ratio \in(1,3)$ and show that in this regime $\maxdegree{\planargraph}$ is not concentrated on any subset of $[n]$ with bounded size. 
\begin{thm}\label{thm:main_dense}
	Let $\planargraph=\planargraph(n,m)\ur \planarclass(n,m)$ and assume $m=m(n)=\rounddown{\ratio n}$ for $\ratio \in(1,3)$. If $I=I(n)\subseteq[n]$ is such that \whp\ $\maxdegree{\planargraph}\in I$, then $\left|I\right|=\smallomega{1}$.
\end{thm}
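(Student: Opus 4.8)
The plan is to reduce \Cref{thm:main_dense} to showing that $\maxdegree{\planargraph}$ carries no atom of constant mass, i.e.
\[
\sup_{a\in\N}\prob{\maxdegree{\planargraph}=a}=\smallo{1}.
\]
This suffices: if $I=I(n)\subseteq[n]$ has $|I|\le C$ for a constant $C$, then $\prob{\maxdegree{\planargraph}\in I}\le C\cdot\sup_{a}\prob{\maxdegree{\planargraph}=a}=\smallo{1}$, so $\maxdegree{\planargraph}\in I$ cannot hold \whp; and if $|I(n)|$ stayed bounded along a subsequence, the same bound would force $\prob{\maxdegree{\planargraph}\in I}\to0$ along it, again contradicting $\maxdegree{\planargraph}\in I$ \whp. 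To bound $\prob{\maxdegree{\planargraph}=a}$ I would run the scheme behind \Cref{thm:max_degree_ergraph2}, but with the enumeration of $\mathcal{G}(n,m)$ replaced by that of $\planarclass(n,m)$. For $d=d(n)\in\N$ let $X_d$ be the number of vertices of degree at least $d$ in $\planargraph$; then $X_{d+1}\le X_d$ and $\{\maxdegree{\planargraph}=a\}=\{X_a\ge1\}\setminus\{X_{a+1}\ge1\}$, so $\prob{\maxdegree{\planargraph}=a}=\prob{X_a\ge1}-\prob{X_{a+1}\ge1}$. It is therefore enough to show that the non-increasing function $d\mapsto\prob{\maxdegree{\planargraph}\ge d}=\prob{X_d\ge1}$ drops from $1-\smallo{1}$ to $\smallo{1}$ with every consecutive difference of size $\smallo{1}$; this is exactly the point where planar graphs behave differently from $G(n,m)$ with $m=\smallo{n\log n}$, for which the analogous transition is completed within a bounded band.

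For the first moment, symmetry of the vertex labels gives $\expec{X_d}=n\cdot\prob{\degree{1}{\planargraph}\ge d}$, and $\prob{\degree{1}{\planargraph}\ge d}$ is a ratio of cardinalities of subclasses of $\planarclass(n,m)$, the numerator counting planar graphs on $[n]$ with $m$ edges in which vertex $1$ has at least $d$ neighbours. Deleting vertex $1$ and its incident edges and summing over the local structure at $1$, such graphs are governed --- up to a multiplicity coming from the ways of re-inserting $1$ --- by planar graphs on $[n]\setminus\{1\}$ with $m-\Th{d}$ edges carrying a distinguished set of $d$ vertices on a common face, equivalently a distinguished labelled outerplanar link on those vertices. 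Feeding in the known asymptotic enumeration of connected, $2$-connected and $3$-connected planar graphs with prescribed numbers of vertices and edges (as used e.g. in \cite{gim}) and of outerplanar graphs yields an estimate for $\prob{\degree{1}{\planargraph}\ge d}$, hence for $\expec{X_d}$, that pins down the critical value $d^\ast=d^\ast(n)$ with $\expec{X_{d^\ast}}=\Th{1}$. In particular $\expec{X_d}=\smallo{1}$ once $d$ is somewhat above $d^\ast$, so by Markov's inequality $\prob{\maxdegree{\planargraph}\ge d}=\smallo{1}$ there, which disposes of all windows lying above the critical value.

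The heart of the argument --- and the step I expect to be the main obstacle --- is to understand $\prob{X_d\ge1}$ for $d$ near and below $d^\ast$, which requires the second-moment (and, if needed, higher-moment) analysis of $X_d$. Here $\expec{X_d(X_d-1)}=n(n-1)\cdot\prob{\degree{1}{\planargraph}\ge d,\ \degree{2}{\planargraph}\ge d}$, and the planarity constraint creates genuine positive correlation: two vertices of large degree economise on both edges and vertices by sharing a common complete bipartite link $K_{2,d}$ (which is planar, unlike $K_{3,d}$), so such a pair is noticeably more likely than independence would predict. Quantifying this overdispersion is precisely what stops the transition of $\prob{X_d\ge1}$ from being confined to a bounded band, and it forces one to obtain joint asymptotics for planar graphs with two (or more) vertices of prescribed large degree, including the delicate configurations where their links overlap; this is where I expect one needs the finer enumerative and analytic input --- in the spirit of the singularity-analysis and Boltzmann-sampling arguments of \cite{max_degree_vertex_model} --- rather than first and second moments alone. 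With sufficient control one concludes that $\prob{\maxdegree{\planargraph}=d}=\prob{X_d\ge1}-\prob{X_{d+1}\ge1}=\smallo{1}$ uniformly in $d$ (Markov above $d^\ast$, and gradual decrease of $d\mapsto\prob{X_d\ge1}$ near and below $d^\ast$), hence $\sup_a\prob{\maxdegree{\planargraph}=a}=\smallo{1}$, which proves \Cref{thm:main_dense}.
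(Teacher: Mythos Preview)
Your proposal is not a proof but an outline with the decisive step left open. You reduce the theorem to the stronger assertion $\sup_{a}\prob{\maxdegree{\planargraph}=a}=\smallo{1}$, and you correctly note that the crux would be to show that $d\mapsto\prob{X_d\ge1}$ declines by $\smallo{1}$ at each step across the critical window. But you do not carry this out: you explicitly flag the second-moment (and higher-moment) control near $d^\ast$ as ``the main obstacle'' and defer to unspecified singularity-analysis input ``in the spirit of'' \cite{max_degree_vertex_model}. That is the whole content of the theorem; without it, nothing has been proved. Note also that your reduction is strictly stronger than what is needed: the theorem is compatible with a single atom of constant mass (e.g.\ $\prob{\maxdegree{\planargraph}=d_0(n)}=1/2$ with the remaining $1/2$ spread over $\smallomega{1}$ values), so aiming at $\sup_a\prob{\maxdegree{\planargraph}=a}=\smallo{1}$ may be attacking a harder, possibly different, statement.

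The paper's argument is entirely different and avoids all analytic machinery. It does not estimate $\prob{\maxdegree{\planargraph}=d}$ at all. Instead it uses the Gim\'enez--Noy enumeration (\Cref{thm:number_planar}) only to show (\Cref{lem:isolated}) that for every fixed $\niv,\nie$ the event ``$\planargraph$ has $\niv$ isolated vertices and $\nie$ isolated edges'' has probability bounded away from $0$. Then a purely combinatorial local switching (\Cref{lem:increase_max_degree}) --- delete two isolated edges and attach one isolated vertex as a common neighbour of a max-degree vertex and one of its neighbours --- exhibits, by double counting, a uniform lower bound $\left|\planarclass(n,m,\niv+3,\nie-2,d+1)\right|\ge \tfrac{1}{8\niv^3}\left|\planarclass(n,m,\niv,\nie,d)\right|$. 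Iterating $A$ times from a suitable starting configuration yields $A+1$ consecutive values $d,d+1,\ldots,d+A$ each with $\prob{\maxdegree{\planargraph}=d+i}\ge c(A)>0$; hence any $I$ with $\prob{\maxdegree{\planargraph}\in I}\to1$ must contain all of them, so $|I|>A$ for every fixed $A$. This is elementary and robust, whereas your moment approach would require precise joint asymptotics for planar graphs with several prescribed high-degree vertices --- exactly the part you left undone.
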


We note that in a planar graph on $n\geq 3$ vertices there are at most $3n-6$ edges, while a general (not necessarily planar) graph can have up to $\binom{n}{2}$ edges. In view of this fact, it seems natural that the \lq transition\rq\ from the two-point concentration to the non-concentration of the maximum degree occurs {\em much earlier} in $\planargraph(n,m)$ than in $G(n,m)$, namely at $m\sim n$ in $\planargraph(n,m)$ (cf. \Cref{thm:main_sub,thm:main,thm:main_dense}) instead of $m\sim n\log n$ in $G(n,m)$ (cf. \Cref{thm:max_degree_ergraph,thm:max_degree_ergraph2}). It is worth noting that the \lq threshold\rq\ where the number of vertices outside the largest component drops from linear to sublinear is $m\sim n$ for the random planar graph $\planargraph(n,m)$, while it is $m \sim n\log n$ in the case of $G(n,m)$.

\subsection{Outline of the paper} 
The rest of the paper is structured as follows. After giving the necessary definitions, notations, and concepts in \Cref{sec:prelim}, we provide our proof strategy in \Cref{sec:strategy}. \Cref{sec:balls_bins} is devoted to the balls-into-bins model, which we use in \Cref{sec:ergraph,sec:forests} to show concentration of the maximum degree in the \ER\ random graph, in a random graph without complex components, and in a random forest with specified roots, respectively. In \Cref{sec:proof,sec:planar_dense} we provide the proofs of our main results. Subsequently in \Cref{sec:nu}, we consider the function $\concentration{\nbins}{\nballs}$ introduced in \Cref{def:nu} in more detail. Finally in \Cref{sec:discussion}, we discuss a possible generalisation of our results and various questions that remain unanswered.

\section{Preliminaries}\label{sec:prelim}
\subsection{Notations for graphs}
We consider only undirected graphs or multigraphs and we always assume that the graphs are vertex-labelled.
\begin{definition}
	Given a (simple or multi) graph $H$ we denote by
	\begin{itemize}
		\item 
		$\vertexSet{H}$ the vertex set of $H$ and
		\item[]
		$\numberVertices{H}$ the order of $H$, i.e. the number of vertices in $H$;
		\item 
		$\edgeSet{H}$ the edge set of $H$ and
		\item[]
		$\numberEdges{H}$ the size of $H$, i.e. the number of edges in $H$;	
		\item 
		$\largestcomponent{H}$ the largest component of $H$ (if there are more than one largest component, we pick that which contains the vertex with smallest label);
		\item
		$\rest{H}:=H \setminus \largestcomponent{H}$ the graph obtained from $H$ by deleting the largest component;
		\item
		$\degree{v}{H}$ the degree of a vertex $v\in\vertexSet{H}$. If $\vertexSet{H}=[n]$, then we call $\left(\degree{1}{H}, \ldots, \degree{n}{H}\right)$ the degree sequence of $H$.
	\end{itemize}
\end{definition}
\begin{definition}\label{def:graph_class}
	Given a class $\cl$ of graphs (e.g. the class of planar graphs), we denote by 
	$\cl(n)$ the subclass of $\cl$ containing the graphs on vertex set $[n]$ 
	and by $\cl(n,m)$ the subclass of $\cl$ containing the graphs on vertex set $[n]$ with $m$ edges, respectively. We write $\randomGraph(n)\ur \cl(n)$ for a graph chosen uniformly at random from $\cl(n)$
	and $\randomGraph(n,m)\ur \cl(n,m)$ for a graph chosen uniformly at random from $\cl(n,m)$, respectively.
\end{definition}

\subsection{Random variables and asymptotic notation}
\begin{definition}
	Let $S$ be a finite set and let $Y$ and $Z$ be random variables with values in $S$. Then we say that $Y$ is {\em distributed like} $Z$, denoted by $Y\sim Z$, if for all $x \in S$ we have $\prob{Y=x}=\prob{Z=x}$.
\end{definition}
Throughout this paper, we use the standard Landau notation and all asymptotics are taken with respect to $n$, i.e. when $n\to \infty$. In order to express that two random variables have asymptotically a \lq similar\rq\ distribution, we use the notion of contiguity.
\begin{definition}
	For each $n\in\N$, let $S=S(n)$ be a finite set and let $Y=Y(n)$ and $Z=Z(n)$ be random variables with values in $S$. We say that $Z$ is {\em contiguous} with respect to $Y$, denoted by $\contiguous{Z}{Y}$, if for all sequences $I=I(n)\subseteq S(n)$
	\begin{align*}
		\left(\lim\limits_{n\to \infty}\prob{Y\in I}=1\right) ~\implies~ \left(\lim\limits_{n\to \infty}\prob{Z\in I}=1\right).
	\end{align*}
\end{definition}

\subsection{Complex part and core}\label{sub:decomposition}
We say that a component of a graph $H$ is {\em complex} if it has at least two cycles. The union of all complex components is called the {\em complex part} $\complexpart{H}$. We call the graph $H$ {\em complex} if all its components are complex. The union of all non-complex components is the {\em non-complex part} $\restcomplex{H}:=H\setminus \complexpart{H}$. The {\em core} $\core{H}$, also known as the 2-core, is the maximal subgraph of $\complexpart{H}$ of minimum degree at least two. We observe that the core $\core{H}$ can be obtained from $H$ by first removing vertices of degree one recursively and then deleting isolated cycles. We emphasise that the core $\core{H}$ is not necessarily connected. We denote by $\complexlargestcore{H}$ the component of $\complexpart{H}$ containing the largest component of the core $\largestcomponent{\core{H}}$. The rest of the complex part is denoted by $\complexrestcore{H}:=\complexpart{H}\setminus\complexlargestcore{H}$. We call $\complexlargestcore{H}$ and $\complexrestcore{H}$ the {\em large complex part} and the {\em small complex part}, respectively. We note that the number of vertices in $\complexlargestcore{H}$ is not necessarily larger than in $\complexrestcore{H}$, but it will be true in most cases we consider. Using this decomposition we can split $H$ into the three disjoint parts $\complexlargestcore{H}$, $\complexrestcore{H}$, and $\restcomplex{H}$. Moreover, we have the relations $\core{\complexlargestcore{H}}=\largestcomponent{\core{H}}$ and $\core{\complexrestcore{H}}=\rest{\core{H}}$.

Later we will construct the large complex part, the small complex part, and the non-complex part of a random planar graph independently of each other. To that end, we will use the following two graph classes.
\begin{definition}\label{def:random_complex_part}
	Let $C$ be a core, i.e. a graph with minimum degree at least two containing no isolated cycles, and $q\in \N$. Then we denote by $\complexclass(C,q)$ the class consisting of {\em complex} graphs having core $C$ and vertex set $[q]$. We let $\complexgraph(C,q)\ur\complexclass(C,q)$ be a graph chosen uniformly at random from this class.
\end{definition}

\begin{definition}\label{def:nocomplex}
We denote by $\nocomplexclass$ the class consisting of all graphs without complex components. For $n,m\in\N$ we let $\nocomplexclass(n,m)\subseteq\nocomplexclass$ be the subclass of all graphs on vertex set $[n]$ with $m$ edges and we write $\nocomplex(n,m)\ur \nocomplexclass(n,m)$ for a graph chosen uniformly at random from $\nocomplexclass(n,m)$.
\end{definition}

\begin{remark}\label{rem:connection_conditional}
Let $C$ be a core, $q, n, m \in\N$, and $H\in\complexclass(C,q)$ be a fixed graph. Then there are precisely $\left|\nocomplexclass(u,w)\right|$ many graphs $H'\in\planarclass(n,m)$ whose complex part is $H$, where $u:=n-q$ and $w:=m-\numberEdges{C}+\numberVertices{C}-q$. As this number is independent of $H\in\complexclass(C,q)$, there is a nice relation between the complex part $\complexpart{\planargraph}$ of the random planar graph $\planargraph=\planargraph(n,m)$ and $\complexgraph(C,q)\ur\complexclass(C,q)$, the latter being as in \Cref{def:random_complex_part}: Conditioned on the event that the core $\core{\planargraph}$ is $C$ and $\numberVertices{\complexpart{\planargraph}}=q$, the complex part $\complexpart{\planargraph}$ is distributed like $\complexgraph(C,q)$. Similarly, for fixed $\tilde{n},\tilde{m},n,m \in \N$ let $\restcomplex{\planargraph}$ be the non-complex part of $\planargraph$ and $\nocomplex(\tilde{n},\tilde{m})\ur \nocomplexclass(\tilde{n},\tilde{m})$ be as in \Cref{def:nocomplex}. Then, conditioned on the event that $\numberVertices{\restcomplex{\planargraph}}=\tilde{n}$ and $\numberEdges{\restcomplex{\planargraph}}=\tilde{m}$, the non-complex part $\restcomplex{\planargraph}$ is distributed like $\nocomplex(\tilde{n},\tilde{m})$.
\end{remark}

\subsection{Conditional random graphs}\label{sub:conditional_random_graphs}
Given a class $\cl$ of graphs it is sometimes quite difficult to directly analyse the random graph $\randomGraph=\randomGraph(n)\ur\cl(n)$. In such cases we will use the idea of conditional random graphs. Loosely speaking, we split $\cl$ into disjoint subclasses and consider for each subclass $\tilde{\cl}$ the random graph $\tilde{\randomGraph}=\tilde{\randomGraph}(n)\ur \tilde{\cl}(n)$, in other words, the random graph $\randomGraph$ conditioned on the event that $\randomGraph\in \tilde{\cl}$. If we can show that some graph property holds in all these \lq conditional\rq\ random graphs \whp, then \whp\ this property holds also in $\randomGraph$. The following definition and lemma make that idea more precise.
\begin{definition}\label{def:feasible}
	Given a class $\cl$ of graphs, a set $S$, and a function $\func:\cl\to S$, we call a sequence $\seq=(\term_n)_{n\in \N}$ {\em feasible} for $\left(\cl, \func\right)$ if for each $n \in \N$ there exists a graph $H \in \cl(n)$ such that $\func(H)=\term_n$. Moreover, for each $n\in \N$ we denote by $\left(\condGraph{\randomGraph}{\seq}\right)(n)$ a graph chosen uniformly at random from the set $\left\{H \in \cl(n): \func(H)=\term_n\right\}$. We will often omit the dependence on $n$ and write just $\condGraph{\randomGraph}{\seq}$ (i.e. \lq $\randomGraph$ conditioned on $\seq$\rq) instead of $\left(\condGraph{\randomGraph}{\seq}\right)(n)$.
\end{definition}
\begin{lem}[{\cite[Lemma 3.2]{cycles}}]\label{lem:conditional_random_graphs}
	Let $\cl$ be a class of graphs, $S$ a set, $\func:\cl\to S$ a function, $\property$ a graph property, and $\randomGraph=\randomGraph(n)\ur \cl(n)$. If for every sequence $\seq=(\term_n)_{n\in \N}$ that is feasible for $\left(\cl, \func\right)$ we have \whp\ $\condGraph{\randomGraph}{\seq} \in \property$, then we have \whp\ $\randomGraph \in \property$.
\end{lem}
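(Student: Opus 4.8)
The plan is to run a law-of-total-probability argument over the fibers of $\func$ and then upgrade the per-sequence hypothesis to a uniform-in-the-fiber statement by a short contradiction argument.

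First I would fix $n$ and split the (finite) class $\cl(n)$ according to the value of $\func$, so that the fibers are the sets $\{H\in\cl(n):\func(H)=s\}$ for $s$ in the image $\func(\cl(n))$. Conditioning a uniform random element of $\cl(n)$ on the event $\func(\randomGraph(n))=s$ yields a uniform random element of the corresponding fiber, which is exactly the distribution of $\condGraph{\randomGraph}{\seq}$ at stage $n$ for any sequence $\seq$ feasible for $(\cl,\func)$ with $\term_n=s$. Hence, writing $q_s(n):=\prob{\randomGraph(n)\notin\property \mid \func(\randomGraph(n))=s}$, the law of total probability gives
\begin{align*}
\prob{\randomGraph(n)\notin\property}\;=\;\sum_{s\in\func(\cl(n))}\prob{\func(\randomGraph(n))=s}\;q_s(n)\;\le\;\max_{s\in\func(\cl(n))}q_s(n)\;=:\;h(n),
\end{align*}
so it suffices to show $h(n)\to 0$.

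Next I would establish $h(n)\to 0$ by contradiction. If it failed, there would be $\varepsilon>0$ and an infinite set $N\subseteq\N$ with $h(n)>\varepsilon$ for all $n\in N$; since $\cl(n)$ is finite, for $n\in N$ I may pick $s_n\in\func(\cl(n))$ with $q_{s_n}(n)=h(n)>\varepsilon$, and for $n\notin N$ I pick $s_n\in\func(\cl(n))$ arbitrarily (the image is non-empty because $\cl(n)$ is). Setting $\term_n:=s_n$ defines a sequence $\seq=(\term_n)_{n\in\N}$ that is feasible for $(\cl,\func)$ by construction, yet $\prob{\condGraph{\randomGraph}{\seq}\notin\property}=q_{s_n}(n)>\varepsilon$ for every $n\in N$, so $\prob{\condGraph{\randomGraph}{\seq}\in\property}$ does not tend to $1$ --- contradicting the hypothesis for this particular $\seq$. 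Therefore $h(n)\to 0$, and together with the displayed inequality this yields $\prob{\randomGraph(n)\notin\property}\to 0$, i.e.\ \whp\ $\randomGraph\in\property$.

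The one real subtlety --- and the step I would be most careful with --- is precisely this last passage from ``each fixed feasible sequence is \whp\ in $\property$'' to ``the worst fiber at stage $n$ is good'': the rate of convergence in the hypothesis is allowed to depend on the sequence, so one genuinely needs the freedom to assemble the hypothetical bad fibers $s_n$ into a single feasible sequence and then invoke the universal quantifier. Everything else is routine: finiteness of $\cl(n)$ makes the total-probability sum finite and the maximum attained, and the identification of the conditioned random graph with a uniform element of a fiber is immediate from the definitions.
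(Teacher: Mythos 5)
Your proof is correct: the decomposition over the fibers of $\func$, the bound by the worst conditional probability, and the contradiction step that assembles the hypothetical bad values $s_n$ into a single feasible sequence (which is exactly where the universal quantifier over feasible sequences is needed) together give the lemma, and the finiteness of $\cl(n)$ and non-emptiness needed along the way are available. The paper itself does not prove this statement but cites it from \cite{cycles}, and your argument is essentially the standard proof given there, so there is nothing further to compare.
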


\subsection{Internal structure of a random planar graph}\label{sub:internal_structure}
In the proofs of our main results we will use some results from \cite{planar,surface} on the internal structure of a random planar graph $\planargraph(n,m)$, e.g. asymptotic order of the core, which are reformulated to simplify asymptotic notation. 
\begin{thm}[\hspace{1sp}{\cite{planar,surface}}]\label{thm:internal_structure}
	Let $\planargraph=\planargraph(n,m)\ur \planarclass(n,m)$, $C=\core{\planargraph}$ be the core, $\Complexlargestcore=\complexlargestcore{\planargraph}$ the large complex part, $\Complexrestcore=\complexrestcore{\planargraph}$ the small complex part, $\Restcomplex=\restcomplex{\planargraph}$ the non-complex part, and $\Largestcomponent=\largestcomponent{\planargraph}$ the largest component of $\planargraph$. In addition, let $h=h(n)=\smallomega{1}$ be a function tending to $\infty$ arbitrarily slowly and $\funcL=\funcL(n)$ and $\funcR=\funcR(n)$ be as in \Cref{def:functionLR}. We assume that $m=m(n)\geq n/2+\smallomega{n^{2/3}}$ is such that $m\leq n+n^{1-\delta}$ for some $\delta>0$ and let $\beta:=\min\left\{\delta/2,1/5\right\}$. Then \whp\ the following hold.
\begin{enumerate}
\item\label{thm:internal_structure1}
$\maxdegree{C}=\Th{1}$;
\item\label{thm:internal_structure2}
$\numberVertices{\largestcomponent{C}}=\bigo{\funcL^{1-\beta}}$;
\item\label{thm:internal_structure3}
$\Complexlargestcore=\Largestcomponent$;
\item\label{thm:internal_structure4}
$\numberVertices{\Complexlargestcore}=\Th{\funcL}$;
\item\label{thm:internal_structure5}
$\numberVertices{\Complexrestcore}=\bigo{h\funcR^{2/3}}$;
\item\label{thm:internal_structure6}
$\numberVertices{\Restcomplex}=\Th{\funcR}$;
\item\label{thm:internal_structure7}
$\numberEdges{\Restcomplex}=\numberVertices{\Restcomplex}/2+\bigo{h\numberVertices{\Restcomplex}^{2/3}}$.
\end{enumerate}	
\end{thm}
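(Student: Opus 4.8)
This statement collects, in a single asymptotic language, the structural facts about $\planargraph(n,m)$ in the supercritical sparse regime that were established in \cite{planar,surface}; the parameters $\funcL$, $\funcR$ and $\beta=\min\{\delta/2,1/5\}$ of \Cref{def:functionLR} are exactly the orders of magnitude and error exponents that emerge there in each of the five ranges of $m$. The plan is therefore not to re-derive the phase transition, but to (i) recall from \cite{planar,surface} the relevant counts for the core, the complex part, and the non-complex part, (ii) pass between $\planargraph$ and the conditional models using the decomposition of \Cref{sub:decomposition} together with \Cref{rem:connection_conditional} and \Cref{lem:conditional_random_graphs}, and (iii) reformulate the explicit (polylogarithmic, respectively $n^{\smallo{1}}$) error terms of \cite{planar,surface} as the generic slowly growing factor $h$.

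The macroscopic assertions \ref{thm:internal_structure3}, \ref{thm:internal_structure4} and \ref{thm:internal_structure6} come directly from the component-size results of \cite{planar,surface}. In each range of \Cref{def:functionLR} those results give that \whp\ $\planargraph$ has exactly one complex component, that this component is also the largest component of $\planargraph$ --- which is \ref{thm:internal_structure3}, $\complexlargestcore{\planargraph}=\largestcomponent{\planargraph}$ --- that it spans $\Th{\funcL}$ vertices --- which is \ref{thm:internal_structure4} --- and that, once uniqueness of the complex component identifies the non-complex part with \lq everything outside the largest component\rq, the non-complex part spans $\Th{\funcR}$ vertices, which is \ref{thm:internal_structure6}. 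The definitions of $\funcL$ and $\funcR$ in \Cref{def:functionLR} were tailored precisely so that these orders hold; the five cases are the weakly supercritical window, the intermediate range, and the three windows straddling $m=n$, and in each the cited papers supply the matching estimate, so this step is essentially quoting and aligning notation.

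The sparseness assertions \ref{thm:internal_structure1}, \ref{thm:internal_structure2}, \ref{thm:internal_structure5} and \ref{thm:internal_structure7} are where the hypothesis $m\le n+n^{1-\delta}$ and the exponent $\beta$ do the work: so close to the \lq forest threshold\rq\ $m\sim n$, the surplus of edges that makes the complex part complex is small, and \cite{planar,surface} quantify this. From their core-size estimates one gets that $\largestcomponent{C}$ has order $\bigo{\funcL^{1-\beta}}$, which is \ref{thm:internal_structure2}; from their second-largest-component estimates, that a second complex component is improbable and, when present, near-critical of order $\bigo{h\funcR^{2/3}}$, which is \ref{thm:internal_structure5}; and from the fact that essentially all the surplus has been absorbed into the complex part, that the non-complex part is itself near-critical, $\numberEdges{\Restcomplex}=\numberVertices{\Restcomplex}/2+\bigo{h\numberVertices{\Restcomplex}^{2/3}}$, which is \ref{thm:internal_structure7}; in each case their explicit error terms are rewritten with a single $h$. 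Finally \ref{thm:internal_structure1} follows from \ref{thm:internal_structure2}: suppressing the degree-two vertices of $C$ yields a multigraph of minimum degree at least three whose number of vertices is of the same small order as the surplus of the complex part and whose edge-to-vertex ratio is bounded; a uniformly random multigraph of that type --- reached via \Cref{rem:connection_conditional} and \Cref{lem:conditional_random_graphs} --- has maximum degree $\Th{1}$ \whp\ by a routine first-moment estimate; and since neither suppressing nor reinstating degree-two vertices changes any degree exceeding two, $\maxdegree{C}=\Th{1}$ (the lower bound being immediate since the core has minimum degree at least two).

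The part I expect to require real care, rather than pure translation, is the unified treatment of \ref{thm:internal_structure2}, \ref{thm:internal_structure5} and \ref{thm:internal_structure7} across the five qualitatively different ranges of \Cref{def:functionLR}: in each one must locate the corresponding estimate in \cite{planar,surface} --- phrased there in terms of $s$, $t$, or the average degree rather than in terms of $\funcL$, $\funcR$ and $h$ --- confirm that the various error terms remain compatible once unified under a single slowly growing $h$, and check that the resulting bounds are simultaneously consistent with the calibration of $\funcL$, $\funcR$ and $\beta$. The remaining assertions should then follow by direct citation.
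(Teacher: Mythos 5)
The paper gives no proof of this theorem at all: it is imported verbatim from \cite{planar,surface}, reformulated only to unify notation via $\funcL$, $\funcR$, $h$ and $\beta$, so your plan of citing those papers and translating their estimates into this calibrated notation is exactly what the paper does. Your additional sketch (e.g.\ deducing \ref{thm:internal_structure1} from \ref{thm:internal_structure2} by a first-moment bound on the kernel) goes beyond what the paper records but is consistent with the cited sources and raises no concern.
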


\subsection{Properties of $\concentration{\nbins}{\nballs}$}
We will use the following basic properties of $\concentration{\nbins}{\nballs}$ introduced in \Cref{def:nu}. 
\begin{lem}\label{lem:nu}
Let the function $\concentration{\nbins}{\nballs}$ be defined as in \Cref{def:nu} and $\specialconcentration{\nbins}=\concentration{n}{n}$. Then we have
\begin{enumerate}
\item \label{lem:nu5} $\concentration{\nbins}{\nballs}>1$ for all $\nbins, \nballs \in \N$;
\item \label{lem:nu7} if $\nballs=\nballs(\nbins)=\bigo{n^{1/3}}$, then $\concentration{\nbins}{\nballs}\leq 5/3+\smallo{1}$;
\item \label{lem:nu1} if $\nballs=\nballs(\nbins)=\Th{\nbins}$, then $\concentration{\nbins}{\nballs}=\left(1+\smallo{1}\right)\log \nbins/\log\log\nbins$;
\item \label{lem:nu4} $\concentration{\nbins}{\nballs}$ is strictly increasing in the argument $\nballs$;
\item \label{lem:nu6}
if $\nballs=\nballs(\nbins)=\bigo{\nbins}$, then $\concentration{\nbins}{\nballs}=\smallomega{\nballs/\nbins}$;
\item \label{lem:nu3} if $\nballs=\nballs(\nbins)=\Th{\nbins}$ and $d=d(n)=\smallo{\nbins\left(\log \log \nbins\right)^2/\log \nbins}$, then $\concentration{\nbins}{\nballs+d}-\concentration{\nbins}{\nballs}=\smallo{1}$;
\item \label{lem:nu9} $\specialconcentration{\nbins}$ is strictly increasing;
\item \label{lem:nu2} if $c=c(\nbins)=\Th{1}$ and $\nballs=\nballs(\nbins)=\Th{\nbins}$, then $\concentration{c\nbins}{c\nballs}=\concentration{\nbins}{\nballs}+\smallo{1}$;
\item \label{lem:nu10} if $c=c(\nbins)=\Th{1}$, then $\concentration{\nbins}{c\nbins}-\specialconcentration{\nbins}=\left(\log c+\smallo{1}\right)\log n/\left(\log \log n\right)^2$.
\end{enumerate}
\end{lem}
We provide a proof of \Cref{lem:nu} in \Cref{sub:proof_nu}.

\section{Proof strategy}\label{sec:strategy}
In order to prove \Cref{thm:main_sub} on the two-point concentration of $\maxdegree{\planargraph(n,m)}$ when $m\leq n/2+\bigo{n^{2/3}}$, we will use the known fact that with positive probability the \ER\ random graph $G(n,m)$ is planar in this regime (see \Cref{thm:non_complex}). Thus, it suffices to determine $\maxdegree{G(n,m)}$ instead of $\maxdegree{P(n,m)}$, which we will do by proving that $\maxdegree{G(n,m)}$ \lq behaves\rq\ like the maximum load of an appropriate ball-into-bins model (see \Cref{sub:strategy_without_complex} for details).

The proof of \Cref{thm:main} will be based on the following result on the typical structure of $\planargraph=\planargraph(n,m)$, which can be derived by using statements from \cite{planar,surface}: Informally speaking, the largest component $\Largestcomponent=\largestcomponent{\planargraph}$ consists of a family $F$ of rooted tree components, which are connected via \lq few\rq\ edges between the {\em roots} of the tree components that are exactly the vertices of $\largestcomponent{\core{\planargraph}}$, i.e. the largest component of the core. The number of tree components in $F$ is much smaller than $\numberVertices{F}$, because the order of the core is typically much smaller than the order of the largest component (see \Cref{thm:internal_structure}\ref{thm:internal_structure2} and \ref{thm:internal_structure4}). In addition, the remaining part $\Rest=\rest{\planargraph}=\planargraph\setminus\Largestcomponent$ \lq behaves\rq\ like an \ER\ random graph $G(\tilde{n}, \tilde{n}/2)$ for a suitable $\tilde{n}=\tilde{n}(n)$. We refer to \Cref{fig:typical_structure} for an illustration of this structure. 

Then we will derive the two-point concentration of $\maxdegree{\Rest}$ by studying $G(\tilde{n}, \tilde{n}/2)$. Using the property that the number of tree components in $F$, and therefore also the number of roots, is small compared to $\numberVertices{F}$, we will show that the degrees of the roots are typically much smaller than $\maxdegree{F}$ (see \Cref{thm:forest_max_degree}\ref{thm:forest_max_degree2}). Together with the fact that the number of \lq additional\rq\ edges connecting the roots is \lq small\rq, this will yield $\maxdegree{L}=\maxdegree{F}$. Then the two-point concentration of $\maxdegree{L}$ will follow by analysing $\maxdegree{F}$ via the balls-into-bins model and Prüfer sequences (see \Cref{sec:forests}). In the following sections we will describe these ideas in more detail. In \Cref{sub:strategy_decomposition} we will use a graph decomposition and conditional random graphs to make the aforementioned structural result on $\planargraph$ more formal. Subsequently, we determine the maximum degrees of $F$ and $G(n,m)$ in \Cref{sub:strategy_complex_part,sub:strategy_without_complex}, respectively. 

\begin{figure}[t]
		\begin{tikzpicture}[scale=0.8, line width=0.6mm, every node/.style={circle,fill=lightgray, inner sep=0, minimum size=0.23cm}]

	\node (A) at (0,0) [draw,rectangle] {};
	\node (B) at (0.5,-0.95) [draw,rectangle] {};
	\node (C) at (1.5,-1.4) [draw,rectangle] {};
	\node (D) at (2.5,-0.95) [draw,rectangle] {};
	\node (E) at (3,0) [draw,rectangle] {};
	\node (F) at (2.5,0.95) [draw,rectangle] {};
	\node (G) at (1.5,1.4) [draw,rectangle] {};
	\node (H) at (0.5,0.95) [draw,rectangle] {};
	
	\node (I) at (4.2,0) [draw,rectangle] {};
	\node (J) at (5,1) [draw,rectangle] {};
	\node (K) at (6.2,0.7) [draw,rectangle] {};
	\node (L) at (6.2,-0.7) [draw,rectangle] {};
	\node (M) at (5,-1) [draw,rectangle] {};
		
	\node (N) at (5.8,-1.7) [draw] {};
	\node (O) at (5,-1.9) [draw] {};
	\node (P) at (6.6,-1.3) [draw] {};
	\node (Q) at (7.4,-1.1) [draw] {};
	\node (R) at (6.5,-2.3) [draw] {};
	\node (S) at (8.1,-0.7) [draw] {};
	\node (T) at (8.8,-0.4) [draw] {};
	\node (U) at (7.1,-1.8) [draw] {};
	\node (V) at (7.8,-1.8) [draw] {};
	\node (W) at (5.5,-2.7) [draw] {};
	\node (X) at (7.4,-2.5) [draw] {};
	\node (Y) at (8.5,-1.6) [draw] {};
	\node (Z) at (8.3,-2.3) [draw] {};
	\draw[-] (M) to (N);
	\draw[-] (M) to (O);
	\draw[-] (N) to (P);
	\draw[-] (P) to (Q);
	\draw[-] (N) to (R);
	\draw[-] (Q) to (S);
	\draw[-] (S) to (T);
	\draw[-] (P) to (U);
	\draw[-] (U) to (V);
	\draw[-] (U) to (X);
	\draw[-] (O) to (W);
	\draw[-] (V) to (Y);
	\draw[-] (V) to (Z);
	
	\node (1) at (2.7,-1.9) [draw] {};
	\node (2) at (3.3,-1.3) [draw] {};
	\node (3) at (3.3,-2.1) [draw] {};
	\node (4) at (4,-1.3) [draw] {};
	\node (5) at (3.9,-1.8) [draw] {};
	\node (6) at (4.5,-2.3) [draw] {};
	\draw[-] (D) to (1);
	\draw[-] (D) to (2);
	\draw[-] (2) to (3);
	\draw[-] (2) to (4);
	\draw[-] (2) to (5);
	\draw[-] (5) to (6);
	
	\node (7) at (7,1.1) [draw] {};
	\node (8) at (7,0.35) [draw] {};
	\node (9) at (7.7,1.1) [draw] {};
	\draw[-] (K) to (7);
	\draw[-] (K) to (8);
	\draw[-] (7) to (9);
	
	\node (10) at (0.6,-1.5) [draw] {};
	\node (11) at (-0.3,-1.5) [draw] {};
	\node (12) at (-1,-1.1) [draw] {};
	\node (13) at (-1,-1.8) [draw] {};
	\node (14) at (-1.7,-2) [draw] {};
	\node (15) at (-1.7,-1.5) [draw] {};
	\node (16) at (-1.7,-0.7) [draw] {};
	\node (17) at (-1.7,-1.1) [draw] {};
	\node (18) at (-2.2,-0.5) [draw] {};
	\node (19) at (-2.2,-1.1) [draw] {};
	
	\draw[-] (C) to (10);
	\draw[-] (10) to (11);
	\draw[-] (11) to (12);
	\draw[-] (11) to (13);
	\draw[-] (13) to (14);
	\draw[-] (12) to (15);
	\draw[-] (12) to (16);
	\draw[-] (12) to (17);
	\draw[-] (16) to (18);
	\draw[-] (17) to (19);
	
	\node (20) at (4.4,1.2) [draw] {};
	\node (21) at (5.6,1.2) [draw] {};
	\draw[-] (J) to (20);
	\draw[-] (J) to (21);
	
	\node (22) at (-0.7,0.3) [draw] {};
	\node (23) at (-1.3,0.5) [draw] {};
	\draw[-] (A) to (22);
	\draw[-] (22) to (23);
	
	\node (24) at (3.2,1.2) [draw] {};
	\draw[-] (F) to (24);
		
	\draw[-,thin] (A) to (B);
	\draw[-,thin] (B) to (C);
	\draw[-,thin] (C) to (D);
	\draw[-,thin] (D) to (E);
	\draw[-,thin] (E) to (F);
	\draw[-,thin] (F) to (G);
	\draw[-,thin] (G) to (H);
	\draw[-,thin] (H) to (A);
	\draw[-,thin] (C) to (H);
	\draw[-,thin] (I) to (J);
	\draw[-,thin] (J) to (K);
	\draw[-,thin] (K) to (L);
	\draw[-,thin] (L) to (M);
	\draw[-,thin] (M) to (I);
	\draw[-,thin] (E) to (I);
	
	\draw[-, thin] (-2.5,-3) to (-2.5,-3.1);
	\draw[-, thin] (-2.5,-3.1) to (9,-3.1);
	\draw[-, thin] (9,-3) to (9,-3.1);
	\node () at (3.75,-3.5) [draw=none, rectangle, fill=white] {$L$};
	
	\node (25) at (11.5,1.4) [draw] {};
	\node (26) at (11.5,0.8) [draw] {};
	\node (27) at (11.5,0.2) [draw] {};
	\node (28) at (11.8,-0.4) [draw] {};
	\node (29) at (11.1,-0.4) [draw] {};
	\node (30) at (11.1,-1) [draw] {};
	\node (31) at (10.8,-1.6) [draw] {};
	\node (32) at (11.4,-1.6) [draw] {};
	
	\draw[-] (25) to (26);
	\draw[-] (26) to (27);
	\draw[-] (27) to (28);
	\draw[-] (27) to (29);
	\draw[-] (29) to (30);
	\draw[-] (30) to (31);
	\draw[-] (30) to (32);
	
	\node (33) at (13,1.4) [draw] {};
	\node (34) at (12.6,0.8) [draw] {};
	\node (35) at (13.4,0.8) [draw] {};
	\node (36) at (13.1,0.2) [draw] {};
	\node (37) at (13.7,0.2) [draw] {};
	\node (38) at (13.1,-0.4) [draw] {};
	\draw[-] (33) to (34);
	\draw[-] (33) to (35);
	\draw[-] (35) to (36);
	\draw[-] (35) to (37);
	\draw[-] (36) to (38);
	
	\node (39) at (12.5,-1.2) [draw] {};
	\node (40) at (13.1,-1.2) [draw] {};
	\node (41) at (13.7,-0.8) [draw] {};
	\node (42) at (13.7,-1.6) [draw] {};
	\draw[-] (39) to (40);
	\draw[-] (40) to (41);
	\draw[-] (40) to (42);
	
	\node (43) at (14.4,1.3) [draw] {};
	\node (44) at (15.1,1.3) [draw] {};
	\draw[-] (43) to (44);
	
	\node (45) at (14.4,0.6) [draw] {};
	\node (46) at (15.1,0.6) [draw] {};
	\draw[-] (45) to (46);
	
	\node (47) at (14.4,-0.1) [draw] {};
	\node (48) at (15.1,-0.1) [draw] {};
	\draw[-] (47) to (48);
	
	\node (49) at (14.4,-0.8) [draw] {};
	\node (50) at (15.1,-0.8) [draw] {};
	\draw[-] (49) to (50);
	
	\node (51) at (14.4,-1.5) [draw] {};
	\node (52) at (15.1,-1.5) [draw] {};
	\draw[-] (51) to (52);
	
	\node (53) at (16.1,1.4) [draw] {};
	\node (54) at (16.9,1.4) [draw] {};
	\node (55) at (16.1,0.8) [draw] {};
	\node (56) at (16.9,0.8) [draw] {};
	\node (57) at (16.1,0.2) [draw] {};
	\node (58) at (16.9,0.2) [draw] {};
	\node (59) at (16.1,-0.4) [draw] {};
	\node (60) at (16.9,-0.4) [draw] {};
	\node (61) at (16.1,-1) [draw] {};
	\node (62) at (16.9,-1) [draw] {};
	\node (63) at (16.1,-1.6) [draw] {};
	\node (64) at (16.9,-1.6) [draw] {};
	
	\draw[-, thin] (10.5,-2.1) to (10.5,-2.2);
	\draw[-, thin] (10.5,-2.2) to (17.2,-2.2);
	\draw[-, thin] (17.2,-2.1) to (17.2,-2.2);
	\node () at (13.75,-2.7) [draw=none, fill=white, rectangle] {$R$};	
\end{tikzpicture}
\caption{Typical structure of $\planargraph=\planargraph(n,m)$ when $m$ is as in \Cref{thm:main}: The largest component $\Largestcomponent=\largestcomponent{\planargraph}$ consists of a family of rooted tree components, which are connected via \lq few\rq\ edges (drawn with thin lines) between the roots (square boxes). The remaining part $\Rest=\planargraph\setminus\Largestcomponent$ \lq behaves\rq\ like an \ER\ random graph $G(\tilde{n}, \tilde{n}/2)$ for a suitable $\tilde{n}=\tilde{n}(n)$.}
\label{fig:typical_structure}
\end{figure}
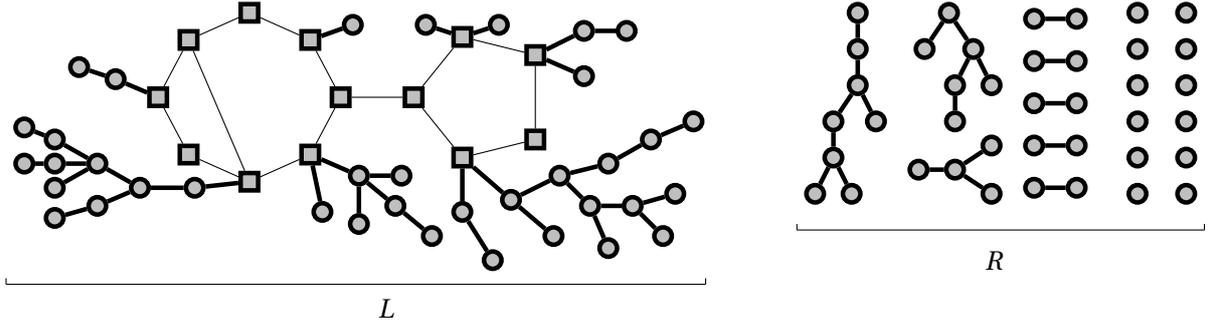

\subsection{Decomposition and conditional random graphs}\label{sub:strategy_decomposition}
Instead of considering $\Largestcomponent$ and $\Rest$ directly, we will actually split the random planar graph $\planargraph$ into the large complex part $\Complexlargestcore=\complexlargestcore{\planargraph}$, the small complex part $\Complexrestcore=\complexrestcore{\planargraph}$, and the non-complex part $\Restcomplex=\restcomplex{\planargraph}$ (see \Cref{sub:decomposition} for a formal definition of $\Complexlargestcore$, $\Complexrestcore$, and $\Restcomplex$). We then use the fact that by \Cref{thm:internal_structure}\ref{thm:internal_structure3} we have \whp\
\begin{align}\label{eq:37}
\Largestcomponent=\Complexlargestcore,
\end{align}
which also implies that \whp\ $\Rest=\Complexrestcore \cup \Restcomplex$. In order to analyse $\Complexlargestcore$, $\Complexrestcore$, and $\Restcomplex$, we will use the concept of conditional random graphs (see \Cref{sub:conditional_random_graphs}): For given $\lambda, \sigma\in \N$ and a core $C$, we denote by $\tilde{\planargraph}$ the random planar graph $\planargraph$ conditioned on the event that $\core{\planargraph}=C$, $\numberVertices{\complexlargestcore{\planargraph}}=\lambda$, and $\numberVertices{\complexrestcore{\planargraph}}=\sigma$. By \Cref{rem:connection_conditional} we have
\begin{align}
\complexlargestcore{\tilde{\planargraph}}&\sim\complexgraph\left(\largestcomponent{C},\lambda\right),\label{eq:25}\\
\complexrestcore{\tilde{\planargraph}}&\sim\complexgraph\left(\rest{C},\sigma\right),\label{eq:26}\\
\restcomplex{\tilde{\planargraph}}&\sim\nocomplex(u,w),\label{eq:27}
\end{align}
where the random graphs on the right hand side are as defined in \Cref{def:random_complex_part,def:nocomplex}, $\largestcomponent{C}$ the largest component of $C$, $\rest{C}=C\setminus\largestcomponent{C}$, $u:=n-\lambda-\sigma$, and $w:=m-\numberEdges{C}+\numberVertices{C}-\lambda-\sigma$. 

Roughly speaking, there is the following elementary but useful relation between the \lq conditional\rq\ random graph $\tilde{\planargraph}$ and the original random planar graph $\planargraph$ (see \Cref{lem:conditional_random_graphs}): If for all \lq typical\rq\ choices of $C$, $\lambda$, and $\sigma$ \whp\ a graph property holds in $\tilde{\planargraph}$, then \whp\ this property holds in $\planargraph$. In order to determine what \lq typical\rq\ choices of $C$, $\lambda$, and $\sigma$ are, we use known results on the internal structure of $\planargraph$ (see \Cref{thm:internal_structure}). For example, if we know that \whp\ the core $\core{\planargraph}$ satisfies a certain structure, e.g. the maximum degree is bounded or the number of vertices lies in a certain interval, then typical choices of $C$ are those cores having this structure.

Due to this relation between $\planargraph$ and $\tilde{\planargraph}$ and (\ref{eq:25})--(\ref{eq:27}) it suffices to consider the random graphs $\complexgraph\left(C,q\right)$ and $\nocomplex(n,m)$ for fixed values of $C$, $q$, $n$, and $m$. We will see that if we consider $\nocomplex(n,m)$, then we always have $m=n/2+\bigo{n^{2/3}}$. It is well known that in this regime the \ER\ random graph $G(n,m)$ has with positive probability no complex components (see \Cref{thm:non_complex}). Hence, we can consider $\maxdegree{G(n,m)}$ instead of $\maxdegree{\nocomplex(n,m)}$, which we will do in \Cref{sub:strategy_without_complex}. Furthermore, in \Cref{sub:strategy_complex_part} we will study $\complexgraph\left(C,q\right)$ by using the balls-into-bins model.

We emphasize that the decomposition $\planargraph=\Complexlargestcore~\dot{\cup}~\Complexrestcore~\dot{\cup}~\Restcomplex$ describes the structure of $\planargraph$ as stated at the beginning of \Cref{sec:strategy} and illustrated in \Cref{fig:typical_structure}: By (\ref{eq:37}) the large complex part $\Complexlargestcore$ corresponds to the largest component $\Largestcomponent=\largestcomponent{\planargraph}$. Using (\ref{eq:25}) this implies that the asymptotic behaviour of $\Largestcomponent$ can be deduced by that of $\complexgraph\left(C,q\right)$ for a suitable core $C$ and $q\in \N$. The random graph $\complexgraph\left(C,q\right)$ can be constructed by replacing each vertex of $C$ randomly by a rooted tree component such that a complex graph with $q$ vertices is obtained. Furthermore, in our applications $\maxdegree{C}$ will be bounded and $\numberVertices{C}$ will be \lq small\rq\ compared to $q$ (see \Cref{thm:internal_structure}\ref{thm:internal_structure1} and \ref{thm:internal_structure2}). This implies that $\complexgraph\left(C,q\right)$, and therefore also $\Largestcomponent$, consists of a family of rooted tree components (containing the edges not lying in $C$), which are connected via \lq few\rq\ additional edges (which are the edges lying in $C$). For the structure of the remaining part $\Rest=\planargraph\setminus\Largestcomponent$ we observe that $\Rest$ corresponds to $\Complexrestcore \cup \Restcomplex$ (see (\ref{eq:37})). Combining the facts that $\numberVertices{\Complexrestcore}$ will be \lq small\rq\ compared to $\numberVertices{\Restcomplex}$ and $\numberEdges{\Restcomplex}\approx \numberVertices{\Restcomplex}/2$ (see \Cref{thm:internal_structure}\ref{thm:internal_structure5} and \ref{thm:internal_structure7}) with (\ref{eq:27}), we obtain that $\Rest$ is closely related to $\nocomplex(\tilde{n},\tilde{n}/2)$, and therefore also to $G(\tilde{n},\tilde{n}/2)$, for a suitable $\tilde{n}\in\N$.

\subsection{Random complex part and forests with specified roots}\label{sub:strategy_complex_part}
Let $C$ be a core (on vertex set $[\numberVertices{C}]$) and $q\in \N$. In \Cref{def:random_complex_part} we denoted by $\complexgraph\left(C,q\right)$ a graph chosen uniformly at random from the family $\complexclass(C,q)$ of all complex graphs with core $C$ and vertex set $[q]$. Moreover, we let $\forestclass(n, \ntrees)$ be the class of forests on vertex set $[n]$ consisting of $\ntrees$ tree components such that each vertex from $[\ntrees]$ lies in a different tree component. The elements in $\forestclass(n, \ntrees)$ are called {\em forests with specified roots} and the vertices in $[\ntrees]$ {\em roots}. For simplicity, we will often just write forest instead of forest with specified roots. We can construct $\complexgraph=\complexgraph\left(C,q\right)$ by choosing a random forest $\forest=\forest(q, \numberVertices{C})\ur\forestclass(q, \numberVertices{C})$ and replacing each vertex $v$ in $C$ by the tree component with root $v$. For the degrees of vertices in $\complexgraph$ we obtain $\degree{v}{\complexgraph}=\degree{v}{C}+\degree{v}{\forest}$ for $v\in C$ and $\degree{v}{\complexgraph}=\degree{v}{\forest}$ otherwise. In our applications we will have that $\maxdegree{C}$ is bounded and $\numberVertices{C}=\bigo{q^{1-\beta}}$ for some $\beta>0$, i.e. $\numberVertices{C}$ is \lq small\rq\ compared to $q$ (see \Cref{thm:internal_structure}\ref{thm:internal_structure1}, \ref{thm:internal_structure2}, and \ref{thm:internal_structure4}). This will imply that \whp\ $\maxdegree{\complexgraph}=\maxdegree{\forest}$ (see \Cref{thm:random_complex_part}). 

In order to determine $\maxdegree{\forest}$, we will introduce a bijection between $\forestclass(n, \ntrees)$ and $\sequences{n}{\ntrees}:=[n]^{n-\ntrees-1}\times [\ntrees]$ similar to Prüfer sequences for trees (see \Cref{sub:pruefer}). Given a forest $\forest\in \forestclass(n,\ntrees)$ we recursively delete the leaf, i.e. a vertex with degree one, with largest label and thereby build a sequence by noting the unique neighbours of the leaves. We will show in \Cref{thm:pruefer} that this is indeed a bijection and that the degree of a vertex $v$ is determined by the number of occurrences of $v$ in the sequence (see (\ref{eq:12})). It is straightforward to construct a random element from $\sequences{n}{\ntrees}$ by a balls-into-bins model such that the load of a bin equals the number of occurrences in the sequence of the corresponding element. Thus, we will derive the concentration of the maximum degree $\maxdegree{\forest}$ from a concentration result on the maximum load. We refer to \Cref{fig:construction_complex_part} for an illustration of the construction of $\complexgraph\left(C,q\right)$ via the random forest $\forest$ and the balls-into-bins model.

\subsection{\ER\ random graph and the balls-into-bins model}\label{sub:strategy_without_complex}
Given $n$ bins $\bin_1, \ldots, \bin_\nbins$ and $2m$ balls $\ball_1, \ldots, \ball_{2m}$ we denote by $\locationBit_i$ the index of the bin to which the $i$-th ball $\ball_i$ is assigned for each $i\in[2m]$. We will consider the random multigraph $\multigraph$ with $\vertexSet{\multigraph}=[n]$ and $\edgeSet{\multigraph}=\setbuilder{\left\{\locationBit_{2i-1}, \locationBit_{2i}\right\}}{i \in [m]}$ (see also \Cref{fig:Gnm_balls_bins}). We will see that conditioned on $\multigraph$ being simple, $\multigraph$ is distributed like $G(n,m)$. Furthermore, we will show that as long as $m=\bigo{n}$, with positive probability $\multigraph$ is simple. Hence, the concentration of $\maxdegree{G(n,m)}$ will follow by the concentration of the maximum load of a bin (see \Cref{thm:concentration_balls_bins}).

\begin{figure}[t]
	\begin{tikzpicture}[scale=1.12, very thick, every node/.style={circle,draw,fill=lightgray, minimum size=0.5cm, inner sep=0pt}]	
		\draw[-] (0,2) to (0,0);
		\draw[-] (-0.02,0) to (1.22,0);
		\draw[-] (1.2,0) to (1.2,2);
		\node () at (0.6,-0.35) [rectangle] {\large \textbf{1}};
		
		\draw[-] (1.8,2) to (1.8,0);
		\draw[-] (1.78,0) to (3.02,0);
		\draw[-] (3,0) to (3,2);
		\node () at (2.4,-0.35) [rectangle] {\large \textbf{2}};
		
		\draw[-] (3.6,2) to (3.6,0);
		\draw[-] (3.58,0) to (4.82,0);
		\draw[-] (4.8,0) to (4.8,2);
		\node () at (4.2,-0.35) [rectangle] {\large \textbf{3}};
		
		\draw[-] (5.4,2) to (5.4,0);
		\draw[-] (5.38,0) to (6.62,0);
		\draw[-] (6.6,0) to (6.6,2);
		\node () at (6,-0.35) [rectangle] {\large \textbf{4}};
		
		\draw[-] (7.2,2) to (7.2,0);
		\draw[-] (7.18,0) to (8.42,0);
		\draw[-] (8.4,0) to (8.4,2);
		\node () at (7.8,-0.35) [rectangle] {\large \textbf{5}};
		
		\node (1) at (11.5,-0.2) [rectangle] {\large \textbf{1}};
		\node (2) at (12.53,0.52) [rectangle] {\large \textbf{2}};
		\node (3) at (12.14,1.7) [rectangle] {\large \textbf{3}};
		\node (4) at (10.86,1.7) [rectangle] {\large \textbf{4}};
		\node (5) at (10.47,0.52) [rectangle] {\large \textbf{5}};
		
		\node () at (4.6,-1) [rectangle,fill=none,draw=none] {(a) balls-into-bins experiment};
		\node () at (11.5,-1) [rectangle,fill=none,draw=none] {(b) multigraph $M$};
		
		\draw[-,dashed] (9.5,-1.1) to (9.5,2.15);

		\node () at (7.8,0.75) [] {\textbf{1}};
		\node () at (3.92,0.3) [] {\textbf{2}};
		\node () at (7.52,0.3) [dashed] {\textbf{3}};
		\node () at (0.6,0.3) [dashed] {\textbf{4}};
		
		\node () at (2.68,0.3) [dotted] {\textbf{5}};
		\node () at (8.08,0.3) [dotted] {\textbf{6}};
		\node () at (2.12,0.3) [loosely dotted] {\textbf{7}};
		\node () at (4.48,0.3) [loosely dotted] {\textbf{8}};
		\draw[-,dotted] (2) to (5);	
		\draw[-,dashed] (1) to (5);
		\draw[-] (3) to (5);
		\draw[-,loosely dotted] (2) to (3);
				
	\end{tikzpicture}
	\caption{Construction of a multigraph $M$ via the balls-into-bins model. In (a) we have $n=5$ bins, $2m=8$ balls, and $A_1=5, A_2=3, A_3=5, \ldots, A_8=3$. In (b) this results in a multigraph $M$ with $\vertexSet{M}=[5]$ and $\edgeSet{M}=\left\{\{5,3\}, \{5,1\}, \{2,5\}, \{2,3\}\right\}$. The maximum load of a bin ($=3$) corresponds to the maximum degree $\maxdegree{M}=3$ of $M$.}
	\label{fig:Gnm_balls_bins}
\end{figure}
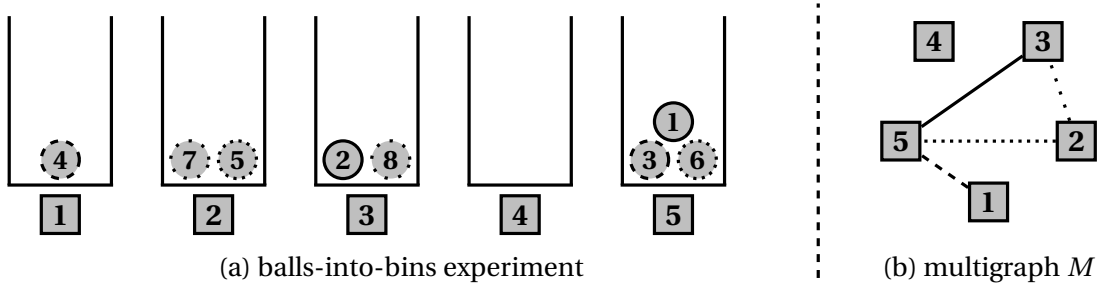

\subsection{Double counting}\label{sub:strategy_dense}
To prove \Cref{thm:main_dense}, we will combine results on the asymptotic number of planar graphs from \cite{gim} (see \Cref{thm:number_planar}) and a double counting argument (see \Cref{lem:isolated}) and deduce that for all fixed $\niv,\nie\in\N$ we have 
\begin{align}\label{eq:33}
	\liminf_{n\to \infty}~ \prob{P \text{ has }\niv \text{ isolated vertices and }\nie \text{ isolated edges}}>0,
\end{align}
where we call a vertex isolated if it has degree zero and say that an edge is isolated if both endpoints have degree one. Then we introduce an operation that uses an isolated vertex and two isolated edges to increase the maximum degree of a graph by one (see \Cref{fig:double_counting}). Starting with a graph that has \lq many\rq\ isolated vertices and isolated edges, we can repeatedly apply this operation to create lots of graphs with various maximum degrees (see \Cref{lem:increase_max_degree}). Together with (\ref{eq:33}) this will imply that also $\maxdegree{\planargraph}$ takes \lq many\rq\ different values.

\section{Balls into bins}\label{sec:balls_bins}
Balls-into-bins models have been extensively studied in the literature (see e.g. \cite{johnson_kotz, mitzenmacher_upfal}). Throughout the paper, we will use the following model. Given $\nbins$ bins $\bin_1, \ldots, \bin_\nbins$ we sequentially assign $\nballs$ balls $\ball_1, \ldots, \ball_\nballs$ to those $\nbins$ bins by choosing a bin for each ball, independently and uniformly at random. Let $\location=\left(\locationBit_1, \ldots, \locationBit_\nballs\right)$ be the location vector, i.e. $\locationBit_i$ is the index of the bin to which the $i$-th ball $\ball_i$ is assigned. For each $j\in [\nbins]$ we call the number of balls in the $j$-th bin $\bin_j$ the {\em load} $\load_j=\load_j(\location)$. We write $\loadvector=\loadvector(\location)=\left(\load_1, \ldots, \load_\nbins\right)$ for the vector of all loads and denote by $\maxload=\maxload(\location)=\max_{j \in [\nbins]}\load_j$ the maximum load in a bin. For $\ntrees\in[\nbins]$ we let $\maxload_{ \ntrees}=\maxload_{\ntrees}(\location)=\max_{j \in [\ntrees]}\load_j$ be the maximum load among the first $\ntrees$ bins $\bin_1, \ldots, \bin_\ntrees$. We write $\binsandballs{\nbins}{\nballs}$ for a random vector distributed like the location vector $\location$ of a balls-into-bins experiment with $\nbins$ bins and $\nballs$ balls, denoted by 
\begin{align*}
\location\sim \binsandballs{\nbins}{\nballs}
\end{align*}
and $\maxbinsandballs{\nbins}{\nballs}$ for a random variable distributed like the maximum load $\maxload$, which we denote by
\begin{align*}
\maxload=\maxload(\location)\sim \maxbinsandballs{\nbins}{\nballs}.
\end{align*}

Gonnet \cite{gonnet} proved in the case $\nbins=\nballs$ that \whp\ $\maxbinsandballs{\nbins}{\nbins}=\left(1+\smallo{1}\right)\log n/\log\log n$. Later Raab and Steger \cite{balls_bins1} considered $\maxbinsandballs{\nbins}{\nballs}$ for different ranges of $\nballs$. Amongst other results, they showed that \whp\ $\maxbinsandballs{\nbins}{\nballs}=\left(1+\smallo{1}\right)\log n/\log\log n$ is still true, as long as $\nballs=\Th{\nbins}$. In the following we refine their result, showing that if $\nballs=\bigo{\nbins}$, then \whp\ $\maxbinsandballs{\nbins}{\nballs}$ is actually concentrated at two values.

Before proving that rigorously, we motivate this result by providing the following heuristic. For $l=l(n)\in\N$ we let $X^{(l)}$ be the number of bins with load $l$. We have 
\begin{align}\label{eq:28}
	\expec{X^{(l)}}=\nbins\binom{\nballs}{l}\left(1/n\right)^l\left(1-1/n\right)^{\nballs-l}=:\mu(l).
\end{align}
We expect that the load $l$ of a bin is much smaller than $\nballs$ and therefore we have
\begin{align*}
	\mu(l)=\Th{1}k^le^ll^{-l-1/2}n^{-l+1}.
\end{align*}
Intuitively, the maximum load $\maxload\sim \maxbinsandballs{\nbins}{\nballs}$ should be close to the largest $l$ for which $\mu(l)=\Th{1}$ is satisfied, in other words, $\log \left(\mu(\maxload)\right)$ should be close to 0. This motivates the definition of $\concentration{\nbins}{\nballs}$ in \Cref{def:nu} as the unique positive zero of the function
\begin{align*}
	f(l)=f_{\nbins, \nballs}(l):=l\log k+l-(l+1/2)\log l-(l-1)\log n,
\end{align*}
which is asymptotically equal to $\log\left(\mu(l)\right)$ up to an additive constant. We will use the first and second moment method (see e.g. \cite{probmethod,rg1}) to make that heuristic rigorous and show that the maximum load $\maxload\sim \maxbinsandballs{\nbins}{\nballs}$ is strongly concentrated around $\concentration{\nbins}{\nballs}$.

\begin{thm}\label{thm:concentration_balls_bins}
If $\nballs=\nballs(\nbins)=\bigo{\nbins}$ and $\varepsilon>0$, then \whp
\begin{align*}
\rounddown{\concentration{\nbins}{\nballs}-\varepsilon}\lessorequal \maxbinsandballs{\nbins}{\nballs}\lessorequal \rounddown{\concentration{\nbins}{\nballs}+\varepsilon}.
\end{align*}
\end{thm}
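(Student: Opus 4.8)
The plan is to apply the first and second moment method to the random variables $X^{(l)}$ counting the bins of load exactly $l$ in a balls-into-bins experiment $\location\sim\binsandballs{\nbins}{\nballs}$. First I would establish, via a routine Stirling estimate for $\mu(l):=\expec{X^{(l)}}=\nbins\binom{\nballs}{l}n^{-l}(1-1/n)^{\nballs-l}$ from \eqref{eq:28}, that $\log\mu(l)=f_{\nbins,\nballs}(l)+\bigo{1}$ uniformly over the range $l=\bigo{\concentration{\nbins}{\nballs}}$ that matters below; here one uses that $\nballs=\bigo{\nbins}$ (so that $(1-1/n)^{\nballs-l}=\Th{1}$) together with \Cref{lem:nu}\ref{lem:nu8}, which ensures that such $l$ are $\smallo{\nballs}$ whenever $\nballs\to\infty$. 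Since $f_{\nbins,\nballs}$ vanishes at $\concentration{\nbins}{\nballs}$ by \Cref{def:nu}, this gives $\mu\big(\concentration{\nbins}{\nballs}\big)=\Th{1}$, where $\mu$ is extended to $\R^+$ through the Gamma function.

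The crucial second step is to quantify how sharply $\mu$ decays away from its peak at $\concentration{\nbins}{\nballs}$. For this I would use the identity $\mu(l+1)/\mu(l)=(\nballs-l)/((l+1)(\nbins-1))$, which holds for the Gamma extension as well: for $l$ within bounded distance of $\concentration{\nbins}{\nballs}$ one has $l=\Th{\concentration{\nbins}{\nballs}}$ (or $l=\Th{1}$), so the ratio is $\bigo{\nballs/(\nbins\concentration{\nbins}{\nballs})}=\smallo{1}$ by \Cref{lem:nu}\ref{lem:nu6}. Hence $\mu$ shrinks by a factor tending to infinity over any window of bounded length around $\concentration{\nbins}{\nballs}$, and combining this with $\mu\big(\concentration{\nbins}{\nballs}\big)=\Th{1}$ I would deduce
\begin{align*}
\mu\big(\rounddown{\concentration{\nbins}{\nballs}-\varepsilon}\big)\to\infty
\qquad\text{and}\qquad
\sum_{l\ge\rounddown{\concentration{\nbins}{\nballs}+\varepsilon}+1}\mu(l)=\big(1+\smallo{1}\big)\,\mu\big(\rounddown{\concentration{\nbins}{\nballs}+\varepsilon}+1\big)=\smallo{1},
\end{align*}
the last equality because the tail is dominated by a geometric series with ratio $\smallo{1}$. (We may assume $\varepsilon<1$, and since the lower bound is trivial when $\rounddown{\concentration{\nbins}{\nballs}-\varepsilon}\le 1$, we may also assume $\concentration{\nbins}{\nballs}$ is bounded away from $1$.)

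The upper bound $\maxbinsandballs{\nbins}{\nballs}\le\rounddown{\concentration{\nbins}{\nballs}+\varepsilon}$ then holds \whp\ by Markov's inequality applied to the number of bins of load at least $\rounddown{\concentration{\nbins}{\nballs}+\varepsilon}+1$, whose expectation is the $\smallo{1}$ tail sum above. For the lower bound I would put $l^-:=\rounddown{\concentration{\nbins}{\nballs}-\varepsilon}$ and apply Chebyshev's inequality to $X^{(l^-)}$. A direct computation of the second factorial moment (counting ordered pairs of distinct bins each of load $l^-$) gives
\begin{align*}
\frac{\expec{X^{(l^-)}\big(X^{(l^-)}-1\big)}}{\mu(l^-)^2}=\frac{\nbins-1}{\nbins}\cdot\frac{\binom{\nballs-l^-}{l^-}}{\binom{\nballs}{l^-}}\cdot\frac{(1-2/\nbins)^{\nballs-2l^-}}{(1-1/\nbins)^{2(\nballs-l^-)}}\le(1-1/\nbins)^{-2l^-}=1+\smallo{1},
\end{align*}
using $\binom{\nballs-l^-}{l^-}\le\binom{\nballs}{l^-}$, the elementary inequality $1-2/\nbins\le(1-1/\nbins)^2$, and $l^-=\smallo{\nbins}$. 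Hence $\variance{X^{(l^-)}}=\smallo{\mu(l^-)^2}+\mu(l^-)$, and since $\mu(l^-)\to\infty$ we obtain $\prob{X^{(l^-)}=0}\le\smallo{1}+1/\mu(l^-)\to0$; so \whp\ some bin has load exactly $l^-$, giving $\maxbinsandballs{\nbins}{\nballs}\ge\rounddown{\concentration{\nbins}{\nballs}-\varepsilon}$.

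I expect the main obstacle to be the second step: proving that $\log\mu$ (equivalently $f_{\nbins,\nballs}$) changes by an amount tending to infinity over a window of bounded length around its unique zero $\concentration{\nbins}{\nballs}$, uniformly in $\nbins$ and $\nballs=\nballs(\nbins)$. This is exactly where the non-trivial property $\concentration{\nbins}{\nballs}=\smallomega{\nballs/\nbins}$ from \Cref{lem:nu}\ref{lem:nu6} is used. Some additional care is needed in degenerate regimes — for instance when $\nballs$ grows sub-polynomially, or when $\concentration{\nbins}{\nballs}$ is close to a small integer — where the concentration interval may be essentially trivial (one bin always has load at least $1$ when $\nballs\ge1$) or the Stirling estimate for $\mu(l)$ must be supplemented with the trivial bound $\load_j\le\nballs$.
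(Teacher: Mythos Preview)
Your proposal is correct and follows essentially the same approach as the paper: first and second moment method on $X^{(l)}$, with the ratio $\mu(l+1)/\mu(l)=\smallo{1}$ driven by \Cref{lem:nu}\ref{lem:nu6}. The only notable differences are cosmetic---the paper handles the degenerate regime by explicitly splitting off the case $\nballs\le\nbins^{1/3}$ (where \whp\ $\maxload=1$ and $1<\concentration{\nbins}{\nballs}\le 7/4$) rather than leaving it as a remark, and it computes the second moment via conditioning on $X_i=1$, whereas your direct bound on the second factorial moment is in fact slightly cleaner.
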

\begin{proof}
Let $\location\sim \binsandballs{\nbins}{\nballs}$ be the location vector, $\load_j=\load_j(\location)$ the load of bin $\bin_j$ for each $j\in[\nbins]$, and $\maxload=\maxload(\location)\sim \maxbinsandballs{\nbins}{\nballs}$ the maximum load. First we consider the case $\nballs\leq \nbins^{1/3}$. Then we have
\begin{align}\label{eq:18}
	\prob{\maxload=1}=\prod_{i=1}^{\nballs-1}\left(1-\frac{i}{\nbins}\right)\greaterorequal \left(1-\frac{\nballs}{\nbins}\right)^\nballs=1-\smallo{1}.
\end{align}
Due to \Cref{lem:nu}\ref{lem:nu5} and \ref{lem:nu7} we have $1<\concentration{\nbins}{\nballs}\leq 7/4$ for $\nbins$ large enough. Together with (\ref{eq:18}) this shows the statement for the case $\nballs\leq \nbins^{1/3}$. Hence, it remains to consider the case $\nballs>\nbins^{1/3}$. For $l\in[\nballs]$ and $j\in[\nbins]$ we let $X_j^{(l)}=1$ if $\load_j=l$, i.e. the number $\load_j$ of balls (among $\nballs$ balls) in the $j$-th bin $\bin_j$ is equal to $l$, and $X_j^{(l)}=0$ otherwise. In addition, we let $X^{(l)}=\sum_{j=1}^{\nbins}X_j^{(l)}$ be the number of bins with load $l$. Then we have 
$\prob{X_j^{(l)}=1}=\binom{\nballs}{l}\left(1/n\right)^l\left(1-1/n\right)^{\nballs-l}$ and obtain (\ref{eq:28}). If $l=\bigo{\nballs^{1/2}}$, then
$\binom{\nballs}{l}=\Th{1}\nballs^le^l/l^{l+1/2}$,
where we used Stirling's formula for $l!$. Hence, we get 
\begin{align}\label{eq:13}
	\mu(l)=\Th{1}\frac{\nballs^l e^l}{l^{l+1/2}n^{l-1}},
\end{align}
because $\left(1-1/n\right)^{\nballs-l}=\Theta(1)$. For an upper bound of the maximum load $\maxload$ we will use the first moment method. Let $l^\ast=l^\ast(\nbins):=\rounddown{\concentration{\nbins}{\nballs}+\varepsilon}+1$ and $\tau=\tau(n):=l^\ast-\concentration{\nbins}{\nballs}\geq\varepsilon$. Due to \Cref{lem:nu}\ref{lem:nu1} and \ref{lem:nu4} and the assumption $\nballs> \nbins^{1/3}$ we have $l^\ast=\bigo{\nballs^{1/2}}$. Thus, equation (\ref{eq:13}) holds for $l=l^\ast$ and by the definition of $\Concentration=\concentration{\nbins}{\nballs}$ we obtain
\begin{align}
	\mu\left(l^\ast\right)=\Th{1}\frac{\nballs^{\Concentration+\tau}e^{\Concentration+\tau}}{\left(\Concentration+\tau\right)^{\Concentration+\tau+1/2}n^{\Concentration+\tau-1}}=\Th{1}\left(\frac{\nballs e}{\nbins\left(\Concentration+\tau\right)}\right)^\tau\left(\frac{\Concentration}{\Concentration+\tau}\right)^{\Concentration+1/2}.
\end{align}
Together with \Cref{lem:nu}\ref{lem:nu6} this yields $\mu\left(l^\ast\right)=\smallo{1}$. Due to \Cref{lem:nu}\ref{lem:nu6} we have $\mu\left(l+1\right)/\mu\left(l\right)=\left(\nballs-l\right)/\left(\left(l+1\right)\left(\nbins-1\right)\right)=o(1)$ for all $l\geq l^\ast$. Hence,
\begin{align*}
	\prob{\maxload\geq l^\ast}\lessorequal \sum_{l\geq l^\ast}\mu(l)=\left(1+\smallo{1}\right)\mu(l^\ast)=o(1).
\end{align*}
For a lower bound, we will show that $\prob{X^{\left(l_\ast\right)}>0}=1-o(1)$, where $l_\ast=l_\ast(\nbins):=\rounddown{\concentration{\nbins}{\nballs}-\varepsilon}$, using the second moment method. In the following we consider the random variables $X_j^{(l)}$ and $X^{(l)}$ only for $l=l_\ast$ and therefore we use $X_j=X_j^{\left(l_\ast\right)}$ and $X=X^{\left(l_\ast\right)}$ for simplicity. In order to apply the second moment method, we will show $\expec{X}=\smallomega{1}$ and $\expec{X_i X_j}=\left(1+\smallo{1}\right)\expec{X_i}\expec{X_j}$ for all $i\neq j$. We let $\rho=\rho(n):=\Concentration-l_\ast\geq\varepsilon$ and by (\ref{eq:13}), \Cref{lem:nu}\ref{lem:nu6}, and the definition of $\Concentration$ we obtain
\begin{align*}
	\mu\left(l_\ast\right)=\Th{1}\frac{\nballs^{\Concentration-\rho}e^{\Concentration-\rho}}{\left(\Concentration-\rho\right)^{\Concentration-\rho+1/2}n^{\Concentration-\rho-1}}=\Th{1}\left(\frac{\nbins\left(\Concentration-\rho\right)}{\nballs e}\right)^\rho\left(\frac{\Concentration}{\Concentration-\rho}\right)^{\Concentration+1/2}=\smallomega{1}.
\end{align*}
Next, we note that conditioned on the event $X_i=1$, i.e. $\load_i=l_\ast$, the loads $\load_j$ for $j\neq i$ are distributed like the loads of a balls-into-bins experiment with $\nbins-1$ bins and $\nballs-l_\ast$ balls, and thus
\begin{align*}
	\condprob{X_j=1}{X_i=1}\equal\binom{\nballs-l_\ast}{l_\ast}\left(1/(\nbins-1)\right)^{l_\ast}\left(1-1/(\nbins-1)\right)^{\nballs-2l_\ast}.
\end{align*}
Hence, we obtain
\begin{align*}
	\frac{\expec{X_i X_j}}{\expec{X_i}\expec{X_j}}\equal\frac{\condprob{X_j=1}{X_i=1}}{\prob{X_j=1}}\equal\frac{\binom{\nballs-l_\ast}{l_\ast}\left(1/(\nbins-1)\right)^{l_\ast}\left(1-1/(\nbins-1)\right)^{\nballs-2l_\ast}}{\binom{\nballs}{l_\ast}\left(1/n\right)^{l_\ast}\left(1-1/n\right)^{\nballs-l_\ast}} \equal1+\smallo{1},
\end{align*}
where we used the assumption $\nballs>\nbins^{1/3}$ and the fact $l_\ast=\bigo{\log n/\log \log n}$ due to \Cref{lem:nu}\ref{lem:nu1} and \ref{lem:nu4}. Thus, by the second moment method we obtain $\prob{X>0}=1-o(1)$, which finishes the proof.
\end{proof}

Next, we show that if we consider a \lq small\rq\ subset of bins, then the maximum load in one of these bins is significantly smaller than the maximum load of all bins. We will use this fact later when we relate random forests to the balls-into-bins model (see \Cref{sec:forests}), in which this \lq small\rq\ subset will correspond to the set of all roots.

\begin{lem}\label{lem:max_load_subset}
Let $\nballs=\nballs(n)$ and $t=t(n) \in \N$ be such that $\nballs=\Th{n}$ and $t= \bigo{n^{1-\beta}}$ for some $\beta>0$. Let $\location\sim\binsandballs{\nbins}{\nballs}$, $\maxload=\maxload(\location)\sim \maxbinsandballs{\nbins}{\nballs}$ be the maximum load, and $\maxload_{t}=\maxload_{t}(\location)$ be the maximum load in one of the first $t$ bins. Then, \whp\ $\maxload-\maxload_{ t}\equal\smallomega{1}$.
\end{lem}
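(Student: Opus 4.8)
The plan is to show that the first $t$ bins are far too few to build up a load anywhere near the global maximum load $\maxload$. Concretely, I will pin down an integer threshold $\ell=\ell(n)$ which is $\smallomega{1}$ below $\concentration{\nbins}{\nballs}$ and for which \whp\ not a single one of the first $t$ bins reaches load $\ell$; combining this with the lower bound on $\maxload$ provided by \Cref{thm:concentration_balls_bins} then yields the assertion.

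First I would bound $\maxload_t$ from above by a crude union bound (no refined estimates are needed here). Since $\nballs=\bigo{\nbins}$, fix a constant $c>0$ with $\nballs\leq c\nbins$ for all $n$. For a single bin $\bin_j$ and any $\ell\in\N$, summing over the $\binom{\nballs}{\ell}$ choices of $\ell$ balls gives $\prob{\load_j\geq\ell}\leq\binom{\nballs}{\ell}\nbins^{-\ell}\leq\left(\nballs e/(\nbins\ell)\right)^{\ell}\leq\left(ce/\ell\right)^{\ell}$, where I used $\binom{\nballs}{\ell}\leq\nballs^{\ell}/\ell!$ and $\ell!\geq(\ell/e)^{\ell}$; a further union bound over $j\in[t]$ then gives $\prob{\maxload_t\geq\ell}\leq t\left(ce/\ell\right)^{\ell}$.

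Next I would choose $\ell=\ell(n):=\rounddown{(1-\beta/3)\log n/\log\log n}$. Then $\ell=\smallomega{1}$ and $\log\ell=(1+\smallo{1})\log\log n$, so $\ell\log\ell=(1-\beta/3+\smallo{1})\log n\geq(1-\beta/2)\log n$ for $n$ large, while $t=\bigo{n^{1-\beta}}$ gives $\log t\leq(1-\beta)\log n+\bigo{1}$. Substituting into $\log\prob{\maxload_t\geq\ell}\leq\log t+\ell\log(ce)-\ell\log\ell$ bounds the right-hand side by $\left(-\beta/2+\smallo{1}\right)\log n$, which tends to $-\infty$; hence \whp\ $\maxload_t\leq\ell-1$. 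On the other hand, \Cref{lem:nu}\ref{lem:nu1} gives $\concentration{\nbins}{\nballs}=(1+\smallo{1})\log n/\log\log n$, so $\concentration{\nbins}{\nballs}-\ell=\left(\beta/3+\smallo{1}\right)\log n/\log\log n=\smallomega{1}$, and applying \Cref{thm:concentration_balls_bins} with $\varepsilon=1/2$ gives \whp\ $\maxload\geq\rounddown{\concentration{\nbins}{\nballs}-1/2}\geq\concentration{\nbins}{\nballs}-3/2$. Intersecting the two \whp-events, \whp\ $\maxload-\maxload_t\geq\left(\concentration{\nbins}{\nballs}-3/2\right)-(\ell-1)=\concentration{\nbins}{\nballs}-\ell-1/2=\smallomega{1}$, as required.

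The only delicate point is the choice of $\ell$: it must be simultaneously large enough that $\ell\log\ell$ beats $\log t\leq(1-\beta)\log n$ (so that the union bound decays) and small enough to stay $\smallomega{1}$ below $\concentration{\nbins}{\nballs}\sim\log n/\log\log n$. The slack factor $1-\beta/3$ is exactly what reconciles these two demands, and — conveniently — nothing beyond the asymptotic order of $\concentration{\nbins}{\nballs}$ from \Cref{lem:nu}\ref{lem:nu1} is required; in particular the finer properties of $\Concentration$ collected in \Cref{lem:nu} are not needed for this lemma.
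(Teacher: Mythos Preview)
Your proof is correct, and it takes a genuinely different route from the paper's argument. The paper first conditions on the total number $S_t$ of balls landing in the first $t$ bins (controlled via Chebyshev), observes that conditionally $\maxload_t\sim\maxbinsandballs{t}{S_t}$, and then applies \Cref{thm:concentration_balls_bins} a second time to the sub-experiment with $t$ bins; this yields $\maxload_t\leq\concentration{t}{\bar l}+O(1)=(1-\beta+\smallo{1})\log n/\log\log n$ via \Cref{lem:nu}\ref{lem:nu1}. You instead bypass the conditioning entirely and bound $\prob{\maxload_t\geq\ell}$ by a direct first-moment/union bound over the $t$ bins, which is more elementary and avoids the second invocation of \Cref{thm:concentration_balls_bins}. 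Both proofs close the argument the same way, using \Cref{thm:concentration_balls_bins} and \Cref{lem:nu}\ref{lem:nu1} for the lower bound on $\maxload$. Your approach is shorter and needs nothing about $\Concentration$ beyond its asymptotic order; the paper's approach has the mild advantage of giving the slightly sharper gap $(\beta+\smallo{1})\log n/\log\log n$ rather than $(\beta/3+\smallo{1})\log n/\log\log n$, but that is irrelevant for the lemma as stated.
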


\begin{proof}
We observe that $\maxload-\maxload_{ t}$ is strictly decreasing in $t$. Thus, it suffices to show $\maxload-\maxload_{ t}\equal\smallomega{1}$ for $t=\rounddown{n^{1-\beta}}$ and $\beta\in (0,1)$. We denote by $S_t$ the total number of balls in the first $t$ bins. We have $\expec{S_t}=t\nballs/\nbins$ and $\variance{S_t}\leq \expec{S_t}$. Hence, by Chebyshev's inequality, we have \whp\
\begin{align}\label{eq:19}
	S_t\lessorequal \frac{t\nballs}{\nbins}+\left(\frac{t\nballs}{\nbins}\right)^{2/3}=:\bar{l}=\bar{l}(n).
\end{align}
Conditioned on the event $S_t=l$ for $l\in\N$, $\maxload_{t}$ is distributed like $\maxbinsandballs{t}{l}$. Thus,
\begin{align*}
	\prob{\maxload_{t}\lessorequal \rounddown{\concentration{t}{\bar{l}}}+1}&\greaterorequal \sum_{l=1}^{\bar{l}}\probLarge{S_t=l}\condprob{\maxload_{t}\lessorequal \rounddown{\concentration{t}{\bar{l}}}+1}{S_t=l}
	\\
	&\greaterorequal \prob{S_t\leq \bar{l}}\prob{\maxbinsandballs{t}{\bar{l}}\lessorequal \rounddown{\concentration{t}{\bar{l}}}+1}
	\equal\left(1-\smallo{1}\right),
\end{align*}
where the last equality follows from \Cref{thm:concentration_balls_bins} and (\ref{eq:19}). Due to \Cref{lem:nu}\ref{lem:nu1} and the assumption $t=\rounddown{n^{1-\beta}}$ we get $\concentration{t}{\bar{l}}=\left(1+\smallo{1}\right)\log t/\log \log t=\left(1-\beta+\smallo{1}\right)\log n/\log \log n$, which yields \whp\ $\maxload_{t}\leq\left(1-\beta+\smallo{1}\right)\log n/\log\log n$. By \Cref{lem:nu}\ref{lem:nu1} we have \whp\ $\maxload=\left(1+\smallo{1}\right)\log n/\log\log n$. Hence, we obtain \whp\ $\maxload-\maxload_{t}\greaterorequal \left(\beta+\smallo{1}\right)\log n/\log\log n=\smallomega{1}$, as desired.
\end{proof}

\section{\ER\ random graph and graphs without complex components}\label{sec:ergraph}
We start this section by providing a relation between the degree sequence of the \ER\ random graph $G(n,m)$ and the loads of a balls-into-bins model. In particular, this yields a refined version of \Cref{thm:max_degree_ergraph}.

\begin{thm}\label{thm:G_n_m_bins_balls}
	Let $m=m(n)=\bigo{n}$ and $\degreesequence=\degreesequence(n)=\left(\degree{1}{G}, \ldots, \degree{n}{G}\right)$ be the degree sequence of $G=G(n,m)\ur \mathcal G(n,m)$. Moreover, let $\location=\location(n)\sim \binsandballs{n}{2m}$, $\loadvector=\loadvector(n)=\loadvector(\location)$ be the vector of loads of $\location$, and $\varepsilon>0$. Then
	\begin{enumerate}
	\item\label{thm:G_n_m_bins_balls1}
	the degree sequence $\degreesequence$ is contiguous with respect to $\loadvector$, i.e.	$\contiguous{\degreesequence}{\loadvector}$;
	\item\label{thm:G_n_m_bins_balls2}
	\whp\ $\rounddown{\concentration{n}{2m}-\varepsilon}\lessorequal \maxdegree{G}\lessorequal \rounddown{\concentration{n}{2m}+\varepsilon}$.
	\end{enumerate}

\end{thm}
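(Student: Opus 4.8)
The plan is to realise $G(n,m)$ as a balls-into-bins experiment conditioned on an event of probability bounded away from $0$. Concretely, take $\location=\location(n)\sim\binsandballs{n}{2m}$ with the $2m$ balls grouped into $m$ consecutive pairs, and form the random multigraph $\multigraph$ with $\vertexSet{\multigraph}=[n]$ and edge (multi-)set $\setbuilder{\{\locationBit_{2i-1},\locationBit_{2i}\}}{i\in[m]}$, exactly as in \Cref{fig:Gnm_balls_bins}. The first step is to show that, conditioned on the event $\mathcal{E}$ that $\multigraph$ is simple (no loops, no parallel edges), $\multigraph$ is distributed uniformly on $\mathcal{G}(n,m)$: for a fixed simple graph $H$ with $m$ edges and degree sequence $(d_1,\dots,d_n)$, the number of location vectors $\location$ giving rise to $\multigraph=H$ is $m!\,2^m$ (order the $m$ edges, then orient each), independent of $H$, so each simple graph is equally likely. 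In particular, conditioned on $\mathcal{E}$, the load vector $\loadvector(\location)$ is exactly the degree sequence $\degreesequence$ of $G(n,m)$, since the load of bin $j$ counts the number of ball-endpoints landing in $j$, which equals $\degree{j}{H}$.

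The second step is to show $\liminf_{n\to\infty}\prob{\mathcal{E}}>0$ when $m=\bigo{n}$. This is a standard second-moment / Poissonisation computation: the expected number of loops is $m/n=\bigo{1}$ and the expected number of pairs of parallel edges is $\bigo{m^2/n^2}=\bigo{1}$, and one checks that the number of \lq bad\rq\ configurations has bounded factorial moments, so by a Poisson-approximation (or Janson-type) argument $\prob{\mathcal{E}}$ converges to a positive constant (of the form $e^{-c}$ for some $c=c(m/n)$). Alternatively one can cite the well-known fact that the configuration-model-style pairing is simple with probability bounded away from $0$ in the sparse regime.

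With these two ingredients, part \ref{thm:G_n_m_bins_balls1} follows immediately: if $I=I(n)$ is such that $\prob{\loadvector\in I}\to 1$, then $\prob{\loadvector\in I \mid \mathcal{E}}\geq 1-\prob{\loadvector\notin I}/\prob{\mathcal{E}}\to 1$, and since $\loadvector\mid\mathcal{E}\sim\degreesequence$ we get $\prob{\degreesequence\in I}\to 1$; this is exactly $\contiguous{\degreesequence}{\loadvector}$. For part \ref{thm:G_n_m_bins_balls2}, apply \Cref{thm:concentration_balls_bins} with $\nbins=n$ and $\nballs=2m=\bigo{n}$: \whp\ the maximum load $\maxload\sim\maxbinsandballs{n}{2m}$ lies in $\bigl[\rounddown{\concentration{n}{2m}-\varepsilon},\,\rounddown{\concentration{n}{2m}+\varepsilon}\bigr]$; since $\maxdegree{G}$ is the maximum coordinate of $\degreesequence$ and $\maxload$ is the maximum coordinate of $\loadvector$, the property \lq maximum coordinate lies in this interval\rq\ is an event of the type preserved by contiguity, so it transfers from $\loadvector$ to $\degreesequence$ and hence to $\maxdegree{G}$. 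The main obstacle is the second step: pinning down $\prob{\mathcal{E}}=\Omega(1)$ uniformly over all $m=\bigo{n}$ requires a careful moment estimate (or a clean citation), whereas steps one and three are essentially bookkeeping once the bijective count $m!\,2^m$ is in hand.
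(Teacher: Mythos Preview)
Your proposal is correct and follows essentially the same approach as the paper: build the multigraph $\multigraph$ from paired balls, note that $\prob{\multigraph=H}=2^m m!/n^{2m}$ for every simple $H$, show $\prob{\multigraph\text{ simple}}$ is bounded away from $0$, and then transfer \Cref{thm:concentration_balls_bins} by contiguity. The one place where the paper is cleaner is your ``main obstacle'': rather than invoking Poisson approximation or factorial moments, the paper computes $\prob{\multigraph\text{ simple}}$ directly as the product $(1-1/n)^m\cdot\prod_{i=0}^{m-1}\bigl(1-i/\tbinom{n}{2}\bigr)\geq\exp(-2m/n-4m^2/n^2)$, which is a one-line lower bound once $m=\bigo{n}$.
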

\begin{proof}
We consider the random multigraph $\multigraph$ given by $\vertexSet{\multigraph}=[n]$ and $\edgeSet{\multigraph}=\setbuilder{\left\{\locationBit_{2i-1}, \locationBit_{2i}\right\}}{i \in [m]}$, where $\location=\left(\locationBit_1, \ldots, \locationBit_{2m}\right)$ is the location vector (see \Cref{fig:Gnm_balls_bins} for an illustration). We observe that for $v\in[n]$ the load $\load_v$ equals the degree $\degree{v}{\multigraph}$. For each graph $H\in \mathcal{G}(n,m)$ we have $\prob{\multigraph=H}=2^mm!/n^{2m}$. Hence, conditioned on the event that $\multigraph$ is simple, $\multigraph$ is distributed like $G$. Moreover, for $n$ large enough we have \begin{align*}
	\prob{\multigraph \text{ is simple}}&\equal\prob{\multigraph \text{ has no loops}}\cdot\condprob{\multigraph \text{ has no multiple edges}}{\multigraph \text{ has no loops}}
	\\
	&\equal
	\left(1-\frac{1}{n}\right)^m\hspace{0.05cm}\cdot\hspace{0.05cm} \prod_{i=0}^{m-1}\left(1-\frac{i}{\binom{n}{2}}\right)
	\greaterorequal
	\exp\left(\frac{-2m}{n}-\frac{4m^2}{n^2}\right)\greater\rho,
\end{align*}
for a suitable chosen $\rho>0$, since $m=\bigo{n}$. This shows $\liminf_{n\to \infty}\prob{\multigraph \text{ is simple}}>0$. Thus, each property that holds \whp\ in $\multigraph$, is also true \whp\ in $G$. In particular, the degree sequence $\degreesequence$ of $G$ is contiguous with respect to the degree sequence $\loadvector$ of $\multigraph$, i.e. $\contiguous{\degreesequence}{\loadvector}$. Together with \Cref{thm:concentration_balls_bins} this yields \whp\ $\rounddown{\concentration{n}{2m}-\varepsilon}\lessorequal \maxdegree{G}\lessorequal \rounddown{\concentration{n}{2m}+\varepsilon}$, as desired.
\end{proof}

We recall that we denote by $\nocomplex(n,m)$ a graph chosen uniformly at random from the class $\nocomplexclass(n,m)$ consisting of graphs having no complex components, vertex set $[n]$, and $m$ edges. Later $\nocomplex(n,m)$ will take the role of the non-complex part of the random planar graph. In this case the relation $m=n/2+\bigo{n^{2/3}}$ is satisfied (see \Cref{thm:internal_structure}). Britikov \cite{uni} provided a useful relation between $\nocomplex(n,m)$ and $G(n,m)$ in this range. 

\begin{thm}[\hspace{1sp}\cite{uni}]\label{thm:non_complex}
	Let $m=m(n)\leq n/2+\bigo{n^{2/3}}$ and $G=G(n,m)\ur \mathcal G(n,m)$. Then 
	\begin{align*}
	\liminf_{n \to \infty}~ \prob{G \text{ has no complex components }}>0.
	\end{align*}
In particular, $\liminf_{n \to \infty}~ \prob{G \text{ is planar }}>0$.
\end{thm}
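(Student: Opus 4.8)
The result is due to Britikov \cite{uni}; here is how I would prove it. The crucial structural remark is that having no complex component is a \emph{decreasing} graph property: if $H\subseteq G$, then each component of $H$ lies inside a single component of $G$, so a component of $H$ with two independent cycles would force the corresponding component of $G$ to be complex as well. Hence, in the random graph process, $\prob{G(n,m)\text{ has no complex component}}$ is non-increasing in $m$. Since the hypothesis yields a constant $C>0$ with $m(n)\le n/2+Cn^{2/3}$ for all large $n$, it therefore suffices to prove the statement for the single sequence $m^\ast=m^\ast(n):=\rounddown{n/2+Cn^{2/3}}\ge m(n)$, i.e.\ to show $\liminf_{n\to\infty}\prob{G(n,m^\ast)\text{ has no complex component}}>0$; note that $m^\ast$ lies in the critical window $m=n/2+\Th{n^{2/3}}$. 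The ``in particular'' assertion then follows for free, because trees and unicyclic graphs are planar and a disjoint union of planar graphs is planar, so every graph without a complex component is planar.

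For $G(n,m^\ast)$ I would follow Britikov's generating-function argument. A graph without complex components is a disjoint union of a forest and a collection of unicyclic graphs, so its bivariate exponential generating function---with $x$ marking vertices and $y$ marking edges---can be written as an explicit elementary function of the rooted-tree series $T=T(z)$ satisfying $T=ze^{T}$, evaluated at $z=xy$, and of $y$ alone, using that a tree on $v$ vertices has $v-1$ edges while a unicyclic graph on $v$ vertices has $v$ edges. One then extracts the coefficient of $x^{n}y^{m^\ast}$ by Cauchy's formula, divides by $\binom{\binom{n}{2}}{m^\ast}$, and evaluates the resulting integral by the saddle-point method. The relevant saddle point sits at the square-root singularity $z_{0}=e^{-1}$ of $T$, where $T(z)=1-\sqrt{2(1-ez)}+\bigo{1-ez}$, and the window scaling $m^\ast-n/2=\Th{n^{2/3}}$ is exactly the regime in which the local behaviour of $T$ near $z_{0}$ contributes at the leading order. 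Carrying this out shows that $\prob{G(n,m^\ast)\text{ has no complex component}}$ converges to a constant $A(C)\in(0,1)$, which gives the required bound. Alternatively, one can replace this computation by the known structural description of $G(n,m)$ in the critical window---rescaled by $n^{2/3}$, the component sizes and surpluses converge, and the limiting configuration has all surpluses at most $1$ with positive probability (see \cite{birth_giant} and related work)---from which the same conclusion follows.

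The main obstacle is precisely this critical-window estimate. The plain first moment method is of no use here: the expected number of, say, $\theta$-subgraphs of $G(n,m^\ast)$ tends to infinity, because a complex component, once present, typically carries polynomially many such subgraphs, so the absence of complex components is genuinely not a high-probability event and there is no shortcut around a fine analysis. In the generating-function route the real work is controlling the error terms in the transfer from the local expansion of $T$ at $z_{0}=e^{-1}$ precisely enough to read off the limiting constant $A(C)$ (and, if one prefers to skip the monotonicity reduction, to do so uniformly as $(m-n/2)n^{-2/3}$ ranges over a bounded set); in the structural route it is the analysis of $G(n,m)$ throughout the window. Away from the window, for $m\le n/2-\smallomega{n^{2/3}}$, the property holds \whp\ by a first-moment bound on complex \emph{components} (the count of complex subgraphs alone would diverge, but the probability that a vertex set forms an entire complex component carries an additional exponential suppression), so all the content of the theorem sits at the top of the window, i.e.\ at the single value $m^\ast$ to which monotonicity reduces it.
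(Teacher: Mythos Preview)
The paper does not give a proof of this theorem; it is simply quoted as a known result of Britikov \cite{uni} and then applied. There is therefore no ``paper's proof'' to compare against. Your sketch is a reasonable and correct outline of how the result is obtained: the monotonicity reduction is valid (each component of a subgraph $H\subseteq G$ sits inside a single component of $G$ and has no larger cycle rank, so ``no complex component'' is indeed a decreasing property, and the standard coupling via the random graph process gives monotonicity of the probability in $m$), the ``in particular'' clause follows exactly as you say, and the two routes you describe for the critical-window estimate---Britikov's saddle-point extraction from the EGF of graphs without complex components, or the structural results of \L{}uczak--Pittel--Wierman/Aldous on component surpluses in the window---are precisely the standard ones. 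Your diagnosis that all the content lives at the top of the window, and that naive first-moment bounds on bicyclic \emph{subgraphs} cannot work there, is also accurate.
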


Combining Theorems \ref{thm:G_n_m_bins_balls}\ref{thm:G_n_m_bins_balls2} and \ref{thm:non_complex} we can deduce that \whp\ $\maxdegree{\nocomplex(n,m)}$ is concentrated at two values.

\begin{lem}\label{lem:random_non_complex}
	Let $m=m(n)= n/2+\bigo{n^{2/3}}$, $\nocomplex=\nocomplex(n,m)\ur \nocomplexclass(n,m)$ be a random graph without complex components, and $\varepsilon>0$. Then \whp\ $\rounddown{\specialconcentration{n}-\varepsilon}\lessorequal \maxdegree{\nocomplex}\lessorequal \rounddown{\specialconcentration{n}+\varepsilon}$.
\end{lem}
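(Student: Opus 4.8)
The plan is to reduce the statement to the concentration result for the \ER\ random graph already available in this section: I would transfer \Cref{thm:G_n_m_bins_balls}\ref{thm:G_n_m_bins_balls2} from $G(n,m)$ to $\nocomplex(n,m)$ by conditioning on the (positive-probability) event that $G(n,m)$ has no complex components, and then replace the parameter $2m$ by $n$ in the function $\Concentration$ using the stability estimate in \Cref{lem:nu}\ref{lem:nu3}.

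First I would note that, conditioned on the event $\mathcal E$ that $G=G(n,m)$ has no complex components, $G$ is distributed uniformly over $\nocomplexclass(n,m)$, hence like $\nocomplex=\nocomplex(n,m)$. Since $m\leq n/2+\bigo{n^{2/3}}$, \Cref{thm:non_complex} gives $\liminf_{n\to\infty}\prob{\mathcal E}>0$, so every graph property holding \whp\ in $G$ also holds \whp\ in $\nocomplex$. Applying \Cref{thm:G_n_m_bins_balls}\ref{thm:G_n_m_bins_balls2} with the auxiliary constant $\varepsilon':=\varepsilon/2>0$, we thus obtain \whp\ that $\rounddown{\concentration{n}{2m}-\varepsilon'}\lessorequal \maxdegree{\nocomplex}\lessorequal \rounddown{\concentration{n}{2m}+\varepsilon'}$.

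It remains to show $\concentration{n}{2m}=\specialconcentration{n}+\smallo{1}$. Since $m=n/2+\bigo{n^{2/3}}$, the shift $d:=2m-n$ satisfies $|d|=\bigo{n^{2/3}}=\smallo{n\left(\log\log n\right)^2/\log n}$, so \Cref{lem:nu}\ref{lem:nu3} (applied with $\nballs=n$ if $2m\geq n$, and with $\nballs=2m=\Th{n}$ and shift $-d$ otherwise) yields $\concentration{n}{2m}=\specialconcentration{n}+\smallo{1}$. Hence for $n$ large enough we have $\specialconcentration{n}-\varepsilon\leq \concentration{n}{2m}-\varepsilon'$ and $\concentration{n}{2m}+\varepsilon'\leq \specialconcentration{n}+\varepsilon$, and therefore $\rounddown{\specialconcentration{n}-\varepsilon}\leq\rounddown{\concentration{n}{2m}-\varepsilon'}$ and $\rounddown{\concentration{n}{2m}+\varepsilon'}\leq\rounddown{\specialconcentration{n}+\varepsilon}$. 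Combining this with the displayed bound on $\maxdegree{\nocomplex}$ completes the proof.

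The argument is a short chain of already-established facts, so there is no genuine obstacle; the only point needing mild care is the passage from the parameter $2m$ to $n$ inside $\Concentration$. This is exactly where the hypothesis $m=n/2+\bigo{n^{2/3}}$ is used rather than merely $m=\bigo{n}$: the deviation $2m-n$ must be $\smallo{n(\log\log n)^2/\log n}$ for $\concentration{n}{2m}$ and $\specialconcentration{n}$ to agree up to $\smallo{1}$ via \Cref{lem:nu}\ref{lem:nu3}, and one should also check (as above, once $\varepsilon'<\varepsilon$ is fixed) that this $\smallo{1}$ discrepancy is absorbed by the floor rounding rather than pushing $\maxdegree{\nocomplex}$ out of the claimed interval.
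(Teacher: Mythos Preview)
Your proposal is correct and follows essentially the same route as the paper: transfer \Cref{thm:G_n_m_bins_balls}\ref{thm:G_n_m_bins_balls2} to $\nocomplex(n,m)$ via the positive-probability conditioning supplied by \Cref{thm:non_complex}, then use \Cref{lem:nu}\ref{lem:nu3} to replace $\concentration{n}{2m}$ by $\specialconcentration{n}$ with an $\smallo{1}$ error absorbed by the $\varepsilon/2$ slack. Your extra care in handling both signs of $2m-n$ when invoking \Cref{lem:nu}\ref{lem:nu3} is a minor elaboration the paper leaves implicit.
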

\begin{proof}
Combining Theorems \ref{thm:G_n_m_bins_balls}\ref{thm:G_n_m_bins_balls2} and \ref{thm:non_complex} yields \whp
\begin{align}\label{eq:17}
	\rounddown{\concentration{n}{2m}-\varepsilon/2}\lessorequal \maxdegree{\nocomplex}\lessorequal \rounddown{\concentration{n}{2m}+\varepsilon/2}.
\end{align}
Using \Cref{lem:nu}\ref{lem:nu3} we obtain $\concentration{n}{2m}=\specialconcentration{n}+\smallo{1}$. Together with (\ref{eq:17}) this shows the statement. 
\end{proof}

\section{Random complex part and forests with specified roots}\label{sec:forests}
The goal of this section is to prove that \whp\ the maximum degree of a random complex part is concentrated at two values (see \Cref{thm:random_complex_part}\ref{thm:random_complex_part2}). As a random complex part can be constructed by using a random forest, we start by analysing the class $\forestclass(n,\ntrees)$ of forests on vertex set $[n]$ having $\ntrees$ tree components (some of which might just be isolated vertices) such that the vertices $1, \ldots, \ntrees$ lie all in different tree components. 

In \Cref{sub:pruefer} we generalise the concept of Prüfer sequences to forests. Then we determine the maximum degree in a random forest in \Cref{sub:random_forest}. Finally, we derive the concentration result on the maximum degree in a random complex part in \Cref{sub:random_complex_part}.

\subsection{Prüfer sequences for forests with specified roots}\label{sub:pruefer}
Similar to Prüfer sequences for trees (see e.g. \cite{book_pruefer, book_pruefer2}), there is a bijection between $\forestclass(n, \ntrees)$ and $\sequences{n}{\ntrees}:=[n]^{n-\ntrees-1}\times [\ntrees]$ (see e.g. \cite[Section 6.6]{Asymptopia}): Given a forest $\forest \in \forestclass(n,\ntrees)$ we construct a sequence $\left(\forest_0, \ldots, \forest_{n-\ntrees}\right)$ of forests and two sequences $\left(x_1, \ldots, x_{n-\ntrees}\right)$ and $\left(y_1, \ldots, y_{n-\ntrees}\right)$ of vertices as follows. We start with $\forest_0:=\forest$. Given $\forest_{i-1}$ for an $i\in[n-\ntrees]$, we let $y_i$ be the leaf with largest label in $\forest_{i-1}$ and $x_i$ be the unique neighbour of $y_i$. Furthermore, we obtain $\forest_i$ by deleting the edge $x_iy_i$ in $\forest_{i-1}$. We note that this construction is always possible, since $\forest_{i-1}$ has $n-t-i+1$ edges and therefore at least one leaf. We call 
\begin{align}\label{eq:20}
\prueferseqence(\forest):=\left(x_1, \ldots, x_{n-\ntrees}\right)
\end{align}
the {\em Prüfer sequence} of $\forest$. We denote by $\frequency{v}{\mathbf{w}}:=\left|\setbuilder{i\in[n-\ntrees]}{w_i=v}\right|$ the number of occurrences of an element $v\in[n]$ in $\mathbf{w}=\left(w_1, \ldots, w_{n-\ntrees}\right)\in[n]^{n-\ntrees}$. 

\begin{thm}\label{thm:pruefer}
Let $n, \ntrees\in \N$ and $\forestclass(n,\ntrees)$ be the class of forests on vertex set $[n]$ consisting of $\ntrees$ tree components such that the vertices $1, \ldots, \ntrees$ lie all in different tree components. In addition, let $\sequences{n}{\ntrees}=[n]^{n-\ntrees-1}\times [\ntrees]$ and $\prueferseqence(\forest)$ be the Prüfer sequence of $F\in\forestclass(n,\ntrees)$ as defined in (\ref{eq:20}). Then $\prueferseqence:\forestclass(n,\ntrees)\to \sequences{n}{\ntrees}$ is a bijection. For $\forest\in \forestclass(n,\ntrees)$ and $v\in[n]$ we have \begin{align}\label{eq:12}
	\degree{v}{\forest}\equal
	\begin{cases}
		\frequency{v}{\prueferseqence(\forest)} & \text{~~if~~} v\in [\ntrees]
		\\
		\frequency{v}{\prueferseqence(\forest)}+1 & \text{~~if~~} v\in[n]\setminus [\ntrees].
	\end{cases}
\end{align}
\end{thm}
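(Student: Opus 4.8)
The plan is to prove \Cref{thm:pruefer} by establishing the bijection via an explicit two-sided inverse, and then reading off the degree formula~\eqref{eq:12} from the construction. First I would verify that the map $\prueferseqence$ is well defined with codomain $\sequences{n}{\ntrees}=[n]^{n-\ntrees-1}\times[\ntrees]$. The key point here is a parity/invariant observation: throughout the deletion process, every vertex in $[\ntrees]$ always stays in a distinct tree component (deleting an edge never merges components), and a root $v\in[\ntrees]$ is never chosen as the leaf $y_i$ with largest label unless its whole component has been reduced to the single vertex $v$ --- and even then only if it is the \emph{unique} remaining leaf, which forces $i=n-\ntrees$. Hence for $i<n-\ntrees$ the removed leaf $y_i$ lies in $[n]\setminus[\ntrees]$, so its neighbour can be anything in $[n]$, while at the very last step $\forest_{n-\ntrees-1}$ is a single edge with both endpoints being roots; the endpoint of larger label is deleted as $y_{n-\ntrees}$, forcing $x_{n-\ntrees}\in[\ntrees]$. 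This shows $\prueferseqence(\forest)\in\sequences{n}{\ntrees}$.

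Next I would construct the inverse map $\prueferinvers:\sequences{n}{\ntrees}\to\forestclass(n,\ntrees)$ by the standard reconstruction: given $(x_1,\ldots,x_{n-\ntrees})$, at step $i$ one knows the set of labels that have already been ``used up'' as leaves $y_1,\ldots,y_{i-1}$ together with the labels $x_i,\ldots,x_{n-\ntrees}$ still to appear; the leaf $y_i$ is the largest label not in this set and not already removed, with the extra proviso that roots in $[\ntrees]$ are only eligible to be chosen once they are forced. One then adds the edge $x_iy_i$, and after $n-\ntrees$ steps adds the final edge between the two remaining roots. I would check that $\prueferinvers$ indeed lands in $\forestclass(n,\ntrees)$ (it produces $n-\ntrees$ edges on $n$ vertices, hence $\ntrees$ components; no cycle is created because each $y_i$ is removed from consideration right after its edge is added; and the $\ntrees$ roots end up in distinct components) and that $\prueferinvers\circ\prueferseqence=\mathrm{id}$ and $\prueferseqence\circ\prueferinvers=\mathrm{id}$, by inducting on $n-\ntrees$ and matching the leaf chosen at each step. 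Since both sets have the same cardinality one of the two composition checks in fact suffices, but doing the deletion/insertion match directly is cleanest.

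For the degree formula, the observation is that a vertex $v\in[n]\setminus[\ntrees]$ is removed exactly once from the forest (as $y_i$ for some $i$), and immediately before that removal its degree is $1$; each earlier step in which $v$ appeared as $x_i$ corresponds to an edge incident to $v$ that was deleted while $v$ still had degree $\geq 2$. Counting, $\degree{v}{\forest}$ equals the number of times $v$ appears among the $x_i$ plus the one remaining edge it had when it became a leaf, i.e. $\frequency{v}{\prueferseqence(\forest)}+1$. For $v\in[\ntrees]$, the root $v$ is never removed during the first $n-\ntrees-1$ steps, and at the last step it is either the deleted endpoint $y_{n-\ntrees}$ or the surviving one; in the former case it has degree $1$ at that point and appears $\frequency{v}{\prueferseqence(\forest)}=\degree{v}{\forest}-1\ldots$ --- wait, this needs care, so let me instead argue uniformly: each edge incident to $v$ is deleted at some step $i$, and since $v\in[\ntrees]$ it is the endpoint $x_i$ (never $y_i$, for $i<n-\ntrees$), except possibly the final edge which is counted by the last coordinate $x_{n-\ntrees}\in[\ntrees]$ only if $v$ is that surviving endpoint. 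Carefully, every one of the $\degree{v}{\forest}$ edges at $v$ contributes exactly one occurrence of $v$ in $(x_1,\ldots,x_{n-\ntrees})$, giving $\degree{v}{\forest}=\frequency{v}{\prueferseqence(\forest)}$. The main obstacle I anticipate is precisely this bookkeeping at the final step and the tie-breaking rule for when a root becomes an eligible leaf; getting the invariant ``roots stay in distinct components and are only removed when forced'' stated and used consistently in both directions of the bijection is where the real work lies, whereas the degree count then follows by a short edge-accounting argument once the leaf-removal process is pinned down.
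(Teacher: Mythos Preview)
Your strategy---build an explicit inverse, verify both compositions, then read off the degree formula by edge-accounting---is exactly the paper's. But you have misidentified what happens at the last step, and the error propagates into your inverse map.

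A root $r\in[\ntrees]$ is \emph{never} chosen as $y_i$, for \emph{any} $i\in[n-\ntrees]$: since $r$ has the smallest label in its component of every $\forest_i$, whenever $r$ is a leaf its component contains another leaf of larger label, and once its component has shrunk to $\{r\}$ the vertex has degree $0$ and is not a leaf at all. Hence $\{y_1,\ldots,y_{n-\ntrees}\}=[n]\setminus[\ntrees]$, and the single edge of $\forest_{n-\ntrees-1}$ joins the root $x_{n-\ntrees}\in[\ntrees]$ to a non-root $y_{n-\ntrees}$---not two roots as you claim (two roots lie in different components of $\forest$ and are therefore never adjacent). Correspondingly, your inverse must not add a ``final edge between the two remaining roots'': that would give $n-\ntrees+1$ edges and place two roots in the same component, so the output would not lie in $\forestclass(n,\ntrees)$. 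The paper's inverse simply adds the $n-\ntrees$ edges $\tilde x_i\tilde y_i$, where $\tilde y_i$ is the largest-label vertex of current remaining degree $1$; initializing each root $v\in[\ntrees]$ with remaining degree $\frequency{v}{\mathbf{w}}$ rather than $\frequency{v}{\mathbf{w}}+1$ already keeps roots from ever being selected, so no ``extra proviso'' is needed. Once roots are known never to be any $y_i$, your degree count for $v\in[\ntrees]$ becomes the one-liner you were reaching for: every edge at $v$ contributes exactly one occurrence of $v$ among $x_1,\ldots,x_{n-\ntrees}$.
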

\Cref{thm:pruefer} can be shown by using similar ideas as in the classical case of trees. For the sake of completeness, we provide a proof of \Cref{thm:pruefer} in \Cref{sec:proof_pruefer}.

\subsection{Degree sequence and maximum degree of a random forest}\label{sub:random_forest}
We consider a random forest $\forest=\forest(n,\ntrees)\ur \forestclass(n,\ntrees)$ and determine the degree sequence of $\forest$ and the maximum degree $\maxdegree{\forest}$. 

\begin{thm}\label{thm:forest_balls_bins}
Let $n, \ntrees\in \N$ and $\degreesequence=\left(\degree{1}{\forest}, \ldots, \degree{n}{\forest}\right)$ be the degree sequence of $\forest=\forest\left(n, \ntrees\right)\ur \forestclass\left(n, \ntrees\right)$. Let $\location\sim \binsandballs{n}{n-\ntrees-1}$ and $\load_j=\load_j(\location)$ be the load in bin $\bin_j$ for each $j\in[n]$. In addition, let $\rootrv\ur [\ntrees]$ (independent of $\forest$) and for $j\in[\ntrees]$ we define $\rootvectorbit_j=1$ if $\rootrv=j$ and $\rootvectorbit_j=0$ otherwise. Then
\begin{align*}
\big(\load_1+\rootvectorbit_1, \ldots, \load_\ntrees+\rootvectorbit_\ntrees, \load_{\ntrees+1}+1, \ldots, \load_n+1\big)\sim \degreesequence.
\end{align*}
\end{thm}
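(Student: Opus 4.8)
The plan is to deduce the statement directly from the bijection $\prueferseqence\colon\forestclass(n,\ntrees)\to\sequences{n}{\ntrees}$ of \Cref{thm:pruefer}, together with the elementary fact that a uniformly random point of a product set has independent, uniformly distributed coordinates. Since $\forest\ur\forestclass(n,\ntrees)$ and $\prueferseqence$ is a bijection, the Prüfer sequence $\prueferseqence(\forest)=(x_1,\dots,x_{n-\ntrees})$ is distributed uniformly on $\sequences{n}{\ntrees}=[n]^{n-\ntrees-1}\times[\ntrees]$. Writing $\prueferseqence(\forest)=(\mathbf{W},Z')$ with $\mathbf{W}=(x_1,\dots,x_{n-\ntrees-1})\in[n]^{n-\ntrees-1}$ and $Z'=x_{n-\ntrees}\in[\ntrees]$, uniformity on a product set gives that $\mathbf{W}$ and $Z'$ are independent, with $\mathbf{W}$ uniform on $[n]^{n-\ntrees-1}$ and $Z'$ uniform on $[\ntrees]$. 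Now $\mathbf{W}$ being uniform on $[n]^{n-\ntrees-1}$ is precisely the law of a location vector $\location\sim\binsandballs{n}{n-\ntrees-1}$, since each of its $n-\ntrees-1$ coordinates is an independent uniform element of $[n]$, and $Z'$ uniform on $[\ntrees]$ is precisely the law of $\rootrv\ur[\ntrees]$. Hence the pair $(\mathbf{W},Z')$ is distributed like $(\location,\rootrv)$, with the two components independent on both sides.

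Next I would read off the occurrence counts of $\prueferseqence(\forest)$ in terms of the loads and the indicator vector. For $v\in[n]\setminus[\ntrees]$ the last coordinate $Z'\in[\ntrees]$ never equals $v$, so $\frequency{v}{\prueferseqence(\forest)}$ counts occurrences of $v$ among $x_1,\dots,x_{n-\ntrees-1}$ only; under the identification above this is exactly the load $\load_v=\load_v(\location)$. For $v\in[\ntrees]$ one additionally picks up a contribution from $x_{n-\ntrees}$, namely $\frequency{v}{\prueferseqence(\forest)}=\load_v+\rootvectorbit_v$ with $\rootvectorbit_v=\mathbf{1}[\rootrv=v]$. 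Substituting these into the degree formula (\ref{eq:12}) of \Cref{thm:pruefer} gives $\degree{v}{\forest}=\load_v+\rootvectorbit_v$ for $v\in[\ntrees]$ and $\degree{v}{\forest}=\load_v+1$ for $v\in[n]\setminus[\ntrees]$. Since all of these are the same deterministic function of the single uniform object $\prueferseqence(\forest)$, and $(\mathbf{W},Z')\sim(\location,\rootrv)$, the asserted distributional identity for the whole degree sequence $\degreesequence$ follows.

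There is no real obstacle here once \Cref{thm:pruefer} is available; the points that need care are purely bookkeeping. One must keep in mind that $\frequency{\cdot}{\cdot}$ is defined on the full length-$(n-\ntrees)$ sequence, so the split into the length-$(n-\ntrees-1)$ block (which becomes the balls-into-bins part) and the final coordinate in $[\ntrees]$ (which becomes $\rootrv$) has to be done consistently; and one must genuinely use that $\mathbf{W}$ and $Z'$ are independent, since the statement is a claim about a joint law. The substantive content — that $\prueferseqence$ is a bijection and that (\ref{eq:12}) holds — is \Cref{thm:pruefer}, whose proof is deferred to \Cref{sec:proof_pruefer}.
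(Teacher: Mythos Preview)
Your proof is correct and follows essentially the same approach as the paper: both arguments use the bijection of \Cref{thm:pruefer} to identify a uniform forest with a uniform element of $[n]^{n-\ntrees-1}\times[\ntrees]$, split this into an independent $\binsandballs{n}{n-\ntrees-1}$ location vector and a uniform $\rootrv\ur[\ntrees]$, and then read off degrees via (\ref{eq:12}). The only cosmetic difference is that the paper phrases it via $\prueferinvers$ (build $\forest$ from $(\location,\rootrv)$) whereas you phrase it via $\prueferseqence$ (push $\forest$ forward to $(\mathbf{W},Z')$); these are the same argument.
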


\begin{proof}
Instead of directly choosing $\forest$ from $\forestclass(n,\ntrees)$, we can equivalently create $\forest$ by Prüfer sequences from \Cref{sub:pruefer}: First we perform a balls-into-bins experiment with $n$ bins and $n-\ntrees-1$ balls and let $\location=\left(\locationBit_1, \ldots, \locationBit_{n-\ntrees-1}\right)\sim\binsandballs{n}{n-\ntrees-1}$ be the location vector. Then we independently choose $\locationBit_{n-\ntrees}\ur [\ntrees]$ and set $\forest=\prueferinvers\left(\locationBit_1, \ldots, \locationBit_{n-\ntrees}\right)$ and the statement follows by (\ref{eq:12}).
\end{proof}

Using this connection to the balls-into-bins model we obtain an upper bound on $\maxdegree{\forest(n,\ntrees)}$ (see \Cref{thm:forest_max_degree}\ref{thm:forest_max_degree1}). If we assume that $\ntrees$ is not too \lq large\rq, we can even show that \whp\ $\maxdegree{\forest(n,\ntrees)}$ is concentrated at two values and that the maximum degree of a root vertex, i.e. a vertex in $[\ntrees]$, is much smaller than $\maxdegree{\forest(n,\ntrees)}$ (see \Cref{thm:forest_max_degree}\ref{thm:forest_max_degree2}). We will need these facts later when we use random forests to build a random complex part (see \Cref{sub:random_complex_part}).
\begin{thm}\label{thm:forest_max_degree}
Let $\ntrees=\ntrees(n)$, $\forest=\forest\left(n, \ntrees\right)\ur \forestclass\left(n, \ntrees\right)$, and $\varepsilon>0$. Then 
\begin{enumerate}
\item\label{thm:forest_max_degree1}
\whp\
$\maxdegree{\forest}\lessorequal \rounddown{\specialconcentration{n}}+2$;
\item\label{thm:forest_max_degree2}
if $\ntrees=\bigo{n^{1-\beta}}$ for some $\beta>0$, then \whp\ $\rounddown{\specialconcentration{\nbins}-\varepsilon}+1\lessorequal \maxdegree{\forest}\lessorequal \rounddown{\specialconcentration{\nbins}+\varepsilon}+1$ and $\maxdegree{\forest}-\max\setbuilder{d_\forest(r)}{r \in [\ntrees]}=\smallomega{1}$.
\end{enumerate}
\end{thm}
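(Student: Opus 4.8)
The plan is to derive everything from \Cref{thm:forest_balls_bins}, which identifies the degree sequence of $\forest(n,\ntrees)$ with a load vector of $\binsandballs{n}{n-\ntrees-1}$ (shifted by $1$ outside the roots, and by a single extra ball landing in $[\ntrees]$). Since $\maxbinsandballs{n}{n-\ntrees-1}$ is what controls both parts, the first step is to apply \Cref{thm:concentration_balls_bins} with $\nballs=n-\ntrees-1=\bigo{n}$: this gives \whp\ $\rounddown{\concentration{n}{n-\ntrees-1}-\varepsilon/2}\lessorequal \maxbinsandballs{n}{n-\ntrees-1}\lessorequal \rounddown{\concentration{n}{n-\ntrees-1}+\varepsilon/2}$. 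For part~\ref{thm:forest_max_degree1}, I would simply observe that adding the shift of $+1$ (outside the roots) and the single extra ball (inside $[\ntrees]$, contributing at most $+1$ to one bin) raises the maximum by at most $2$, and that $\concentration{n}{n-\ntrees-1}\leq \specialconcentration{n}=\concentration{n}{n}$ by monotonicity in the second argument (\Cref{lem:nu}\ref{lem:nu4}). Hence \whp\ $\maxdegree{\forest}\leq \rounddown{\concentration{n}{n-\ntrees-1}+\varepsilon/2}+2\leq \rounddown{\specialconcentration{n}}+2$, choosing $\varepsilon$ small; a little care is needed because $\rounddown{\concentration{n}{n-\ntrees-1}}$ could in principle equal $\rounddown{\specialconcentration{n}}$ or be one smaller, but since we are allowed the slack of $+2$, taking the crude bound $\maxbinsandballs{n}{n-\ntrees-1}\leq \rounddown{\specialconcentration{n}}+1$ \whp\ (valid because $\concentration{n}{n-\ntrees-1}<\specialconcentration{n}+1$) suffices.

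For part~\ref{thm:forest_max_degree2}, assume $\ntrees=\bigo{n^{1-\beta}}$. Then $n-\ntrees-1=n(1-\smallo{1})=\Th{n}$, so by \Cref{lem:nu}\ref{lem:nu3} (taking the ``$d$'' there to be $\ntrees+1=\bigo{n^{1-\beta}}=\smallo{n(\log\log n)^2/\log n}$) we get $\concentration{n}{n-\ntrees-1}=\specialconcentration{n}-\smallo{1}$. Combining with the balls-into-bins concentration, \whp\ $\maxbinsandballs{n}{n-\ntrees-1}\in\{\rounddown{\specialconcentration{n}-\varepsilon},\ldots,\rounddown{\specialconcentration{n}+\varepsilon}\}$. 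Now, outside the roots the degree equals the load plus $1$, so $\max_{v\in[n]\setminus[\ntrees]} \degree{v}{\forest}$ is between $\rounddown{\specialconcentration{n}-\varepsilon}+1$ and $\rounddown{\specialconcentration{n}+\varepsilon}+1$ \whp, and the extra single ball only adds to one root and cannot push the overall maximum beyond $\rounddown{\specialconcentration{n}+\varepsilon}+1$ (again by a crude bound, or by noting its effect on the total maximum is at most $+1$). The lower bound $\rounddown{\specialconcentration{n}-\varepsilon}+1\leq\maxdegree{\forest}$ follows from the non-root part alone. This gives the two-point concentration of $\maxdegree{\forest}$.

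It remains to control the maximum degree over root vertices. The degrees of the roots are $\load_j+\rootvectorbit_j$ for $j\in[\ntrees]$, so $\max_{r\in[\ntrees]}\degree{r}{\forest}\leq \maxload_{\ntrees}+1$, where $\maxload_{\ntrees}$ is the maximum load among the first $\ntrees$ bins in $\binsandballs{n}{n-\ntrees-1}$. Here I would invoke \Cref{lem:max_load_subset} with $\nballs=n-\ntrees-1=\Th{n}$ and $t=\ntrees=\bigo{n^{1-\beta}}$: it yields \whp\ $\maxbinsandballs{n}{n-\ntrees-1}-\maxload_{\ntrees}=\smallomega{1}$. Since $\maxdegree{\forest}\geq \maxbinsandballs{n}{n-\ntrees-1}$ (from the non-root coordinates, up to the $+1$ shift, which only helps) and $\max_{r\in[\ntrees]}\degree{r}{\forest}\leq\maxload_{\ntrees}+1$, we conclude \whp\ $\maxdegree{\forest}-\max_{r\in[\ntrees]}\degree{r}{\forest}\geq \maxbinsandballs{n}{n-\ntrees-1}-\maxload_{\ntrees}-1=\smallomega{1}$. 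The main obstacle is purely bookkeeping: aligning the two constant shifts (the $+1$ on non-root coordinates and the stray ball on a root) so that they neither spoil the upper two-point bound nor the $\smallomega{1}$ separation — but since both desired conclusions have built-in slack, the crude inequalities above are enough and no delicate argument is required.
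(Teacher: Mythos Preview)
Your approach is the same as the paper's: reduce to \Cref{thm:forest_balls_bins}, apply \Cref{thm:concentration_balls_bins} and \Cref{lem:max_load_subset}, and use \Cref{lem:nu}\ref{lem:nu4} and \ref{lem:nu3} to replace $\concentration{n}{n-\ntrees-1}$ by $\specialconcentration{n}$. The only slip is in part~\ref{thm:forest_max_degree1}: the shift from $\loadvector$ to the degree sequence is at most $+1$, not $+2$, because the $+1$ on non-root coordinates and the single extra ball $\rootvectorbit_j$ on one root are applied to \emph{disjoint} coordinates, so no bin ever receives both. With your $+2$, the crude bound $\maxload\le\rounddown{\specialconcentration{n}}+1$ would only yield $\maxdegree{\forest}\le\rounddown{\specialconcentration{n}}+3$; with the correct $+1$ it gives exactly $\rounddown{\specialconcentration{n}}+2$, which is the paper's argument verbatim. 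For part~\ref{thm:forest_max_degree2} your reasoning is fine, though note that the lower bound $\maxdegree{\forest}\ge\maxload+1$ (as opposed to merely $\ge\maxload$) already uses \Cref{lem:max_load_subset} to ensure the maximum load sits outside $[\ntrees]$; the paper makes this explicit by first establishing $\maxload-\maxload_{\ntrees}=\smallomega{1}$ and then deducing $\contiguous{\maxdegree{\forest}}{\maxload+1}$, whereas you invoke the lemma only afterwards --- the ingredients are the same, just reordered.
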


\begin{proof}
Let $\location\sim \binsandballs{n}{n-\ntrees-1}$, $\maxload=\maxload(\location)\sim \maxbinsandballs{n}{n-\ntrees-1}$ be the maximum load of $\location$, and $\maxload_{t}=\maxload_{t}(\location)$ be the maximum load of one of the first $t$ bins of $\location$. Due to \Cref{thm:concentration_balls_bins} we have \whp\ $\maxload\leq \rounddown{\concentration{n}{n-\ntrees-1}}+1\leq\rounddown{\specialconcentration{n}}+1$, where we used in the last inequality \Cref{lem:nu}\ref{lem:nu4}. Combining it with \Cref{thm:forest_balls_bins} we have 
\begin{align*}
\probLarge{\maxdegree{\forest}> \rounddown{\specialconcentration{n}}+2}\lessorequal \prob{\maxload >\rounddown{\specialconcentration{n}}+1}=\smallo{1}.
\end{align*}
This shows statement \ref{thm:forest_max_degree1}.

By \Cref{lem:max_load_subset} we have \whp\ $\maxload-\maxload_{t}=\smallomega{1}$. This together with \Cref{thm:forest_balls_bins} implies \whp\ $\maxdegree{\forest}-\max\setbuilder{d_\forest(r)}{r \in [\ntrees]}=\smallomega{1}$ and $\contiguous{\maxdegree{\forest}}{\maxload+1}$. Thus, we obtain by \Cref{thm:concentration_balls_bins} that \whp
\begin{align*}
	\rounddown{\concentration{n}{n-\ntrees-1}-\varepsilon/2}+1\lessorequal \maxdegree{\forest}\lessorequal \rounddown{\concentration{n}{n-\ntrees-1}+\varepsilon/2}+1.
\end{align*}
By \Cref{lem:nu}\ref{lem:nu3} we have $\concentration{n}{n-\ntrees-1}=\specialconcentration{n}+\smallo{1}$, which proves statement \ref{thm:forest_max_degree2}.
\end{proof}

We note that the special case of random trees, i.e. when $\ntrees=1$, was studied in \cite{moon,CGS}. In particular, Carr, Goh, and Schmutz \cite{CGS} used the saddle-point method to show that \whp\ the maximum degree in random trees is concentrated at two values.

\subsection{Random complex part}\label{sub:random_complex_part}
We consider the class $\complexclass\left(C,q\right)$ consisting of complex graphs with core $C$ and vertex set $[q]$, where $C$ is a given core and $q\in \N$ (cf. \Cref{def:random_complex_part}). As illustrated in \Cref{fig:construction_complex_part}, we can construct $\complexgraph\left(C,q\right)\ur \complexclass\left(C,q\right)$ via the balls-into-bins model. Assuming that $\maxdegree{C}$ is bounded and $\numberVertices{C}$ is \lq small\rq\ compared to $q$, we will use \Cref{thm:forest_max_degree} to show that the maximum degree of $\complexgraph\left(C,q\right)$ is strongly concentrated.

\setlength\tabcolsep{0.2cm}
\begin{table}
	\begin{tabular}[t]{c|c}
		\begin{tikzpicture}[very thick, every node/.style={rectangle, inner sep=0cm, minimum size=0.4cm,fill=lightgray}]
			\small
			\node (1) at (0,0) [draw] {\textbf{1}};
			\node (2) at (1.8,0) [draw] {\textbf{2}};
			\node (3) at (0.9,1.75) [draw] {\textbf{3}};
			\node (4) at (2.6,0) [draw] {\textbf{4}};
			\node (5) at (3.2,0) [draw] {\textbf{5}};
			\node (6) at (2.6,0.7) [draw] {\textbf{6}};	
			\node (7) at (3.2,0.7) [draw] {\textbf{7}};
			\node (8) at (2.6,1.4) [draw] {\textbf{8}};
			\node (9) at (3.2,1.4) [draw] {\textbf{9}};	
			
			\draw[-] (1) to (2);
			\draw[-] (1) to (3);
			\draw[-] (2) to (3);
			
			\node () at (1.55,-0.85) [draw=none, align=left,fill=white] {(a) {\em Given}:\\Core $C=\text{\lq triangle\rq}$, $q=9$};
			
		\end{tikzpicture} &
		\begin{tikzpicture}[very thick, every node/.style={rectangle, inner sep=0cm, minimum size=0.4cm,fill=lightgray}]
			\small			
			\draw[-] (-0.02,-0.1) to (0.92,-0.1);
			\draw[-] (0,-0.1) to (0,1.4);
			\draw[-] (0.9,-0.1) to (0.9,1.4);
			\node (1) at (0.45,-0.5) [draw] {\textbf{1}};
			
			\draw[-] (1.08,-0.1) to (2.02,-0.1);
			\draw[-] (1.1,-0.1) to (1.1,1.4);
			\draw[-] (2,-0.1) to (2,1.4);
			\node (2) at (1.55,-0.5) [draw] {\textbf{2}};
			
			\draw[-] (2.18,-0.1) to (3.12,-0.1);
			\draw[-] (2.2,-0.1) to (2.2,1.4);
			\draw[-] (3.1,-0.1) to (3.1,1.4);
			\node (3) at (2.65,-0.5) [draw] {\textbf{3}};
			
			\draw[-] (3.28,-0.1) to (4.22,-0.1);
			\draw[-] (3.3,-0.1) to (3.3,1.4);
			\draw[-] (4.2,-0.1) to (4.2,1.4);
			\node (4) at (3.75,-0.5) [draw] {\textbf{4}};
			
			\draw[-] (4.38,-0.1) to (5.32,-0.1);
			\draw[-] (4.4,-0.1) to (4.4,1.4);
			\draw[-] (5.3,-0.1) to (5.3,1.4);
			\node (5) at (4.85,-0.5) [draw] {\textbf{5}};
			
			\draw[-] (5.48,-0.1) to (6.42,-0.1);
			\draw[-] (5.5,-0.1) to (5.5,1.4);
			\draw[-] (6.4,-0.1) to (6.4,1.4);
			\node (6) at (5.95,-0.5) [draw] {\textbf{6}};
			
			\draw[-] (6.58,-0.1) to (7.52,-0.1);
			\draw[-] (6.6,-0.1) to (6.6,1.4);
			\draw[-] (7.5,-0.1) to (7.5,1.4);
			\node (7) at (7.05,-0.5) [draw] {\textbf{7}};
			
			\draw[-] (7.68,-0.1) to (8.62,-0.1);
			\draw[-] (7.7,-0.1) to (7.7,1.4);
			\draw[-] (8.6,-0.1) to (8.6,1.4);
			\node (8) at (8.15,-0.5) [draw] {\textbf{8}};
			
			\draw[-] (8.78,-0.1) to (9.72,-0.1);
			\draw[-] (8.8,-0.1) to (8.8,1.4);
			\draw[-] (9.7,-0.1) to (9.7,1.4);
			\node (9) at (9.25,-0.5) [draw] {\textbf{9}};
			
			\node (B1) at (7.94,0.15) [circle, draw, minimum width=0cm, minimum height=0cm, inner sep=0.02cm] {\textbf{3}};	
			\node (B2) at (9.25,0.15) [circle, draw, minimum width=0cm, minimum height=0cm, inner sep=0.02cm] {\textbf{2}};	
			\node (B3) at (8.36,0.15) [circle, draw, minimum width=0cm, minimum height=0cm, inner sep=0.02cm] {\textbf{5}};
			\node (B4) at (0.45,0.15) [circle, draw, minimum width=0cm, minimum height=0cm, inner sep=0.02cm] {\textbf{4}};
			\node (B5) at (3.75,0.15) [circle, draw, minimum width=0cm, minimum height=0cm, inner sep=0.02cm] {\textbf{1}};	
			\node (B6) at (1.55,0.15) [circle, draw, minimum width=0cm, minimum height=0cm, inner sep=0.03cm, dashed] {\textbf{6}};		
			
			\node () at (4.9,-1.4) [draw=none, align=left,fill=white] {(b) {\em Balls into bins}: $q=9$ bins, $q-\numberVertices{C}-1=5$ balls, and a ball (6)\\ that can be allocated only to one of the first $\numberVertices{C}=3$ bins};
			
		\end{tikzpicture}
		\\ \hline
		\multicolumn{2}{l}{	
			\begin{tabular}[t]{c|c|c}
				\begin{tikzpicture}[very thick, every node/.style={rectangle, inner sep=0cm, minimum size=0.4cm,fill=lightgray}]
					\small
					\node () at (-1.1,0.8) [draw, align=left] {\textbf{4}};
					\node () at (-0.55,0.8) [draw, align=left] {\textbf{9}};
					\node () at (0,0.8) [draw, align=left] {\textbf{8}};
					\node () at (0.55,0.8) [draw, align=left] {\textbf{1}};
					\node () at (1.1,0.8) [draw, align=left] {\textbf{8}};
					\node () at (1.65,0.8) [draw, align=left] {\textbf{2}};
					\node () at (0,2.3) [draw=none, align=left,fill=white] {};	
					\node () at (0.2,-0.7) [draw=none, align=left,fill=white] {(c) {\em Prüfer sequence}};
				\end{tikzpicture}
				&
				\begin{tikzpicture}[very thick, every node/.style={rectangle, inner sep=0cm, minimum size=0.4cm,fill=lightgray}]
					\small
					\node (1) at (0,2) [draw] {\textbf{1}};
					\node (2) at (0,1) [draw] {\textbf{2}};
					\node (3) at (0,0) [draw] {\textbf{3}};
					\node (4) at (2,0.4) [draw] {\textbf{4}};
					\node (5) at (1,2) [draw] {\textbf{5}};
					\node (6) at (3,1.6) [draw] {\textbf{6}};
					\node (7) at (3,0.4) [draw] {\textbf{7}};
					\node (8) at (1,1) [draw] {\textbf{8}};	
					\node (9) at (2,1.6) [draw] {\textbf{9}};	
					
					\draw[-] (1) to (5);
					\draw[-] (2) to (8);
					\draw[-] (8) to (4);
					\draw[-] (8) to (9);
					\draw[-] (4) to (7);
					\draw[-] (6) to (9);
					\node () at (1.5,-0.7) [draw=none, align=left,fill=white] {(d) {\em Random forest} $F(9,3)$};
					
				\end{tikzpicture} &
				\begin{tikzpicture}[very thick, every node/.style={rectangle, inner sep=0cm, minimum size=0.4cm,fill=lightgray}]
					\small
					\node (1) at (0,0) [draw] {\textbf{1}};
					\node (2) at (1.8,0) [draw] {\textbf{2}};
					\node (3) at (0.9,1.85) [draw] {\textbf{3}};
					\node (4) at (3.8,0) [draw] {\textbf{4}};
					\node (5) at (-0.3,1.1) [draw] {\textbf{5}};
					\node (6) at (4.8,1.2) [draw] {\textbf{6}};
					\node (7) at (4.8,0) [draw] {\textbf{7}};
					\node (8) at (2.8,0.6) [draw] {\textbf{8}};	
					\node (9) at (3.8,1.2) [draw] {\textbf{9}};	
					
					\draw[-] (1) to (2);
					\draw[-] (1) to (3);
					\draw[-] (2) to (3);
					\draw[-] (1) to (5);
					\draw[-] (2) to (8);
					\draw[-] (8) to (4);
					\draw[-] (8) to (9);
					\draw[-] (4) to (7);
					\draw[-] (6) to (9);	
					
					\node () at (2.4,-0.7) [draw=none, align=left,fill=white] {(e) {\em Random complex part} $Q(C,q)$};
					
				\end{tikzpicture}
			\end{tabular}
		}
	\end{tabular}
	
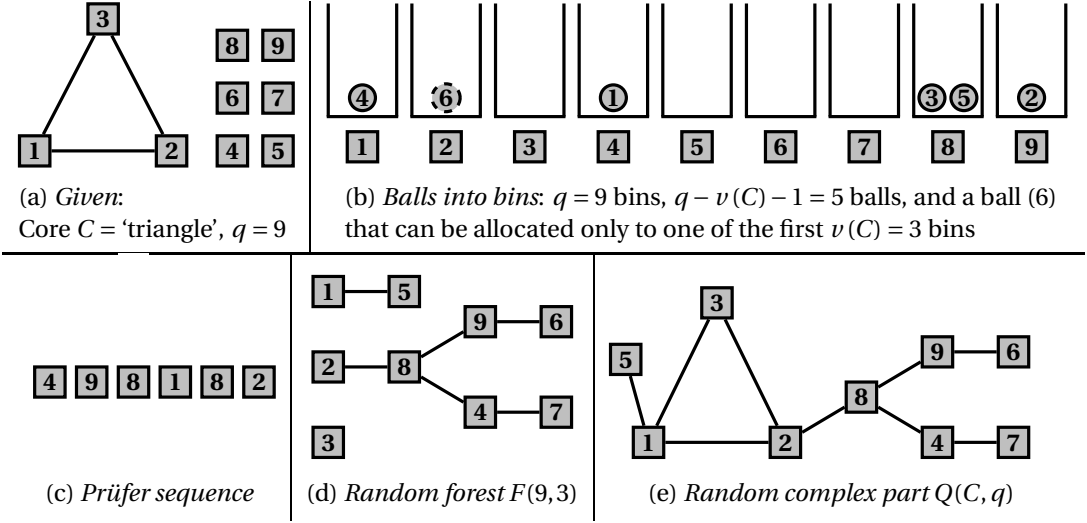
\captionof{figure}{Construction of the random complex part $\complexgraph(C,q)$: (a) Given a core $C$ and $q\in\N$, (b) a balls-into-bins experiment is translated (deterministically) to (c) a Prüfer sequence, (d) a random forest, and finally to (e) the random complex part $\complexgraph(C,q)$.}\label{fig:construction_complex_part}
\end{table}

\begin{thm}\label{thm:random_complex_part}
For each $n\in \N$, let $C=C(n)$ be a core and $q=q(n)\in \N$. In addition, let $\complexgraph=\complexgraph\left(C,q\right)\ur \complexclass\left(C,q\right)$ be a random complex part with core $C$ and vertex set $[q]$ as in \Cref{def:random_complex_part} and $\varepsilon>0$. If $\maxdegree{C}=\Th{1}$, then the following hold.
\begin{enumerate}
\item\label{thm:random_complex_part1}
\Whp\ $\maxdegree{Q}\lessorequal \specialconcentration{q}+\bigo{1}$.
\item\label{thm:random_complex_part2}
If in addition $\numberVertices{C}=\bigo{q^{1-\beta}}$ for some $\beta>0$, then \whp\ $\rounddown{\specialconcentration{q}-\varepsilon}+1\lessorequal \maxdegree{Q}\lessorequal \rounddown{\specialconcentration{q}+\varepsilon}+1$.
\end{enumerate}
\end{thm}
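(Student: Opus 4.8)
The plan is to use the representation of $\complexgraph=\complexgraph(C,q)$ via a random forest recalled in \Cref{sub:strategy_complex_part} and illustrated in \Cref{fig:construction_complex_part}: sample $\forest=\forest(q,\numberVertices{C})\ur\forestclass(q,\numberVertices{C})$ and attach to each vertex $v$ of $C$ the tree component of $\forest$ rooted at $v$. Recall that then $\degree{v}{\complexgraph}=\degree{v}{C}+\degree{v}{\forest}$ for the roots $v\in[\numberVertices{C}]$ and $\degree{v}{\complexgraph}=\degree{v}{\forest}$ for $v\in[q]\setminus[\numberVertices{C}]$; in particular $\edgeSet{\forest}\subseteq\edgeSet{\complexgraph}$, so deterministically $\maxdegree{\forest}\leq\maxdegree{\complexgraph}\leq\maxdegree{\forest}+\maxdegree{C}$.

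Part \ref{thm:random_complex_part1} is then immediate: by \Cref{thm:forest_max_degree}\ref{thm:forest_max_degree1} we have \whp\ $\maxdegree{\forest}\leq\rounddown{\specialconcentration{q}}+2$, and since $\maxdegree{C}=\Th{1}$ the deterministic upper bound above yields \whp\ $\maxdegree{\complexgraph}\leq\specialconcentration{q}+\bigo{1}$.

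For part \ref{thm:random_complex_part2}, I would apply \Cref{thm:forest_max_degree}\ref{thm:forest_max_degree2} with $n=q$ and $\ntrees=\numberVertices{C}=\bigo{q^{1-\beta}}$, obtaining \whp\ both
\begin{align*}
\rounddown{\specialconcentration{q}-\varepsilon}+1\leq\maxdegree{\forest}\leq\rounddown{\specialconcentration{q}+\varepsilon}+1
\end{align*}
and $\maxdegree{\forest}-\max_{r\in[\numberVertices{C}]}\degree{r}{\forest}=\smallomega{1}$. The lower bound in part \ref{thm:random_complex_part2} is immediate from $\maxdegree{\complexgraph}\geq\maxdegree{\forest}$. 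For the matching upper bound I split $\maxdegree{\complexgraph}$ into the maximum degree over non-roots, which is at most $\maxdegree{\forest}\leq\rounddown{\specialconcentration{q}+\varepsilon}+1$, and the maximum degree over roots, which is at most $\maxdegree{C}+\max_{r\in[\numberVertices{C}]}\degree{r}{\forest}=\Th{1}+\maxdegree{\forest}-\smallomega{1}$; the latter is eventually at most $\maxdegree{\forest}$, hence also at most $\rounddown{\specialconcentration{q}+\varepsilon}+1$. Thus \whp\ $\maxdegree{\complexgraph}\leq\rounddown{\specialconcentration{q}+\varepsilon}+1$, as claimed.

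There is no genuine obstacle here: all the probabilistic content has already been isolated in \Cref{thm:forest_max_degree}, and what remains is the bookkeeping above. The one point worth flagging is that the upper bound in part \ref{thm:random_complex_part2} really does need the second conclusion of \Cref{thm:forest_max_degree}\ref{thm:forest_max_degree2} --- that the heavy vertices of $\forest$ are not roots --- so that adding the bounded core degrees to a root cannot push its degree past $\maxdegree{\forest}$. This is exactly where the hypothesis $\numberVertices{C}=\bigo{q^{1-\beta}}$ enters, via \Cref{lem:max_load_subset}, and it explains why part \ref{thm:random_complex_part1}, which lacks that hypothesis, must accept the weaker bound $\specialconcentration{q}+\bigo{1}$.
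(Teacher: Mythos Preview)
Your proof is correct and follows the same approach as the paper: both use the forest representation $\complexgraph\leftrightarrow\forest(q,\numberVertices{C})$, the degree identity $\degree{v}{\complexgraph}=\degree{v}{\forest}+\mathbf{1}_{v\in\vertexSet{C}}\degree{v}{C}$, and \Cref{thm:forest_max_degree} for the probabilistic input. The paper's write-up is slightly terser, simply stating that the second conclusion of \Cref{thm:forest_max_degree}\ref{thm:forest_max_degree2} combined with $\maxdegree{C}=\Th{1}$ forces \whp\ $\maxdegree{\complexgraph}=\maxdegree{\forest}$, but your explicit split into roots and non-roots is exactly the unpacking of that sentence.
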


\begin{proof}
We observe that $Q$ can be obtained by choosing a random forest $\forest=\forest(q,\numberVertices{C})\ur\forestclass(q,\numberVertices{C})$ and then replacing each vertex $r$ in $C$ by the tree component of $\forest$ with root $r$. For a vertex $v\in \vertexSet{Q}$ we have
\begin{align}\label{eq:3}
	\degree{v}{Q}=
	\begin{cases}
		\degree{v}{C}+\degree{v}{F}& \text{if}~~ v \in \vertexSet{C} \\
		\degree{v}{F} & \text{ otherwise}.
	\end{cases}
\end{align}
Hence, we have $\maxdegree{Q}\lessorequal \maxdegree{C}+\maxdegree{\forest}$. By \Cref{thm:forest_max_degree}\ref{thm:forest_max_degree1} we get \whp\ $\maxdegree{\forest}\lessorequal \specialconcentration{q}+2$. Together with the fact $\maxdegree{C}=\Th{1}$ this yields statement \ref{thm:random_complex_part1}. For \ref{thm:random_complex_part2} we apply \Cref{thm:forest_max_degree}\ref{thm:forest_max_degree2} to $\forest$. Together with (\ref{eq:3}) and $\maxdegree{C}=\Th{1}$ this implies \whp\ $\maxdegree{Q}=\maxdegree{\forest}$. Thus, statement \ref{thm:random_complex_part2} follows by applying again \Cref{thm:forest_max_degree}\ref{thm:forest_max_degree2}.
\end{proof}

\section{Proofs of \Cref{thm:main_sub,thm:main} and \Cref{cor:maxdegree,cor:independent}}\label{sec:proof}
Throughout this section, let $\planargraph=\planargraph(n,m)\ur \planarclass(n,m)$ be the random planar graph.
\proofof{thm:main_sub}
In \Cref{thm:non_complex} we have seen that $\liminf_{n \to \infty} \prob{G(n,m)\text{ is planar}}>0$. Thus, each graph property that holds \whp\ in $G(n,m)$ is also true \whp\ in $\planargraph$ and the first statement follows by \Cref{thm:G_n_m_bins_balls}\ref{thm:G_n_m_bins_balls2}. By taking $\varepsilon=1/3$ we get the \lq in particular\rq\ statement. \qed

\proofof{thm:main}
We split $\planargraph$ into the large complex part $\Complexlargestcore=\complexlargestcore{\planargraph}$, the small complex part $\Complexrestcore=\complexrestcore{\planargraph}$, and the non-complex part $\Restcomplex=\restcomplex{\planargraph}$ as described in \Cref{sub:decomposition}. We claim that \whp\ the following hold.
\begin{enumerate}[label=(\roman*)]
\item\label{cl:1}
$\rounddown{\specialconcentration{\funcL}-\varepsilon}+1\lessorequal\maxdegree{\Complexlargestcore}\lessorequal\rounddown{\specialconcentration{\funcL}+\varepsilon}+1$;
\item\label{cl:2}
$\maxdegree{\Complexrestcore}\lessorequal\specialconcentration{\funcR^{2/3}}+\bigo{1}$;
\item\label{cl:3}
$\rounddown{\specialconcentration{\funcR}-\varepsilon}\lessorequal\maxdegree{\Restcomplex}\lessorequal\rounddown{\specialconcentration{\funcR}+\varepsilon}$.
\end{enumerate}
Assuming these three claims are true we can finish the proof as follows. By \Cref{thm:internal_structure}\ref{thm:internal_structure3} we have \whp\ $\Largestcomponent=\Complexlargestcore$ and therefore also \whp\ $\Rest=\Complexrestcore\cup \Restcomplex$. Thus, statement \ref{thm:main1} of \Cref{thm:main} follows by \ref{cl:1}. By \Cref{lem:nu}\ref{lem:nu1} we have $\specialconcentration{\funcR^{2/3}}=\left(2/3+\smallo{1}\right)\log \funcR/\log\log \funcR$ and $\specialconcentration{\funcR}=\left(1+\smallo{1}\right)\log \funcR/\log\log \funcR$. Combining that with \ref{cl:2} and \ref{cl:3} yields \whp\ $\maxdegree{\Complexrestcore\cup \Restcomplex}=\maxdegree{\Restcomplex}$ and therefore also \whp\ $\maxdegree{\Rest}=\maxdegree{\Restcomplex}$. Hence, statement \ref{thm:main2} of \Cref{thm:main} follows by \ref{cl:3}. Finally, we obtain the \lq in particular\rq\ statement by taking $\varepsilon=1/3$.

To prove the claims \ref{cl:1}--\ref{cl:3}, we will follow the strategy described in \Cref{sec:strategy}: We will construct a conditional random graph $\condGraph{\randomGraph}{\seq}$ which is distributed like the random graph $\tilde{\planargraph}$ introduced in \Cref{sub:strategy_decomposition}. Then we will determine the maximum degrees of the large complex part, small complex part and non-complex part of $\condGraph{\randomGraph}{\seq}$ (or equivalently of $\tilde{\planargraph}$). Finally, we will apply \Cref{lem:conditional_random_graphs} to translate these results to the random planar graph $\planargraph$. 

Let $\beta:=\min\left\{\delta/2,1/5\right\}$ and $\cl(n)$ be the subclass of $\planarclass(n,m)$ consisting of those graphs $H$ satisfying
\begin{align}
\maxdegreeLarge{\core{H}}&\equal \Th{1}, \label{eq:7}\\
\numberVerticesLarge{\largestcomponent{\core{H}}}&\equal\bigo{\funcL^{1-\beta}},\label{eq:8}\\
\numberVerticesLarge{\complexlargestcore{H}}&\equal\Th{\funcL},\label{eq:9}\\
\numberVerticesLarge{\complexrestcore{H}}&\equal\bigo{\funcR^{2/3}},\label{eq:10}
\\
\numberVerticesLarge{\restcomplex{H}}&\equal\Th{\funcR},\label{eq:11}
\\
\numberEdgesLarge{\restcomplex{H}}&\equal\numberVerticesLarge{\restcomplex{H}}/2+\bigo{\numberVerticesLarge{\restcomplex{H}}^{2/3}}.\label{eq:11b}
\end{align}
Due to \Cref{thm:internal_structure} we can choose the implicit hidden constants in the equations (\ref{eq:7})--(\ref{eq:11b}) such that $\planargraph\in\cl(n)$ with a probability of at least $1-\gamma/2$, for arbitrary $\gamma>0$. We will apply \Cref{lem:conditional_random_graphs} to the class $\cl:=\bigcup_{n\in\N}\cl(n)$. To that end, we define the function $\func$ such that for $H\in\cl$ we have
\begin{align*}
\func(H):=\big(\core{H}, \numberVertices{\complexlargestcore{H}}, \numberVertices{\complexrestcore{H}}\big).
\end{align*}
Let $\seq=\left(C_n, \lambda_n, \sigma_n\right)_{n\in\N}$ be a sequence that is feasible for $(\cl, \func)$ and let $\randomGraph=\randomGraph(n)\ur \cl(n)$. 
By definition the possible realisations of $\condGraph{\randomGraph}{\seq}$ are those graphs $H \in \cl$ with $\core{H}=C_n$, $\numberVertices{\complexlargestcore{H}}=\lambda_n$, and $\numberVertices{\complexrestcore{H}}=\sigma_n$. Hence, $\condGraph{\randomGraph}{\seq}=\complexlargestcore{\condGraph{\randomGraph}{\seq}}~\dot\cup~ \complexrestcore{\condGraph{\randomGraph}{\seq}}~\dot\cup~ \restcomplex{\condGraph{\randomGraph}{\seq}}$ can be constructed as follows. For $\complexlargestcore{\condGraph{\randomGraph}{\seq}}$ we choose uniformly at random a complex graph with $\lambda_n$ vertices and core $\largestcomponent{C_n}$ and for $\complexrestcore{\condGraph{\randomGraph}{\seq}}$ a complex graph with $\sigma_n$ vertices and core $\rest{C_n}$. For $\restcomplex{\condGraph{\randomGraph}{\seq}}$ we choose a graph without complex components having $u_n:=n-\lambda_n-\sigma_n$ vertices and $w_n:=m-\numberEdges{C_n}+\numberVertices{C_n}-\lambda_n-\sigma_n$ edges. Summing up, we have
\begin{align}
\complexlargestcoreLarge{\condGraph{\randomGraph}{\seq}} &\sim \complexgraph\big(\largestcomponent{C_n}, \lambda_n\big);\label{eq:14}\\
\complexrestcoreLarge{\condGraph{\randomGraph}{\seq}} &\sim \complexgraph\big(\rest{C_n}, \sigma_n\big);\label{eq:15}\\
\restcomplexLarge{\condGraph{\randomGraph}{\seq}}&\sim \nocomplex\big(u_n,w_n\big),\label{eq:16}
\end{align}
where the random complex parts and the random graph without complex components on the right hand side are as defined in \Cref{def:random_complex_part,def:nocomplex}, respectively.
Due to (\ref{eq:7}) and (\ref{eq:8}) we have $\maxdegree{C_n}=\Th{1}$ and $\numberVerticesLarge{\largestcomponent{C_n}}=\bigo{\lambda_n^{1-\beta}}$. Hence, we can apply \Cref{thm:random_complex_part}\ref{thm:random_complex_part2} to $\complexgraph\big(\largestcomponent{C_n}, \lambda_n\big)$. Together with (\ref{eq:14}) this implies \whp
\begin{align}\label{eq:4}
	\rounddown{\specialconcentration{\lambda_n}-\varepsilon/2}+1\lessorequal \maxdegreeLarge{\complexlargestcore{\condGraph{\randomGraph}{\seq}}}\lessorequal 	\rounddown{\specialconcentration{\lambda_n}+\varepsilon/2}+1.
\end{align}
Using \Cref{lem:nu}\ref{lem:nu2} we have $\specialconcentration{\lambda_n}=\specialconcentration{\funcL}+\smallo{1}$, as $\lambda_n=\Th{\funcL}$ by (\ref{eq:9}). Together with (\ref{eq:4}) this shows \whp
\begin{align}\label{eq:5}
	\rounddown{\specialconcentration{\funcL}-\varepsilon}+1\lessorequal \maxdegreeLarge{\complexlargestcore{\condGraph{\randomGraph}{\seq}}}\lessorequal 	\rounddown{\specialconcentration{\funcL}+\varepsilon}+1.
\end{align}
By \Cref{lem:conditional_random_graphs} inequality (\ref{eq:5}) is also \whp\ true if we replace $\condGraph{\randomGraph}{\seq}$ by $\randomGraph$. Combining it with the fact that $\planargraph \in \cl$ with probability at least $1-\gamma/2$ we obtain that with probability at least $1-\gamma$
\begin{align*}
	\rounddown{\specialconcentration{\funcL}-\varepsilon}+1\lessorequal \maxdegree{\Complexlargestcore}\lessorequal 	\rounddown{\specialconcentration{\funcL}+\varepsilon}+1
\end{align*}
for all $n$ large enough. As $\gamma>0$ was arbitrary, this shows claim \ref{cl:1}. 

Next, we prove claims \ref{cl:2} and \ref{cl:3} in a similar fashion. Combining (\ref{eq:15}) and \Cref{thm:random_complex_part}\ref{thm:random_complex_part1} yields $\maxdegreeLarge{\complexrestcore{\condGraph{\randomGraph}{\seq}}}\leq \specialconcentration{\sigma_n}+\bigo{1}$. Due to \Cref{lem:nu}\ref{lem:nu9} and \ref{lem:nu2} we have $\specialconcentration{\sigma_n}\leq\specialconcentration{\funcR^{2/3}}+o(1)$, where we used $\sigma_n=\bigo{\funcR^{2/3}}$ by (\ref{eq:10}). This yields $\maxdegreeLarge{\complexrestcore{\condGraph{\randomGraph}{\seq}}}\leq\specialconcentration{\funcR^{2/3}}+\bigo{1}$. Thus, claim \ref{cl:2} follows by \Cref{lem:conditional_random_graphs}. 
Due to (\ref{eq:11b}) we have $w_n=u_n/2+\bigo{u_n^{2/3}}$. Hence, we can combine (\ref{eq:16}) and \Cref{lem:random_non_complex} to obtain \whp
\begin{align}\label{eq:6}
\rounddown{\specialconcentration{u_n}-\varepsilon/2}\lessorequal \maxdegreeLarge{\restcomplex{\condGraph{\randomGraph}{\seq}}}\lessorequal \rounddown{\specialconcentration{u_n}+\varepsilon/2}.
\end{align}
Due to (\ref{eq:11}) we have $u_n=\Th{\funcR}$ and therefore, we obtain $\specialconcentration{u_n}=\specialconcentration{\funcR}+\smallo{1}$ by \Cref{lem:nu}\ref{lem:nu2}. Using that in (\ref{eq:6}) we get \whp
 \begin{align*}
 \rounddown{\specialconcentration{\funcR}-\varepsilon}\lessorequal \maxdegreeLarge{\restcomplex{\condGraph{\randomGraph}{\seq}}}\lessorequal \rounddown{\specialconcentration{\funcR}+\varepsilon}.
 \end{align*}
 Now claim \ref{cl:3} follows by \Cref{lem:conditional_random_graphs}. \qed

\proofof{cor:maxdegree}
We distinguish two cases. If $m$ is as in \Cref{thm:main_sub}, then \whp\ $\maxdegree{\planargraph}=\concentration{n}{2m}+\bigo{1}=\left(1+\smallo{1}\right)\log n/\log\log n$, where we used \Cref{thm:main_sub}, \Cref{lem:nu}\ref{lem:nu1}, and that $m=\Th{n}$. If $m$ is as in \Cref{thm:main}, then $\max\left\{\funcL(n),\funcR(n)\right\}=n$. Together with \Cref{lem:nu}\ref{lem:nu1} and \ref{lem:nu9} this implies $\max\left\{\specialconcentration{\funcL},\specialconcentration{\funcR}\right\}=\specialconcentration{n}=\left(1+\smallo{1}\right)\log n/\log\log n$. Combining that with the \lq in particular\rq\ statement of \Cref{thm:main} we obtain \whp\ $\maxdegree{\planargraph}=\left(1+\smallo{1}\right)\log n/\log\log n$, as desired.
\qed

\proofof{cor:independent}
The assertion follows directly from \Cref{thm:main} and the fact from \Cref{def:functionLR}(b) that $\funcL=n$ and $\funcR=n$. \qed

\section{Non-concentration of the maximum degree: Proof of \Cref{thm:main_dense}}\label{sec:planar_dense}
We recall that we denote by $\planarclass(n,m)$ the class of all vertex-labelled planar graphs on vertex set $[n]$ with $m$ edges. Furthermore, let $\planarclass_C(n,m)\subseteq\planarclass(n,m)$ be the subclass containing all {\em connected} planar graphs on vertex set $[n]$ with $m$ edges. Our starting point is the following result of Gim\'enez and Noy \cite{gim}.
\begin{thm}[\hspace{1sp}\cite{gim}]\label{thm:number_planar}
Let $\ratio \in (1,3)$ and $m=m(n)=\rounddown{\ratio n}$. Then there exist constants $\gamma, u>0$ and $c\geq c_1>0$ such that
\begin{align*}
\left|\planarclass(n,m)\right|&=\left(1+\smallo{1}\right)c n^{-4}\gamma^n u^m n!;\\
\left|\planarclass_C(n,m)\right|&=\left(1+\smallo{1}\right)c_1 n^{-4}\gamma^n u^m n!.
\end{align*}
\end{thm}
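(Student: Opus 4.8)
The plan is to reprove this by the classical decomposition of planar graphs into their $3$-connected components together with singularity analysis, following Gim\'enez and Noy. First I would introduce the bivariate exponential generating functions in which $x$ marks vertices and $y$ marks edges: $P(x,y)$ for all planar graphs, $C(x,y)$ for connected planar graphs, $B(x,y)$ for $2$-connected ones (those with at least two vertices), and $T(x,y)$ for $3$-connected ones. The first task is to assemble the standard tower of relations. Since a planar graph is a set of connected planar graphs, $P=\exp(C)$. Rooting a connected graph at a vertex and decomposing along its block--cut tree gives a composition scheme expressing $C^{\bullet}:=x\,\partial_x C$ in terms of $B$ (an identity of the form $C^{\bullet}=x\exp\big(B_x(C^{\bullet},y)\big)$ with $B_x=\partial_x B$). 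The network (SPQR-tree) decomposition of a $2$-connected graph into its $3$-connected components gives an algebraic relation between a suitable derivative of $B$ and $T$. Finally, by Whitney's theorem a $3$-connected planar graph embeds essentially uniquely in the sphere, so $T$ is read off from Tutte's explicit algebraic equation for rooted $3$-connected planar maps counted by vertices and edges. Composing these relations yields a system determining $C(x,y)$, hence also $P(x,y)=\exp(C(x,y))$.

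Next I would carry out the singularity analysis in $x$, with $y$ treated as a parameter ranging over the relevant interval. Tutte's enumeration of $3$-connected planar maps has a singularity of type $3/2$; this is inherited by $T$, then by $B(x,y)$ through the network decomposition, then by $C^{\bullet}=x\,\partial_x C$ through the block composition (both being regular, non-branch-point composition schemes), and a single integration in $x$ promotes it to type $5/2$ for $C$ itself, so that locally
\begin{align*}
C(x,y)=g_0(y)+g_2(y)\Big(1-\tfrac{x}{\rho(y)}\Big)^{2}+g_{5/2}(y)\Big(1-\tfrac{x}{\rho(y)}\Big)^{5/2}+\cdots,
\end{align*}
with $\rho(y)$ and the coefficients $g_j(y)$ analytic and $g_{5/2}(y)\neq0$ on that range. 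Since $C(\rho(y),y)<\infty$, the function $P(x,y)=\exp(C(x,y))$ has the same $5/2$-type singularity at the same point $\rho(y)$, and the transfer theorem gives, for fixed $y$, $[x^n]P(x,y)=(1+\smallo{1})\,\kappa(y)\,n^{-7/2}\rho(y)^{-n}$, with an analogous statement for $C$.

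Then I would pass from a fixed $y$ to the fixed edge count $m=\rounddown{\ratio n}$: solve the saddle-point (``edge-density'') equation $\ratio=-u\,\rho'(u)/\rho(u)$ for $u=u(\ratio)$, set $\gamma:=\rho(u)^{-1}$, and apply the saddle-point method to $[y^m]\big(\kappa(y)\,n^{-7/2}\rho(y)^{-n}\big)$; equivalently, one establishes a local central limit theorem for the number of edges of a $u$-weighted random planar graph on $[n]$, which concentrates around $\ratio n$ with fluctuations of order $\sqrt n$. This contributes an extra factor of order $n^{-1/2}$, yielding $[x^ny^m]P(x,y)=(1+\smallo{1})\,c\,n^{-4}\gamma^n u^{m}$ for an explicit $c>0$ (after the cosmetic substitution $u\mapsto1/u$ to match the form in the statement); multiplying by $n!$ gives the claimed asymptotics for $|\planarclass(n,m)|$. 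The connected count $|\planarclass_C(n,m)|$ comes out identically with $C$ in place of $P$, with the same $\gamma$ and $u$ and a constant $c_1>0$, and $c_1\le c$ since $\planarclass_C(n,m)\subseteq\planarclass(n,m)$.

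I expect the main obstacle to be the singularity analysis through the three-level decomposition: verifying that the $3$-connected singularity genuinely propagates to one of type exactly $5/2$ for $C$ (rather than being cancelled, or dominated by a smaller branch-point singularity introduced by the block or network schemes), and checking that $\rho(y)$ together with the singular coefficients $g_j(y)$ are analytic and non-degenerate uniformly as $\ratio$ ranges over $(1,3)$. Making the bivariate local limit law in the edge variable uniform enough to produce the sharp polynomial correction $n^{-4}$ and the strict positivity of the constants $c$ and $c_1$ is the other delicate point.
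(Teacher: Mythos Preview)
The paper does not prove this theorem at all: it is quoted verbatim from Gim\'enez and Noy \cite{gim} and used as a black box in the proof of \Cref{lem:isolated}. So there is no ``paper's own proof'' to compare against.

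That said, your outline is a faithful high-level summary of the Gim\'enez--Noy argument itself: the tower $T\to B\to C\to P$ via Whitney/Tutte, the network decomposition, and the block--cut composition; the propagation of a $3/2$-singularity from $T$ up to a $5/2$-singularity for $C$ and $P$ (yielding the $n^{-7/2}$ from transfer); and then a local limit/saddle-point step in the edge variable contributing an extra $n^{-1/2}$ to produce the $n^{-4}$. Your caveats are also the right ones: the genuinely hard part of \cite{gim} is exactly verifying non-criticality of the composition schemes (so that the singularity type is inherited rather than changed) and establishing analyticity and non-degeneracy of $\rho(y)$ and the singular coefficients uniformly for the relevant range of $y$, which is what makes the bivariate transfer and the local limit law go through. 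None of this is carried out in the present paper, nor is it expected to be.
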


Using \Cref{thm:number_planar} we can show that the probability that the dense random planar graph $\planargraph(n,m)$ has $\niv$ isolated vertices and $\nie$ isolated edges is bounded away from 0, for each fixed $\niv,\nie\in\N_0:=\N\cup\left\{0\right\}$.
\begin{lem}\label{lem:isolated}
Let $\planargraph=\planargraph(n,m)\ur \planarclass(n,m)$ and assume $m=m(n)=\rounddown{\ratio n}$ for $\ratio \in(1,3)$. Then we have for all fixed $\niv,\nie\in\N_0$ 
\begin{align*}
\liminf_{n\to \infty}~ \prob{P \text{ has }\niv \text{ isolated vertices and }\nie \text{ isolated edges}}>0.
\end{align*}
\end{lem}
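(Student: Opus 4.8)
The plan is to bound the probability from below by counting, with the help of \Cref{thm:number_planar}, those graphs in $\planarclass(n,m)$ that have \emph{exactly} $\niv$ isolated vertices and \emph{exactly} $\nie$ isolated edges; since this event is contained in the one in the statement, a positive $\liminf$ for it suffices. Set $n':=n-\niv-2\nie$ and $m':=m-\nie$. I would describe any such graph \emph{whose remaining part is connected} by the following data: a $\niv$-element set $A\subseteq[n]$ of the vertices to be isolated; a $2\nie$-element set $B\subseteq[n]\setminus A$ together with a perfect matching on $B$ forming the $\nie$ isolated edges; and a connected planar graph with $m'$ edges on the remaining $n'$ vertices. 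As $\niv$ and $\nie$ are fixed and $n\to\infty$, for $n$ large we have $n'\ge 3$ and $n'-1\le m'<3n'-6$ with $m'/n'\to\ratio$, so a connected planar graph with these parameters exists, is neither a single vertex nor a single edge, and has no isolated vertex; hence every such choice of data produces a graph in $\planarclass(n,m)$ with exactly $\niv$ isolated vertices and exactly $\nie$ isolated edges, and the data is recovered uniquely from the resulting graph. Restricting to a connected remaining part is exactly what prevents that part from contributing spurious isolated vertices or edges, and it costs only a constant factor. Counting the choices of data yields, for $n$ large,
\begin{align*}
\prob{\planargraph\text{ has exactly }\niv\text{ isolated vertices and }\nie\text{ isolated edges}}\ \ge\ \frac{\binom{n}{\niv}\binom{n-\niv}{2\nie}\,(2\nie-1)!!\cdot\left|\planarclass_C(n',m')\right|}{\left|\planarclass(n,m)\right|}.
\end{align*}

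The second step is to evaluate this bound asymptotically. Writing $\binom{n}{\niv}\binom{n-\niv}{2\nie}(2\nie-1)!!=n!/\left(\niv!\,2^{\nie}\,\nie!\,n'!\right)$ and applying \Cref{thm:number_planar} to both $\left|\planarclass_C(n',m')\right|$ and $\left|\planarclass(n,m)\right|$, all the factorials cancel, $(n/n')^{4}\to 1$, and the surviving powers $\gamma^{\,n'-n}=\gamma^{-(\niv+2\nie)}$ and $u^{\,m'-m}=u^{-\nie}$ are fixed positive constants. Hence the right-hand side above tends to
\begin{align*}
\frac{c_1}{c}\cdot\frac{\gamma^{-(\niv+2\nie)}\,u^{-\nie}}{\niv!\,2^{\nie}\,\nie!}\ >\ 0,
\end{align*}
which gives $\liminf_{n\to\infty}\prob{\planargraph\text{ has }\niv\text{ isolated vertices and }\nie\text{ isolated edges}}>0$, as desired.

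I do not expect a genuine obstacle here: the argument is bookkeeping on top of \Cref{thm:number_planar}. The two points that need a little care are (i) the injectivity of the above correspondence, which is the reason for insisting that the remaining part be connected; and (ii) the invocation of \Cref{thm:number_planar} for the perturbed pair $(n',m')$ rather than for $(n,\rounddown{\ratio n})$: here $m'=\rounddown{\ratio n}-\nie=\rounddown{\ratio n'}+\bigo{1}$, and this is harmless because changing the number of edges by $\bigo{1}$ multiplies the count only by a bounded factor (each unit change of $m$ introduces a factor $(1+\smallo{1})u$), while the polynomial factor $n^{-4}$ and the constants $c,c_1,\gamma$ are unaffected in the limit.
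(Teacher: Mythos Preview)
Your proof is correct and follows essentially the same approach as the paper: both attach the $\niv$ isolated vertices and $\nie$ isolated edges to a large connected planar graph and invoke \Cref{thm:number_planar} to compare counts. The only cosmetic difference is that the paper absorbs the $\bigo{1}$ edge discrepancy by choosing a fixed auxiliary graph $H$ with $\numberEdges{H}=\rounddown{\ratio\cdot\numberVertices{H}}$ (adjusting $H$ by one edge if needed) so that the theorem applies verbatim, whereas you take the minimal $H$ and appeal to the robustness of the asymptotic under $\bigo{1}$ changes in $m$; both are fine.
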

\begin{proof}
Throughout the proof, let $n$ be sufficiently large. Let $H$ be a fixed planar graph having $\niv$ isolated vertices and $\nie$ isolated edges and satisfying $\numberEdges{H}=\rounddown{\ratio\cdot \numberVertices{H}}$. Then we can construct \lq many\rq\ graphs in $\planarclass(n,m)$ with $\niv$ isolated vertices and $\nie$ isolated edges by adding a copy of $H$ to a connected graph $H'\in \planarclass_C(n-\numberVertices{H},m-\numberEdges{H})$. More precisely, we consider the following construction:
\begin{itemize}
\item Choose a subset $L\subseteq [n]$ of size $\numberVertices{H}$ and label the vertices of $H$ with $L$;
\item
Choose a graph $H'\in \planarclass_C(n-\numberVertices{H},m-\numberEdges{H})$, label the vertices with $[n]\setminus L$, and add the copy of $H$ to $H'$.
\end{itemize}
As these constructed graphs are all pairwise distinct, we obtain
\begin{align}\label{eq:29}
\prob{P \text{ has }\niv \text{ isolated vertices and }\nie \text{ isolated edges}}\geq \frac{\binom{n}{\numberVertices{H}}\left|\planarclass_C(n-\numberVertices{H},m-\numberEdges{H})\right|}{\left|\planarclass(n,m)\right|}.
\end{align}
We note that $\rounddown{\ratio \cdot\left(n-\numberVertices{H}\right)}$ is either $m-\numberEdges{H}$ or $m-\numberEdges{H}-1$. We assume $\rounddown{\ratio \cdot\left(n-\numberVertices{H}\right)}=m-\numberEdges{H}$, as the latter case can be done analogously by considering instead of $H$ a graph having $\numberVertices{H}$ vertices, $\numberEdges{H}+1$ edges, $k$ isolated vertices, and $l$ isolated edges. \Cref{thm:number_planar} implies
\begin{align}\label{eq:30}
\left|\planarclass_C(n-\numberVertices{H},m-\numberEdges{H})\right|=\Th{1}\left|\planarclass(n-\numberVertices{H},m-\numberEdges{H})\right|=\Th{1}n^{-\numberVertices{H}}\left|\planarclass(n,m)\right|.
\end{align}
Finally, the statement follows by using (\ref{eq:30}) and the fact 
$\binom{n}{\numberVertices{H}}=\Th{1}n^{\numberVertices{H}}$ in (\ref{eq:29}).
\end{proof}

We note that the formulas on the number of planar graphs in \Cref{thm:number_planar} are not true in the sparse regime where $m\leq n+\smallo{n}$. As a consequence, also \Cref{lem:isolated} does not hold in that case.

Next, we will show that we can locally change a graph so that the maximum degree increases by one, the number of isolated vertices by three, and the number of isolated edges decreases by two. Using it we can create graphs with many different maximum degrees. The following definition and lemma make this idea more precise.
\begin{definition}
For $n,m,\md \in \N$ and $\niv,\nie\in\N_0$ we let $\planarclass(n,m,\niv,\nie,\md)\subseteq\planarclass(n,m)$ be the subclass of all planar graphs on vertex set $[n]$ having $m$ edges, $\niv$ isolated vertices, $\nie$ isolated edges, and maximum degree $\md$.
\end{definition}

\begin{lem}\label{lem:increase_max_degree}
Let $n,m,\niv,\nie,\md\in\N$ with $\nie\geq 2$ and $\md\geq 3$. Then we have
\begin{align*}
\frac{\left|\planarclass(n,m,\niv+3,\nie-2,\md+1)\right|}{\left|\planarclass(n,m,\niv,\nie,\md)\right|}\geq \frac{1}{8k^3}.
\end{align*}
\end{lem}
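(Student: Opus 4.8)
The plan is to build a local \emph{surgery} that turns each $G\in\planarclass(n,m,\niv,\nie,\md)$ into a graph $\Phi(G)\in\planarclass(n,m,\niv+3,\nie-2,\md+1)$ whose maximum degree is larger by $1$, and then to bound the fibres of the resulting map $\Phi$: if every fibre $\Phi^{-1}(G'')$ has size at most $8\niv^3$, then $|\planarclass(n,m,\niv,\nie,\md)|=\sum_{G''}|\Phi^{-1}(G'')|\le 8\niv^3\,|\planarclass(n,m,\niv+3,\nie-2,\md+1)|$, which is exactly the claimed inequality.

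First I would define the surgery. Given $G\in\planarclass(n,m,\niv,\nie,\md)$ (so $\nie\ge2$ and $\md\ge3$), fix a plane embedding of $G$ and choose \emph{canonically} --- say, by smallest vertex label --- a vertex $v$ of maximum degree $\md$, a neighbour $y$ of $v$, two of the $\nie$ isolated edges $\{w,w'\}$ and $\{x,x'\}$, and one endpoint $w$ of the first of them. Obtain $G'$ by deleting the two isolated edges $\{w,w'\}$ and $\{x,x'\}$ and adding the two edges $vw$ and $wy$, placing the vertex $w$ inside a face of the embedding incident to the (already present) edge $vy$; this keeps the graph plane and makes $\{v,w,y\}$ a triangle. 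Since $\degree{w}{G}=1$ with unique neighbour $w'\notin\{v,y\}$, no loops or parallel edges arise, and one checks routinely that $G'$ has vertex set $[n]$, again $m$ edges (two deleted, two added), exactly $\niv+3$ isolated vertices (the old $\niv$, together with the now-isolated $w',x,x'$), exactly $\nie-2$ isolated edges, and maximum degree exactly $\md+1$, attained by $v$ --- here $\degree{w}{G'}=2$, $\degree{y}{G'}=\degree{y}{G}+1\le\md+1$, and every other degree is unchanged. Hence $\Phi(G):=G'\in\planarclass(n,m,\niv+3,\nie-2,\md+1)$, and $\Phi$ is a genuine (single-valued) map because all selections were canonical.

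Next I would bound $|\Phi^{-1}(G'')|$ for a fixed $G''$ in the larger class. Any preimage $G$ arises by undoing the surgery on $G''$: one locates the gadget triangle $\{v,w,y\}$, in which $w$ is a degree-$2$ vertex whose two neighbours are adjacent and one of which, $v$, has the maximum degree $\md+1\ge4$ (this is the step that uses the hypothesis $\md\ge3$, to exclude small degenerate configurations); then one deletes $vw,wy$ and re-forms two isolated edges using the freed vertex $w$ together with two further isolated vertices of $G''$. Because every selection in the construction of $\Phi$ was canonical, the triangle --- hence $v,w,y$ --- is determined by $G''$, so the only remaining freedom is which three of the $\niv+3$ isolated vertices of $G''$ play the roles of $w',x,x'$ and how they are grouped, which is $\bigo{\niv^3}$ and at most $8\niv^3$ possibilities; thus $|\Phi^{-1}(G'')|\le 8\niv^3$.

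I expect the main obstacle to be precisely this fibre bound: one has to argue that the modified region of the target graph can be recovered from the target graph alone, leaving only the $\bigo{\niv^3}$ ambiguity of re-pairing isolated vertices, so that $\Phi$ is at most $8\niv^3$-to-one uniformly in $n$, $m$ and $\md$ --- and it is this requirement that forces the rigid label-based choices in the definition of the surgery. Checking that the local modification preserves planarity and tracking the parameters $n,m,\niv,\nie,\md$ are routine.
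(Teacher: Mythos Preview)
Your surgery is essentially the same as the paper's (the paper attaches an isolated vertex $v_3$ to $v_1$ and a chosen neighbour $v_2$, whereas you recycle an endpoint $w$ of a deleted isolated edge; the effect on $(k,l,d)$ is identical), and the overall strategy of local modification plus counting is the same. The difference is in how the counting is organised, and that is where your argument has a genuine gap.

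You make $\Phi$ single-valued by fixing canonical label-based choices in the source graph $G$, and then assert that \emph{because} those choices were canonical the added triangle $\{v,w,y\}$ can be read off from $G''=\Phi(G)$ alone. This inference is invalid: canonicality guarantees that $\Phi$ is a function, but says nothing about recoverability of the surgery site from the image. Concretely, if $v$ already had in $G$ a degree-$2$ neighbour $u$ whose other neighbour is also adjacent to $v$, then in $G''$ both $w$ and $u$ are degree-$2$ vertices forming a triangle with $v$, and nothing in $G''$ distinguishes them. Likewise your $y$ was the smallest-label neighbour of $v$ \emph{in $G$}, but in $G''$ the neighbourhood of $v$ is $N_G(v)\cup\{w\}$, and if $w$ happens to carry a smaller label than $y$ you cannot recover $y$ by taking the minimum over $N_{G''}(v)$. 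So the triangle is not determined by $G''$, and the honest fibre bound picks up a factor of order $d$ (there can be up to $d+1$ candidates for $w$ among the neighbours of $v$). With that factor present the fibre is $\Theta(d\,k^3)$, not $8k^3$, and the desired inequality does not follow.

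The paper sidesteps this by \emph{not} making the forward map single-valued. It uses the relation $H\to H'$ allowing all choices of $v_1$, of the neighbour $v_2$, of the isolated vertex $v_3$, and of the two isolated edges; a fixed $H$ then has at least $d\cdot k\cdot\binom{l}{2}$ images. On the backward side a fixed $H'$ has at most $2(d{+}1)\cdot 3\binom{k+3}{4}$ preimages (at most two vertices of degree $d{+}1$; at most $d{+}1$ neighbours of $v_1$ that could be $v_3$; then the choices among isolated vertices). The factor $d$ appears on \emph{both} sides of the double count and cancels, leaving the clean $1/(8k^3)$. Your single-valued $\Phi$ discards the factor $d$ on the forward side but cannot legitimately avoid it on the backward side; replacing $\Phi$ by this multi-valued relation and double counting fixes the argument.
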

\begin{proof}
We consider the following operation that transforms a graph $H\in \planarclass(n,m,\niv,\nie,\md)$ to a graph $H' \in \planarclass(n,m,\niv+3,\nie-2,\md+1)$ (see also \Cref{fig:double_counting}). We pick in $H$ a vertex $v_1$ of degree $\md$, a neighbour $v_2$ of $v_1$, an isolated vertex $v_3$, and isolated edges $v_4v_5$, $v_6v_7$. Then we obtain $H'$ from $H$ by deleting the edges $v_4v_5$, $v_6v_7$ and adding $v_1v_3$ and $v_2v_3$. For two graphs $H\in \planarclass(n,m,\niv,\nie,\md)$ and $H' \in\planarclass(n,m,\niv+3,\nie-2,\md+1)$ we write $\transformation{H}{H'}$ if $H$ can be transformed to $H'$ via the above operation. For a fixed graph $H$ we have
\begin{align}\label{eq:31}
\left|\setbuilder{H'}{\transformation{H}{H'}}\right|\geq \md\niv\binom{\nie}{2}.
\end{align}
Next, we note that if we perform our operation $\transformation{H}{H'}$, then $H'$ satisfies the following properties:
\begin{itemize}
\item There are at most two vertices in $H'$ with degree $\md+1$;
\item The vertex $v_1$ has degree $\md+1$;
\item The vertex $v_3$ has exactly two neighbours, which are $v_1$ and $v_2$;
\item The vertices $v_4, v_5, v_6$ and $v_7$ are isolated.
\end{itemize}
Using these observations we can bound for a fixed $H'$ the number of graphs $H$ with $\transformation{H}{H'}$. There are at most two possible vertices in $H'$ which can be $v_1$ and knowing $v_1$ there are at most $\md+1$ options for the vertex $v_3$. Given $v_1$ and $v_3$ the vertex $v_2$ is already determined. Finally, for the vertices $v_4, v_5, v_6$ and $v_7$ there are $3\binom{k+3}{4}$ possibilities. Hence, we obtain
\begin{align}\label{eq:32}
\left|\setbuilder{H}{\transformation{H}{H'}}\right|\leq 2(\md+1)3\binom{k+3}{4}.
\end{align}
Combining (\ref{eq:31}) and (\ref{eq:32}) yields
\begin{align*}
\frac{\left|\planarclass(n,m,\niv+3,\nie-2,\md+1)\right|}{\left|\planarclass(n,m,\niv,\nie,\md)\right|}\geq \frac{\md\niv\binom{\nie}{2}}{2(\md+1)3\binom{k+3}{4}}\geq \frac{1}{8k^3},
\end{align*}
where we used $k/\binom{k+3}{4}\geq 1/k^3$ and $d/(d+1)\geq 3/4$, as $d\geq 3$. This shows the statement.
\end{proof}
\begin{figure}[t]
	\begin{tikzpicture}[scale=1, line width=0.38mm, every node/.style={circle,fill=lightgray, inner sep=0, minimum size=0.39cm}]
		\node (1) at (0,0) [draw,fill,line width=0.5mm] {\footnotesize $v_1$};
		\node (2) at (1,1) [draw] {};
		\node (3) at (-1,1) [draw,fill,line width=0.5mm] {\footnotesize $v_2$};
		\node (4) at (1,-1) [draw] {};
		\node (5) at (-1,-1) [draw] {};
		\node (6) at (-2,0) [draw] {};
		\node (7) at (2,-0.7) [draw,line width=0.5mm] {\footnotesize $v_4$};
		\node (8) at (2,0.7) [draw,line width=0.5mm] {\footnotesize $v_5$};
		\node (9) at (2.8,-0.7) [draw,line width=0.5mm] {\footnotesize $v_6$};
		\node (10) at (2.8,0.7) [draw,line width=0.5mm] {\footnotesize $v_7$};
		\node (11) at (3.6,-0.7) [draw] {};
		\node (12) at (3.6,0.7) [draw] {};
		\node (13) at (4.4,-0.7) [draw] {};
		\node (14) at (4.4,0.7) [draw,line width=0.5mm] {\footnotesize $v_3$};
		\draw[-] (1) to (2);
		\draw[-] (1) to (3);
		\draw[-] (1) to (4);
		\draw[-] (1) to (5);
		\draw[-] (2) to (3);
		\draw[-] (2) to (4);
		\draw[-] (3) to (6);
		\draw[-] (5) to (6);
		\draw[-] (7) to (8);
		\draw[-] (9) to (10);
		\draw[-] (11) to (12);
		
		\node (1a) at (8.5,0) [draw,line width=0.5mm] {\footnotesize $v_1$};
		\node (2a) at (9.5,1) [draw] {};
		\node (3a) at (7.5,1) [draw,line width=0.5mm] {\footnotesize $v_2$};
		\node (4a) at (9.5,-1) [draw] {};
		\node (5a) at (7.5,-1) [draw] {};
		\node (6a) at (6.5,0) [draw] {};
		\node (7a) at (10.5,-0.7) [draw,line width=0.5mm] {\footnotesize $v_4$};
		\node (8a) at (10.5,0.7) [draw,line width=0.5mm] {\footnotesize $v_5$};
		\node (9a) at (11.3,-0.7) [draw,line width=0.5mm] {\footnotesize $v_6$};
		\node (10a) at (11.3,0.7) [draw,line width=0.5mm] {\footnotesize $v_7$};
		\node (11a) at (12.1,-0.7) [draw] {};
		\node (12a) at (12.1,0.7) [draw] {};
		\node (13a) at (12.9,-0.7) [draw] {};
		\node (14a) at (7.5,0) [draw,line width=0.5mm] {\footnotesize $v_3$};
		\draw[-] (1a) to (2a);
		\draw[-] (1a) to (3a);
		\draw[-] (1a) to (4a);
		\draw[-] (1a) to (5a);
		\draw[-] (2a) to (3a);
		\draw[-] (2a) to (4a);
		\draw[-] (3a) to (6a);
		\draw[-] (5a) to (6a);
		\draw[-] (11a) to (12a);
		\draw[-] (1a) to (14a);
		\draw[-] (3a) to (14a);
		
		\node (0) at (5.42,0) [draw=none, fill=none] {\Huge $\rightarrow$};
	\end{tikzpicture}
	\caption{The operation used in the proof of \Cref{lem:increase_max_degree}: A graph $H\in \planarclass(n,m,\niv,\nie,\md)$ is transformed to a graph $H' \in \planarclass(n,m,\niv+3,\nie-2,\md+1)$.}
	\label{fig:double_counting}
\end{figure}
Finally, we can show \Cref{thm:main_dense} by using \Cref{lem:isolated} and applying \Cref{lem:increase_max_degree} repeatedly.
\proofof{thm:main_dense}
We assume to the contrary that there is a $A\in\N$ such that $\left|I\right|\leq A$ for infinitely many $n$. To simplify notation, we even assume that $\left|I\right|\leq A$ is true for all $n\in\N$. Otherwise, we could restrict our considerations to the subsequence consisting of all $n$ satisfying $\left|I\right|\leq A$. By \Cref{lem:isolated} there is a $\rho>0$ such that for all $n$ large enough
\begin{align*}
\sum_{d\in I}\left|\planarclass(n,m,1,2A,\md)\right|\geq \rho \left|\planarclass(n,m)\right|.
\end{align*}
In particular, we can choose $\md=\md(n)\in\N$ such that 
\begin{align*}
\left|\planarclass(n,m,1,2A,\md)\right|\geq \frac{\rho}{A} \left|\planarclass(n,m)\right|.
\end{align*}
Combining that with \Cref{lem:increase_max_degree} we get for all $i\leq A$
\begin{align*}
\left|\planarclass(n,m,1+3i,2A-2i,\md+i)\right|\geq \left(\frac{1}{8(3i-2)^3}\right)^i \left|\planarclass(n,m,1,2A,\md)\right|\geq \left(\frac{1}{8(3A)^3}\right)^A\frac{\rho}{A}\left|\planarclass(n,m)\right|.
\end{align*}
This implies that 
\begin{align*}
\prob{\maxdegree{P}=d+i}\geq \frac{\left|\planarclass(n,m,1+3i,2A-2i,\md+i)\right|}{\left|\planarclass(n,m)\right|}\geq \left(\frac{1}{8(3A)^3}\right)^A\frac{\rho}{A}.
\end{align*}
As $A$ and $\rho$ are fixed constants and \whp\ $\maxdegree{P}\in I$, this shows that for all $n$ large enough we have $d+i\in I$. Hence, we get $\left\{d, d+1, \ldots, d+A\right\}\subseteq I$, and therefore $|I|\geq A+1$, contradicting the fact $\left|I\right|\leq A$. This finishes the proof.
\qed

\section{Properties of $\concentration{\nbins}{\nballs}$}\label{sec:nu}
In this section we consider the function $\concentration{\nbins}{\nballs}$ introduced in \Cref{def:nu}. First we will show that $\concentration{\nbins}{\nballs}$ is well defined and then we will provide a proof of \Cref{lem:nu}.
\subsection{Well-definedness of $\concentration{\nbins}{\nballs}$}\label{sub:well_definedness}
We recall that for given $\nbins,\nballs\in \N$ we defined the function $f$ as
\begin{align}\label{eq:22}
	f(x)=f_{\nbins,\nballs}(x):=x\log\nballs+x-\left(x+1/2\right)\log x-(x-1)\log \nbins. 
\end{align}
By basic calculus we obtain $f(x)>0$ for all $x\in (0,1]$, $f''(x)<0$ for all $x\geq 1$, and $f(x)\to -\infty$ as $x\to \infty$. This implies that $f$ has a unique zero in $(0, \infty)$, which shows that $\concentration{\nbins}{\nballs}$ is well defined. 

Moreover, we obtain the following fact, which we will use in \Cref{sub:proof_nu}:
\begin{align}\label{eq:24}
	f(x)
	\begin{cases}
		>0 &\text{if } x<\concentration{\nbins}{\nballs},\\
		=0 &\text{if } x=\concentration{\nbins}{\nballs},\\
		<0 &\text{if } x>\concentration{\nbins}{\nballs}.
	\end{cases}
\end{align}
\subsection{Proof of \Cref{lem:nu}}\label{sub:proof_nu}
Let $f$ be as defined in (\ref{eq:22}) and let $\concentration{\nbins}{\nballs}$ be the unique positive zero of $f$. For $x\in(0,1]$ we have $f(x)>0$, which together with (\ref{eq:24}) implies \ref{lem:nu5}, i.e. $\concentration{\nbins}{\nballs}>1$. 

In order to prove \ref{lem:nu7}, we may assume that $\nballs\leq C\nbins^{1/3}$ for a suitable constant $C>0$. Now we get for $\nbins$ large enough
\begin{align*}
	f(5/3)\lessorequal -1/9\log n+5/3\log C+5/3-13/6\log\left(5/3\right)<0.
\end{align*}
Together with (\ref{eq:24}) this implies $\concentration{\nbins}{\nballs}\leq 5/3$ for all $\nbins$ large enough, which yields \ref{lem:nu7}.

For \ref{lem:nu1} we may assume $\nballs=\Th{\nbins}$. Then we have for $a>0$
\begin{align*}
	f\left(a\frac{\log n}{\log \log n}\right)=\left(1-a+\smallo{1}\right)\log n.
\end{align*}
Thus, \ref{lem:nu1} follows by (\ref{eq:24}). 

For \ref{lem:nu4} we fix $\nbins\in \N$ and define $K(x):=\left(1+1/(2x)\right)\log x+\left(1-1/x\right)\log n-1$. It is easy to check that $K(\concentration{\nbins}{\nballs})=\log \nballs$ and $K$ is strictly increasing. This implies \ref{lem:nu4}.

For \ref{lem:nu6} we observe that by definition of $\Concentration=\concentration{\nbins}{\nballs}$
\begin{align}\label{eq:2}
	1=e\frac{\nballs}{\nbins\Concentration}\exp\left(\left(\log \nbins-1/2\log \Concentration\right)/\Concentration\right).
\end{align}
Due to \ref{lem:nu1} and \ref{lem:nu4} we have $\Concentration=\smallo{\log n}$, which yields $\left(\log \nbins-1/2\log \Concentration\right)/\Concentration=\smallomega{1}$. Combining that with \eqref{eq:2} shows $\Concentration=\smallomega{\nballs/\nbins}$.

For \ref{lem:nu3} we let $\Concentration=\concentration{\nbins}{\nballs}$ and $\rho\in\R$. Due to \ref{lem:nu1} we have $\Concentration=\left(1+\smallo{1}\right)\log \nbins/\log\log \nbins$ and therefore
\begin{align}\label{eq:1}
	K\left(\Concentration+\rho\right)-K(\Concentration)=\frac{\left(\log\log n\right)^2}{\log n}\left(\rho +\smallo{1}\right).
\end{align}
On the other hand, we have 
\begin{align*}
	K\big(\concentration{\nbins}{\nballs+d}\big)-K\big(\concentration{\nbins}{\nballs}\big)=\log (\nballs+d)-\log\nballs=\Th{d/\nballs}=\smallo{\left(\log \log \nbins\right)^2/\log \nbins}.
\end{align*}
Together with (\ref{eq:1}) this implies \ref{lem:nu3}, as $K$ is strictly increasing.

Similarly, we define for \ref{lem:nu9} the function $g(x):=\left(x+1/2\right)\log x-x$. Now \ref{lem:nu9} follows by the facts that $g(\specialconcentration{\nbins})=\log\nbins$ and $g$ is strictly increasing. 

For \ref{lem:nu2}, let $C(x)=C_{n,k}(x):=\left(x+1/2\right)\log x+x\left(\log n-\log k-1\right)-\log n$ and $\rho \in \R$ be fixed. Then we have
\begin{align*}
	C(x+\rho)-C(x)=\rho\log(x+\rho)+\left(x+1/2\right)\log\left((x+\rho)/x\right)+\rho\left(\log n-\log k-1\right)=\rho\log(x+\rho)+\bigo{1}.
\end{align*}
Moreover, there is a constant $A>0$ independent of $n$ such that $C$ is strictly increasing for $x>A$. Due to \ref{lem:nu1} we have $\concentration{\nbins}{\nballs}=\smallomega{1}$ and $\concentration{c\nbins}{c\nballs}=\smallomega{1}$ and using the definition of $\Concentration$ we get $C\left(\concentration{c\nbins}{c\nballs}\right)-C\left(\concentration{\nbins}{\nballs}\right)=\log c-0=\bigo{1}$. This implies $\concentration{c\nbins}{c\nballs}=\concentration{\nbins}{\nballs}+\smallo{1}$.

Finally for \ref{lem:nu10}, let $x=\specialconcentration{n}$ and $y=\concentration{n}{cn}$. Using the definition of $\Concentration$ we have $0=x-(x+1/2)\log x+\log n=y\log c+y-(y+1/2)\log y+\log n$. Hence, we obtain
\begin{align}\label{eq:34}
	y-x=\frac{y\log c-\left(y+1/2\right)\log \left(y/x\right)}{\log x-1}.
\end{align}
By \ref{lem:nu1} we have $x,y=\left(1+\smallo{1}\right)\log n/\log\log n$ and $y/x=1+\smallo{1}$. Together with (\ref{eq:34}) this yields \ref{lem:nu10}.
\qed

\section{Discussion}\label{sec:discussion}
The only properties about the random planar graph $\planargraph=\planargraph(n,m)$ which we used in the proofs of our main results in the sparse regime (\Cref{thm:main_sub,thm:main} and \Cref{cor:maxdegree,cor:independent}) are the results on the internal structure in \Cref{thm:internal_structure}. Kang, Moßhammer, and Sprüssel \cite{surface} showed that \Cref{thm:internal_structure} is true for much more general classes of graphs. Prominent examples of such classes are cactus graphs, series-parallel graphs, and graphs embeddable on an orientable surface of genus $g\in \N\cup \{0\}$ (see \cite[Section 4]{cycles}). Using the generalised version of \Cref{thm:internal_structure} and analogous proofs of \Cref{thm:main_sub,thm:main} and \Cref{cor:maxdegree,cor:independent}, one can show the following.
\begin{thm}
\Cref{thm:main_sub,thm:main} and \Cref{cor:maxdegree,cor:independent} are true for the class of cactus graphs, the class of series-parallel graphs, and the class of graphs embeddable on an orientable surface of genus $g\in \N_0$.
\end{thm}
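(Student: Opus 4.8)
The plan is to observe that the proofs of \Cref{thm:main_sub,thm:main} — and hence of \Cref{cor:maxdegree,cor:independent}, which follow from them together with \Cref{lem:nu} and \Cref{def:functionLR} — invoke only three ingredients that are specific to the class $\planarclass$ of planar graphs: the structural results of \Cref{thm:internal_structure}; the decomposition identity of Remark~\ref{rem:connection_conditional}, namely that, conditioned on its core and on the orders of its complex parts, a uniform random member of $\planarclass(n,m)$ splits into independent uniform pieces $\complexgraph(\cdot,\cdot)$ and $\nocomplex(\cdot,\cdot)$; and the fact, contained in \Cref{thm:non_complex}, that $G(n,m)\in\planarclass(n,m)$ with probability bounded away from zero whenever $m\leq n/2+\bigo{n^{2/3}}$. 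Everything else entering those proofs — the balls-into-bins estimates of \Cref{sec:balls_bins}, the Prüfer-sequence bijection of \Cref{sub:pruefer}, \Cref{thm:random_complex_part}, \Cref{lem:random_non_complex}, \Cref{lem:conditional_random_graphs}, and the analytic properties of $\Concentration$ collected in \Cref{lem:nu} — is purely combinatorial or analytic and makes no reference to the graph class. Hence it suffices to establish these three ingredients for a general class $\mathcal{C}\in\{\text{cactus graphs, series-parallel graphs, graphs of genus }\leq g\}$; the proofs in \Cref{sec:proof} then transfer verbatim with $\planarclass$ replaced by $\mathcal{C}$.

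For the first ingredient I would cite the generalisation of \Cref{thm:internal_structure} established by Kang, Moßhammer and Sprüssel~\cite{surface}: it holds, with the \emph{same} auxiliary functions $\funcL,\funcR$ of \Cref{def:functionLR} and the same five critical regimes, for every class meeting the hypotheses of~\cite{surface}, and cactus graphs, series-parallel graphs, and graphs embeddable on an orientable surface of genus $g\in\N_0$ are among the examples listed in~\cite[Section~4]{cycles}. In particular the subclass $\cl(n)$ of \lq typical\rq\ graphs cut out by~(\ref{eq:7})--(\ref{eq:11b}) still captures all but a $\gamma/2$-fraction of the probability in $\mathcal{C}(n,m)$.

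For the second and third ingredients I would argue as follows. Every graph in $\nocomplexclass$ — each of whose components is a tree or unicyclic — lies in $\mathcal{C}$, since such a component has at most one cycle through any edge, no $K_4$-minor, and genus $0$; moreover $\mathcal{C}$ is closed under disjoint unions with members of $\nocomplexclass$ and under attaching rooted trees at vertices, the former using additivity of the genus over blocks in the surface case. Consequently, if $C$ is a core lying in $\mathcal{C}$ and $q\in\N$, then every complex graph with core $C$ and $q$ vertices again lies in $\mathcal{C}$, so $\complexclass(C,q)\subseteq\mathcal{C}$; and for a fixed $H\in\complexclass(C,q)$ the number of graphs $H'\in\mathcal{C}(n,m)$ with $\complexpart{H'}=H$ equals $|\nocomplexclass(u,w)|$, with $u,w$ as in Remark~\ref{rem:connection_conditional} and independently of $H$. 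This yields the analogue of Remark~\ref{rem:connection_conditional}, hence the distributional identities (\ref{eq:14})--(\ref{eq:16}) with the \emph{same} random objects $\complexgraph(\cdot,\cdot)$ and $\nocomplex(\cdot,\cdot)$, to which \Cref{thm:random_complex_part} and \Cref{lem:random_non_complex} then apply exactly as in \Cref{sec:proof} — the hypothesis $m=n/2+\bigo{n^{2/3}}$ required by \Cref{lem:random_non_complex} being guaranteed by the generalised \Cref{thm:internal_structure}\ref{thm:internal_structure6} and~\ref{thm:internal_structure7}. Finally, since $\nocomplexclass\subseteq\mathcal{C}$, \Cref{thm:non_complex} gives $\liminf_{n\to\infty}\prob{G(n,m)\in\mathcal{C}(n,m)}>0$ for $m\leq n/2+\bigo{n^{2/3}}$, so the proof of \Cref{thm:main_sub} (which derives the statement for $G(n,m)$ from \Cref{thm:G_n_m_bins_balls}\ref{thm:G_n_m_bins_balls2} and transfers it) carries over unchanged.

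The maximum-degree analysis itself is entirely class-independent once these three ingredients are in place, so the main obstacle is instead to make sure that nothing else about $\planarclass$ is silently used in \Cref{sec:proof} and, above all, to justify the closure statements of the previous paragraph in the exact form needed — in particular additivity of the genus over blocks for the surface case, which is what makes a complex graph over a core in $\mathcal{C}$ remain in $\mathcal{C}$ and, conversely, makes every non-complex extension of a given complex part stay in $\mathcal{C}$, so that a uniform random member of $\mathcal{C}(n,m)$ conditioned on its core and complex-part orders decomposes into genuinely \emph{independent} uniform pieces.
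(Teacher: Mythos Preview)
Your proposal is correct and follows the same route as the paper: observe that the proofs in \Cref{sec:proof} use nothing about planarity beyond the internal-structure results of \Cref{thm:internal_structure}, cite the generalisation of those results from~\cite{surface} (with the listed classes appearing in~\cite[Section~4]{cycles}), and conclude that the arguments transfer verbatim. You are in fact more explicit than the paper, which simply asserts that \Cref{thm:internal_structure} is the only class-dependent input; your additional verification that Remark~\ref{rem:connection_conditional} and the contiguity step behind \Cref{thm:main_sub} also carry over --- via the closure of each class under adding non-complex components and attaching trees --- fills in points the paper leaves implicit.
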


\Cref{thm:main} does not cover the whole regime $m=n+\smallo{n}$ and leaves a small gap of order $n^{\smallo{1}}$ to the dense case where $m=m(n)=\rounddown{\ratio n}$ for $\ratio \in(1,3)$. This leads to the following natural question on the behaviour of $\maxdegree{\planargraph}$ in the unconsidered region where $m=m(n)=n+t$ for a $t=t(n)>0$ such that $t=n^{1+\smallo{1}}$ and $t=\smallo{n}$.
\begin{question}
Let $\planargraph=\planargraph(n,m)\ur \planarclass(n,m)$ and $m=m(n)=n+t$ where $t=t(n)>0$ is such that $t=n^{1+\smallo{1}}$ and $t=\smallo{n}$. Is $\maxdegree{\planargraph}$ concentrated on a subset of $[n]$ with bounded size?
\end{question}

In \Cref{thm:main_dense} we saw that $\maxdegree{\planargraph}$ is not concentrated on any subset of $[n]$ with bounded size if $m=m(n)=\rounddown{\ratio n}$ for $\ratio \in(1,3)$. This raises the question how large a set $I=I(n)\subseteq[n]$ needs to be such that \whp\ $\maxdegree{\planargraph}\in I$ can hold. Furthermore, it would be interesting to know the precise asymptotic order of $\maxdegree{\planargraph}$ in that regime.
\begin{question}
Let $\planargraph=\planargraph(n,m)\ur \planarclass(n,m)$ and assume $m=m(n)=\rounddown{\ratio n}$ for $\ratio \in(1,3)$. What is the smallest size of a set $I=I(n)\subseteq[n]$ satisfying \whp\ $\maxdegree{\planargraph}\in I$? Moreover, what is the asymptotic order of $\maxdegree{\planargraph}$?
\end{question}

\section*{Acknowledgement}

The authors thank the anonymous referees for many helpful remarks to
improve the presentation of this paper.

\bibliographystyle{plain}
\bibliography{kang-missethan-max-degree}

\begin{thebibliography}{10}

\bibitem{probmethod}
N.~Alon and J.H. Spencer.
\newblock {\em The Probabilistic Method}.
\newblock Wiley Series in Discrete Mathematics and Optimization. Wiley, 3rd
  edition, 2011.

\bibitem{max_degree1}
B.~Bollob\'{a}s.
\newblock The distribution of the maximum degree of a random graph.
\newblock {\em Discrete Math.}, 32(2):201--203, 1980.

\bibitem{degree_sequence1}
B.~Bollob\'{a}s.
\newblock Degree sequences of random graphs.
\newblock {\em Discrete Math.}, 33(1):1--19, 1981.

\bibitem{vertices_given_degree}
B.~Bollob\'{a}s.
\newblock Vertices of given degree in a random graph.
\newblock {\em J. Graph Theory}, 6(2):147--155, 1982.

\bibitem{rg3}
B.~Bollob\'{a}s.
\newblock {\em Random Graphs}.
\newblock Cambridge University Press, 2nd edition, 2001.

\bibitem{uni}
V.~E. Britikov.
\newblock The structure of a random graph near a critical point.
\newblock {\em Diskret. Mat.}, 1(3):121--128, 1989.

\bibitem{CGS}
R.~Carr, W.~Goh, and E.~Schmutz.
\newblock The maximum degree in a random tree and related problems.
\newblock {\em Random Structures Algorithms}, 5(1):13--24, 1994.

\bibitem{chap}
G.~Chapuy, \'{E}. Fusy, O.~Gim\'{e}nez, B.~Mohar, and M.~Noy.
\newblock Asymptotic enumeration and limit laws for graphs of fixed genus.
\newblock {\em J. Combin. Theory Ser. A}, 118(3):748--777, 2011.

\bibitem{chap2}
G.~Chapuy, \'E. Fusy, O.~Gim\'enez, and M.~Noy.
\newblock On the diameter of random planar graphs.
\newblock {\em Combin. Probab. Comput.}, 24(1):145--178, 2015.

\bibitem{planar_map1}
G.~Collet, M.~Drmota, and L.~D. Klausner.
\newblock Limit laws of planar maps with prescribed vertex degrees.
\newblock {\em Combin. Probab. Comput.}, 28(4):519--541, 2019.

\bibitem{dks}
C.~Dowden, M.~Kang, and P.~Spr{\"u}ssel.
\newblock The evolution of random graphs on surfaces.
\newblock {\em SIAM J. Discrete Math.}, 32(1):695--727, 2018.

\bibitem{planar1}
M.~Drmota, O.~Gim\'{e}nez, and M.~Noy.
\newblock Degree distribution in random planar graphs.
\newblock {\em J. Combin. Theory Ser. A}, 118(7):2102--2130, 2011.

\bibitem{max_degree_vertex_model}
M.~Drmota, O.~Giménez, M.~Noy, K.~Panagiotou, and A.~Steger.
\newblock The maximum degree of random planar graphs.
\newblock {\em Proc. Lond. Math. Soc. (3)}, 109(4):892--920, 2014.

\bibitem{planar_map3}
M.~Drmota, M.~Noy, and B.~Stufler.
\newblock {Cut Vertices in Random Planar Maps}.
\newblock In {\em 31st International Conference on Probabilistic, Combinatorial
  and Asymptotic Methods for the Analysis of Algorithms (AofA 2020)}, volume
  159 of {\em Leibniz International Proceedings in Informatics (LIPIcs)}, pages
  10:1--10:18, 2020.

\bibitem{planar_map2}
M.~Drmota and B.~Stufler.
\newblock Pattern occurrences in random planar maps.
\newblock {\em Statist. Probab. Lett.}, 158:108666, 2020.

\bibitem{erdoes1}
P.~Erd\H{o}s and A.~R\'{e}nyi.
\newblock On random graphs. {I}.
\newblock {\em Publ. Math. Debrecen}, 6:290--297, 1959.

\bibitem{erdoes2}
P.~Erd\H{o}s and A.~R\'{e}nyi.
\newblock On the evolution of random graphs.
\newblock {\em Magyar Tud. Akad. Mat. Kutat\'{o} Int. K\"{o}zl.}, 5:17--61,
  1960.

\bibitem{max_degree3}
P.~Erd\H{o}s and R.~Wilson.
\newblock On the chromatic index of almost all graphs.
\newblock {\em J. Combin. Theory Ser. B}, 23(2-3):255--257, 1977.

\bibitem{planar5}
N.~Fountoulakis and K.~Panagiotou.
\newblock 3-connected cores in random planar graphs.
\newblock {\em Combin. Probab. Comput.}, 20(3):381--412, 2011.

\bibitem{rg1}
A.~Frieze and M.~Karoński.
\newblock {\em Introduction to Random Graphs}.
\newblock Cambridge University Press, 2015.

\bibitem{planar2}
\'{E}. Fusy.
\newblock Uniform random sampling of planar graphs in linear time.
\newblock {\em Random Structures Algorithms}, 35(4):464--522, 2009.

\bibitem{planar8}
S.~Gerke and C.~McDiarmid.
\newblock On the number of edges in random planar graphs.
\newblock {\em Combin. Probab. Comput.}, 13(2):165--183, 2004.

\bibitem{planar7}
S.~Gerke, C.~McDiarmid, A.~Steger, and A.~Wei{\ss}l.
\newblock Random planar graphs with {$n$} nodes and a fixed number of edges.
\newblock In {\em Proceedings of the {S}ixteenth {A}nnual {ACM}-{SIAM}
  {S}ymposium on {D}iscrete {A}lgorithms}, pages 999--1007. ACM, New York,
  2005.

\bibitem{planar10}
S.~Gerke, C.~McDiarmid, A.~Steger, and A.~Wei{\ss}l.
\newblock Random planar graphs with given average degree.
\newblock In {\em Combinatorics, complexity, and chance}, volume~34 of {\em
  Oxford Lecture Ser. Math. Appl.}, pages 83--102. Oxford Univ. Press, Oxford,
  2007.

\bibitem{planar9}
S.~Gerke, D.~Schlatter, A.~Steger, and A.~Taraz.
\newblock The random planar graph process.
\newblock {\em Random Structures Algorithms}, 32(2):236--261, 2008.

\bibitem{gim}
O.~Gim\'enez and M.~Noy.
\newblock Asymptotic enumeration and limit laws of planar graphs.
\newblock {\em J. Amer. Math. Soc.}, 22(2):309--329, 2009.

\bibitem{gonnet}
G.~H. Gonnet.
\newblock Expected length of the longest probe sequence in hash code searching.
\newblock {\em J. Assoc. Comput. Mach.}, 28(2):289--304, 1981.

\bibitem{degree_sequence3}
G.~I. Iv\v{c}enko.
\newblock The asymptotic behavior of the degrees of vertices in a random graph.
\newblock {\em Teor. Verojatnost. i Primenen.}, 18:195--203, 1973.

\bibitem{birth_giant}
S.~Janson, D.~E. Knuth, T.~{\L}uczak, and B.~Pittel.
\newblock The birth of the giant component.
\newblock {\em Random Structures Algorithms}, 4(3):231--358, 1993.

\bibitem{rg2}
S.~Janson, T.~{\L}uczak, and A.~Ruci\'{n}ski.
\newblock {\em Random Graphs}.
\newblock Wiley, 2000.

\bibitem{johnson_kotz}
N.~L. Johnson and S.~Kotz.
\newblock {\em Urn models and their application: an approach to modern discrete
  probability theory}.
\newblock Wiley, 1977.

\bibitem{planar}
M.~{Kang} and T.~{{\L}uczak}.
\newblock {Two critical periods in the evolution of random planar graphs}.
\newblock {\em Trans. Amer. Math. Soc.}, 364(8):4239--4265, 2012.

\bibitem{cycles}
M.~Kang and M.~Missethan.
\newblock Longest and shortest cycles in random planar graphs.
\newblock {\em arXiv:2006.09697}.

\bibitem{surface}
M.~Kang, M.~Moßhammer, and P.~Spr\"{u}ssel.
\newblock Phase transitions in graphs on orientable surfaces.
\newblock {\em Random Structures Algorithms}, 56(4):1117--1170, 2020.

\bibitem{luczak_pittel_wierman}
T.~{\L}uczak, B.~Pittel, and J.~C. Wierman.
\newblock The structure of a random graph at the point of the phase transition.
\newblock {\em Trans. Amer. Math. Soc.}, 341(2):721--748, 1994.

\bibitem{book_pruefer2}
J.~Matoušek and J.~Nešetřil.
\newblock {\em Invitation to Discrete Mathematics}.
\newblock Oxford University Press, 2nd edition, 2009.

\bibitem{mcd}
C.~McDiarmid.
\newblock Random graphs on surfaces.
\newblock {\em J. Combin. Theory Ser. B}, 98(4):778--797, 2008.

\bibitem{maxdegree_planar1}
C.~McDiarmid and B.~Reed.
\newblock On the maximum degree of a random planar graph.
\newblock {\em Combin. Probab. Comput.}, 17(4):591--601, 2008.

\bibitem{msw}
C.~McDiarmid, A.~Steger, and D.~Welsh.
\newblock Random planar graphs.
\newblock {\em J. Combin. Theory Ser. B}, 93(2):187--205, 2005.

\bibitem{degree_sequence2}
B.~McKay and N.~Wormald.
\newblock The degree sequence of a random graph. {I}. {T}he models.
\newblock {\em Random Structures Algorithms}, 11(2):97--117, 1997.

\bibitem{mitzenmacher_upfal}
M.~Mitzenmacher and E.~Upfal.
\newblock {\em Probability and Computing: Randomization and Probabilistic
  Techniques in Algorithms and Data Analysis}.
\newblock Cambridge University Press, 2nd edition, 2017.

\bibitem{moon}
J.~W. Moon.
\newblock On the maximum degree in a random tree.
\newblock {\em Michigan Math. J.}, 15(4):429--432, 1968.

\bibitem{prob_planarity}
M.~Noy, V.~Ravelomanana, and J.~Ru\'{e}.
\newblock On the probability of planarity of a random graph near the critical
  point.
\newblock {\em Proc. Amer. Math. Soc.}, 143(3):925--936, 2015.

\bibitem{planar3}
M.~Noy, C.~Requil\'{e}, and J.~Ru\'{e}.
\newblock Further results on random cubic planar graphs.
\newblock {\em Random Structures Algorithms}, 56(3):892--924, 2020.

\bibitem{planar6}
K.~Panagiotou and A.~Steger.
\newblock Maximal biconnected subgraphs of random planar graphs.
\newblock {\em ACM Trans. Algorithms}, 6(2):Art. 31, 21, 2010.

\bibitem{planar4}
K.~Panagiotou and A.~Steger.
\newblock On the degree distribution of random planar graphs.
\newblock In {\em Proceedings of the {T}wenty-{S}econd {A}nnual {ACM}-{SIAM}
  {S}ymposium on {D}iscrete {A}lgorithms}, pages 1198--1210. SIAM,
  Philadelphia, PA, 2011.

\bibitem{balls_bins1}
M.~Raab and A.~Steger.
\newblock ``{Balls} into bins'' -- a simple and tight analysis.
\newblock In {\em Randomization and Approximation Techniques in Computer
  Science}, pages 159--170. Springer Berlin, 1998.

\bibitem{max_degree2}
O.~Riordan and A.~Selby.
\newblock The maximum degree of a random graph.
\newblock {\em Combin. Probab. Comput.}, 9(6):549--572, 2000.

\bibitem{Asymptopia}
J.~Spencer.
\newblock {\em Asymptopia}, volume~71 of {\em Student Mathematical Library}.
\newblock American Mathematical Society, Providence, RI, 2014.
\newblock With L. Florescu.

\bibitem{book_pruefer}
J.H. van Lint and R.M. Wilson.
\newblock {\em A Course in Combinatorics}.
\newblock Cambridge University Press, 2nd edition, 2001.

\end{thebibliography}

\newpage
\appendix
\section{Sketch of the proof of \Cref{pro:ER}}\label{sec:proof_er}
Due to \Cref{thm:G_n_m_bins_balls}\ref{thm:G_n_m_bins_balls2} we have \whp\ 
\begin{align}\label{eq:35}
\maxdegree{G_i}=\concentration{n}{d_in}+\bigo{1}.
\end{align}
Together with \Cref{lem:nu}\ref{lem:nu10} this shows \whp\ $0<\maxdegree{G_2}-\maxdegree{G_1}=\Th{\log n/\left(\log \log n\right)^2}$. The so-called symmetry rule (see e.g. \cite[Section 5.6]{rg2}) says that $\Rest(G_i)$ \lq behaves\rq\ asymptotically like $G\left(n_i', m_i'\right)$ for suitable $n_i'=n_i'(n)=\Th{n}$ and $m_i'=m_i'(n)$ such that $2m_i'/n_i'$ tends to a constant $c_i'\in (0,1)$ with $c_1'>c_2'$. Together with \Cref{thm:G_n_m_bins_balls}\ref{thm:G_n_m_bins_balls2} this implies that \whp\
\begin{align}\label{eq:36}
\maxdegree{\rest{G_i}}=\concentration{n_i'}{2m_i'}+\bigo{1}=\concentration{n}{c_i'n}+\smallo{\log n/\left(\log \log n\right)^2},
\end{align}
where we used in the last equality \Cref{lem:nu}\ref{lem:nu2} and \ref{lem:nu10}. Using (\ref{eq:36}), \Cref{lem:nu}\ref{lem:nu10}, and the fact $c_1'>c_2'$, we obtain \whp\ $0<\maxdegree{\rest{G_1}}-\maxdegree{\rest{G_2}}=\Th{\log n/\left(\log \log n\right)^2}$. Combining \Cref{lem:nu}\ref{lem:nu10}, (\ref{eq:35}), and (\ref{eq:36}) yields that \whp\ $\maxdegree{\largestcomponent{G_i}}=\maxdegree{G_i}=\concentration{n}{d_in}+\bigo{1}$. Hence, we get \whp\ $0<\maxdegree{\largestcomponent{G_2}}-\maxdegree{\largestcomponent{G_1}}=\Th{\log n/\left(\log \log n\right)^2}$ and $0<\maxdegree{\largestcomponent{G_1}}-\maxdegree{\rest{G_1}}=\Th{\log n/\left(\log \log n\right)^2}$. \qed

\section{Proof of \Cref{thm:pruefer}}\label{sec:proof_pruefer}
We start by proving (\ref{eq:12}). To that end, let $r\in[\ntrees]$ be a root vertex. Throughout the construction of $\prueferseqence(\forest)$ the root $r$ is always the vertex with smallest label in the component of $\forest_{i}$ containing $r$. This implies $r\neq y_i$ for all $i\in[n-\ntrees]$. As the elements of the sequence $\mathbf{y}=\left(y_1, \ldots, y_{n-\ntrees}\right)$ are all distinct, we obtain 
\begin{align}\label{eq:21}
	\frequency{v}{\mathbf{y}}=
	\begin{cases}
		0 & \text{~~if~~} v\in [\ntrees]
		\\
		1 & \text{~~if~~} v\in[n]\setminus [\ntrees].
	\end{cases}
\end{align}
This proves (\ref{eq:12}), since $\degree{v}{\forest}=\frequency{v}{\mathbf{x}}+\frequency{v}{\mathbf{y}}$.

Next, we provide an algorithm that builds a graph $\prueferinvers(\mathbf{w})$ for each $\mathbf{w}\in \sequences{n}{\ntrees}$. Later we will see that the algorithm indeed reconstructs $\forest\in\forestclass(n,\ntrees)$ if the input is $\mathbf{w}=\prueferseqence(\forest)$. Let $\mathbf{w}=\left(w_1, \ldots, w_{n-\ntrees}\right)\in \sequences{n}{\ntrees}$ be given. We construct sequences $(\tilde{x}_1, \ldots, \tilde{x}_{n-\ntrees})$ and $(\tilde{y}_1, \ldots, \tilde{y}_{n-\ntrees})$ of vertices, a sequence $\left(\tilde{\forest}_0, \ldots, \tilde{\forest}_{n-\ntrees}\right)$ of forests and for each $v\in[n]$ a sequence $(\tilde{d}_0(v), \ldots, \tilde{d}_{n-\ntrees}(v))$ of degrees as follows. We start with $\vertexSet{\tilde{\forest}_0}=[n]$, $\edgeSet{\tilde{\forest}_0}=\emptyset$, $\tilde{d}_0(v)=\frequency{v}{\mathbf{w}}$ if $v\in [\ntrees]$, and $\tilde{d}_0(v)=\frequency{v}{\mathbf{w}}+1$ if $v\in [n]\setminus[\ntrees]$. For $i\in[n-\ntrees]$ we set $\tilde{x}_i=w_i$ and $\tilde{y}_i=\max\setbuilder{v}{\tilde{d}_{i-1}(v)=1}$. In addition, we let $\tilde{d}_i(v)=\tilde{d}_{i-1}(v)-1$ if $v\in\{\tilde{x}_i, \tilde{y}_i\}$ and $\tilde{d}_i(v)=\tilde{d}_{i-1}(v)$ otherwise. Finally, we obtain $\tilde{\forest}_i$ by adding the edge $\tilde{x}_i\tilde{y}_i$ in $\tilde{\forest}_{i-1}$ and we set $\prueferinvers(\mathbf{w}):=\tilde{\forest}_{n-\ntrees}$. Next, we show that this algorithm is well defined and that the output is indeed a graph. To that end, we note that for $v\in[n]\setminus[\ntrees]$ and $i\in[n-\ntrees]$ we have
\begin{align*}
	\big(\tilde{d}_{i-1}(v)\geq 1, \tilde{d}_i(v)=0\big)~ \implies~ \big(\tilde{y}_i=v\big).
\end{align*}
This yields that there are at least $(n-\ntrees-i)$ vertices $v\in[n]\setminus[\ntrees]$ with $\tilde{d}_i(v)\geq 1$. Thus, if $\tilde{d}_{i-1}(w_{n-\ntrees})\geq 1$ for some $i\in[n-\ntrees]$, then $\sum_{v\in [n]\setminus [\ntrees]}\tilde{d}_{i-1}(v)\leq 2(n-\ntrees-i)+1$ and therefore $\tilde{y}_i\in[n]\setminus[\ntrees]$. This yields $\tilde{d}_{i}(w_{n-\ntrees})\geq 1$ unless $i=n-\ntrees$. As $\tilde{d}_{0}(w_{n-\ntrees})\geq 1$ we obtain by induction that $\tilde{y}_i\in[n]\setminus[\ntrees]$ for all $i\in[n-\ntrees]$. In particular, this shows that $\tilde{y}_i$ is well defined and $\tilde{x}_i\neq \tilde{y}_i$. Thus, the algorithm is always executable and $\tilde{\forest}_{i}$ is a graph for all $i\in[n-\ntrees]$.

In order to prove that $\prueferseqence:\forestclass(n,\ntrees)\to \sequences{n}{\ntrees}$ is a bijection, it suffices to show the following claims.
\begin{enumerate}[label=(\roman*)]
	\item \label{cl:pruefer1}
	$\prueferseqence(\forest)\in \sequences{n}{\ntrees}$ for all $\forest\in \forestclass(n, \ntrees)$;
	\item \label{cl:pruefer2} $\prueferinvers(\prueferseqence(\forest))=\forest$ for all $\forest\in \forestclass(n, \ntrees)$;
	\item \label{cl:pruefer3}
	$\prueferinvers(\mathbf{w})\in\forestclass\left(n, \ntrees\right)$ for all $\mathbf{w}\in \sequences{n}{\ntrees}$;
	\item \label{cl:pruefer4}
	$\prueferseqence\left(\prueferinvers\left(\mathbf{w}\right)\right)=\mathbf{w}$ for all $\mathbf{w}\in \sequences{n}{\ntrees}$.
\end{enumerate}
We observe that $x_{n-\ntrees}\notin \left\{y_1, \ldots, y_{n-\ntrees}\right\}$. Thus, using (\ref{eq:21}) yields $x_{n-\ntrees}\in[\ntrees]$, which implies \ref{cl:pruefer1}.

To show \ref{cl:pruefer2}, we suppose that we first apply the algorithm to obtain $\prueferseqence(\forest)$ and then the algorithm $\prueferinvers$ with input $\mathbf{w}=\prueferseqence(\forest)$. Due to (\ref{eq:12}) the degree sequence of $\forest_0=\forest$ equals $\left(\tilde{d}_0(1), \ldots, \tilde{d}_0(n)\right)$ and therefore $\tilde{y}_1=y_1$. By construction we also have $\tilde{x}_1=x_1$, which implies that $\left(\tilde{d}_1(1), \ldots, \tilde{d}_1(n)\right)$ is the degree sequence of $\forest_1$. By repeating that argument we obtain by induction $\tilde{y}_i=y_i$ for all $i\in[n-\ntrees]$. As $\edgeSet{\forest}=\setbuilder{x_iy_i}{i\in[n-\ntrees]}$ and $\edgeSet{\tilde{\forest}_{n-\ntrees}}=\setbuilder{\tilde{x}_i\tilde{y}_i}{i\in[n-\ntrees]}$ this shows $\tilde{\forest}_{n-\ntrees}=\forest$, i.e. $\prueferinvers(\prueferseqence(\forest))=\forest$.

For \ref{cl:pruefer3} we assume that we apply the algorithm $\prueferinvers$ with input $\mathbf{w}\in \sequences{n}{\ntrees}$. By induction it follows that for all $i\in\left\{0, \ldots, n-\ntrees\right\}$ each component of $\tilde{\forest}_i$ contains at most one vertex $v$ with $\tilde{d}_i(v)>0$. This implies that we never close a cycle when adding the edge $\tilde{x}_{i+1}\tilde{y}_{i+1}$ in $\tilde{\forest}_i$, which shows that $\prueferinvers(\mathbf{w})$ is a forest. We saw before that $\tilde{y}_i\in[n]\setminus[\ntrees]$ for all $i\in[n-\ntrees]$. Thus, if $r\in[\ntrees]$ is a root and the component of $\tilde{\forest}_i$ containing $r$ has a vertex $v$ with $\tilde{d}_i(v)>0$, then $v=r$. This implies that adding the edge $\tilde{x}_{i+1}\tilde{y}_{i+1}$ never connects two components of $\tilde{\forest}_i$ which contain both a root. Hence, $\prueferinvers(\mathbf{w})\in \forestclass(n,\ntrees)$. 

Finally for \ref{cl:pruefer4}, we suppose that for given $\mathbf{w}\in \sequences{n}{\ntrees}$ we first apply the algorithm to construct $\prueferinvers(\mathbf{w})$ and then the algorithm to obtain the Prüfer sequence of $\prueferinvers(\mathbf{w})$. We note that the degree sequence of $\forest_0=\prueferinvers(\mathbf{w})$ equals $\left(\tilde{d}_0(1), \ldots, \tilde{d}_0(n)\right)$ and therefore $y_1=\tilde{y}_1$. By construction $\tilde{x}_1$ is the unique neighbour of $\tilde{y}_1$ in $\forest_0$, which implies $x_1=\tilde{x}_1$. This yields that the degree sequence of $\forest_1$ is $\left(\tilde{d}_1(1), \ldots, \tilde{d}_1(n)\right)$. Repeating that argument we obtain by induction $\tilde{x}_i=x_i$ for all $i\in[n-\ntrees]$, which proves \ref{cl:pruefer4}. \qed

\end{document}